\newtheorem{theorem}{Theorem}
\newtheorem{lemma}{Lemma}
\newtheorem{assumption}{Assumption}
\newtheorem{remark}{Remark}
\newtheorem{definition}{Definition}
\newcommand{\tr}{^{\sf T}}
\begin{document}
\title{Convergence Analysis of A Proximal Linearized ADMM Algorithm  for Nonconvex Nonsmooth Optimization
 }

\author{Maryam Yashtini }
\author{
 Maryam Yashtini 
\thanks{my496@georgetown.edu,
Georgetown University, 
Department of Mathematics and Statistics
327A St. Mary's Hall
37th and O Streets, N.W., Washington D.C. 20057
Phone: (202) 687-6214
Fax: (202) 687.6067
}
}\maketitle

\abstract{ 
In this paper, we consider a proximal linearized alternating direction method of multipliers 
(PL-ADMM)   for solving linearly constrained nonconvex and possibly nonsmooth optimization 
problems. The algorithm is generalized by using variable metric proximal terms in the primal updates 
and an over-relaxation stepsize in the multiplier update.  
We prove that the sequence generated by this method is bounded and its limit points 
are critical points. Under the powerful Kurdyka-{\L ojasiewicz} properties
we prove that the sequence has a finite length thus converges, and we drive its convergence rates.
}

\section{Introduction}
\subsection{History}
The alternating direction method of multipliers   (ADMM) 
\cite{DistribOpt,DengYin16,EcksteinBertsekas92,GabayMercier76,hnyz15}
is one of the most successful mathematical methodologies to solve linearly 
constrained optimization problems. 
ADMM is closely related to the Douglas-Rachford \cite{DouglasRachford56}
and Peachman-Rachford \cite{PeacemanRachford55} operator splitting methods that date back to the
1950s. One of the significant advantages of using the ADMM to solve structured problems is that it reduces the complexity of original problem by breaking it into several simpler minimization subproblems
that each can be solved independently. 
The approximate local solutions to these subproblems are then
coordinated to find a global solution to the original problem. 
Some important applications of ADMM and its variants include machine learning, statistics,
compressive sensing, image and signal processing,
sparse and low-rank approximations, see 
\cite{RaymondChan15,chy13,hnyz15,AdmmConsensus14,WT10,YangZhangYin2010,yhcy12, EdgeColorYKZ19,YKssvm15,MaryamKangSIIM}
and the surveys \cite{DistribOpt, EcksteinYao15}.

Theoretical analysis of the ADMM  has extensively studied in the context of {\it convex optimization}  
\cite{boley13,DengYin16,EcksteinBertsekas92,GabayMercier76,hyz13, LionsMercier79}.
Boley \cite{boley13} studied the local linear convergence for solving quadratic and linear
programs. 
Deng and Yin \cite{DengYin16} studied the convergence rate of a
a general ADMM  method in which a proximal term 
 was added to each subproblem.
 Eckstein and Bertsekas \cite{EcksteinBertsekas92} proved the
linear convergence for solving linear programs, which
depends on a bound on the largest iterate in the course of the ADMM algorithm.
Hager, Yashtini, and Zhang \cite{hyz13} established the ergodic convergence rate
of a proximal linearized ADMM method \cite{chy13}, in which the proximal parameter updates
 through a backtracking line search strategy.
Lions and Mercier  \cite{LionsMercier79} showed that the Douglas--Rachford operator 
splitting method converges linearly under the assumption that some involved monotone
operators are both coercive and Lipschitz. 
In  \cite{GoldfarbMa12} the authors established the  $O(1/k)$ and $O(1/k^2)$ convergence rates of 
a Jacobi version of ADMM (MSA) and its accelerated variant (FaMSA) respectively,
when both functions in the objective are convex smooth and have Lipschitz continuous gradients. 
Similar results also established in \cite{GoldfarbMaScheinberg13} for a Gauss-Seidel variant of ADMM
(ALM) and its accelerated variant (AccALM), which requires only one of the two functions to be smooth with 
Lipschitz continuous gradients. 
Based on the variational inequality, He and Yuan established the $O(1/k)$ convergence rate of ADMM for the values \cite{HeYuan12} and the sequence \cite{HeYuan14}.
Goldstein and Donoghue  \cite{GoldsteinDonoghue14} proved the 
$O(1/k)$ rate for the ADMM and $O(1/k^2)$ rate for its accelerated variant (Fast ADMM)
based on the dual objective function under strong convexity assumptions.
Davis and Yin  \cite{DavisYin17} showed that the linear and sublinear rates of ADMM can be 
obtained as an extension of Douglas-Rachford splitting method in the sense of fixed-point 
residual and objective error.
The R-linear convergence rate of ADMM under certain error bound condition
proved in \cite{HonLuo17}. 

The convergence of multi-block ADMM for minimizing the sum of more than two 
functions requires strong convexity assumptions on some
or all functions in the objective 
 \cite{CaiHanYuan14,ChenShenYou15,DengYin16,HanYuan12,LinMaZhang15,LinMaZhang2018}. 
 Without strong convexity assumptions, Chen et al. \cite{ChenHeYuanYe16}  showed that the multi-block ADMM 
is not necessarily convergent unless there exists at least two orthogonal coefficient matrices. 
Deng et al. \cite{DengLaiPengYin17} showed that the multi-block Jacobi ADMM 
converges with the $o(1/k)$ rate when all coefficient matrices are mutually near-orthogonal and have full 
column rank or proximal terms are added to the subproblems.
In \cite{HeTaoYuan12,HeTaoXuYuan10} the authors combined  the ADMM with either forward or backward 
substitution procedure and they proved the convergence results from contraction perspectives.
He, Hou, and Yuan \cite{HeTaoYuan17} showed that the local linear convergence rate is provable
 if certain standard error bound condition is assumed. 
 He et al. \cite{HeHouYuan15} combined the Jacobi ADMM with a relaxation step
and proved the convergence and $O(1/k)$ rate in both ergodic and non-ergodic senses. 

The ADMM algorithm generally fails to solve {\it nonconvex possibly nonsmooth} optimization problems.
However, its great performance on some practical applications such as  phase retrieval \cite{Wen12},
distributed clustering \cite{Forero11}, 
sparse zero variance discriminant analysis \cite{AmesHong15},
matrix separation \cite{matrixsep14},
imaging \cite{MaryamKangSIIM,YKssvm15,EdgeColorYKZ19},
sparse feedback control \cite{sparsefeedback} 
has encouraged researchers to study underlying conditions for which 
nonconvex nonsmooth ADMM converges. 
Wang and Yin \cite{WangYin19}  established the convergence of nonconvex nonsmooth multi-block ADMM  
for both separable and non-separable objective functions.
For the separable objective function they assumed that majority of functions in the objective are 
prox-regular. When the functions are either smooth and nonconvex or convex and nonsmooth,
Hong et al. \cite{HongLuoRazaviyayn16} proved the convergence of
the ADMM, provided that the penalty parameter in the augmented Lagrangian is chosen to be sufficiently large.
Wang et al.  \cite{WangCaoXu18}  analyzed the so-called Bregman ADMM under the assumption that the objective function is subanalytic, and included the standard ADMM as a special case.
 In \cite{LiuShenGu19} the authors studied the convergence of a full linearized
ADMM, given that one of the coefficient matrices is full column rank and the 
penalty parameter in the augmented Lagrangian is sufficiently large.
The work \cite{JiangLinMaZhang19} considered a variant of multi-block proximal ADMM, where 
the proximal ADMM updates can be implemented for all the block variables 
except for the last block, for which either a gradient step or a majorization–minimization step is implemented. 
The authors analyzed its iteration complexity. 
In \cite{GuoHanWu17} Guo et al. considered a two-block ADMM to minimize a sum of two nonconvex functions with linear constraints and they proved the convergence by assuming that the generated sequence is bounded.
Under some assumptions on the coefficient matrices,
Yang, Pong, and Chen  \cite{YangPongChen17} studied a three-block ADMM to solve a special class of nonconvex and nonsmooth problems with applications to background/foreground extraction.
Li and Pong \cite{LiPong15} proved the global convergence of  two-block ADMM 
under assumption that one of the functions in the objective is twice differentiable and has bounded Hessian.
Under Kurdyka-\L ojasiewicz  property, 
Bot and Nguyen \cite{ProxADMMnoncvx19} and Yashtini \cite{yashtini2020-GADMM}  
established the convergence and convergence rates of proximal variants of ADMM. 
The iteration complexity of two classes of fully and partially linearized multi-block ADMM
with the choice of some relaxation parameter in the multiplier update established in \cite{MeloMonteiro17}.
In \cite{ThemelisPatrinos20} the authors showed that ADMM is closely 
related to Douglas--Rachford splitting and Peaceman--Rachford splitting, and
established a unifying global convergence result under the only assumption of Lipschitz differentiability 
of one function. 


\subsection{Optimization problem and the PL-ADMM algorithm}
In this paper, we study the global convergence analysis of a variant of ADMM algorithm 
to solve the linearly constrained {\it nonconvex} and {\it nonsmooth} 
 minimization problem
\begin{eqnarray}\label{OP}
\begin{array}{ccc}
&\displaystyle{\min_{x,y}}&
\mathcal F(x,y):=
f(x)+g(x)+h(y)\\
&{\rm s.t.}& Ax+By+c=0, 
\end{array}
\end{eqnarray}
where $x\in\mathbb R^n$ and $y\in  \mathbb R^m$ are unknown variables,
$A\in\mathbb R^{n\times p}$,
$B\in\mathbb R^{m\times p}$,
$c\in\mathbb R$.
The function $f:\mathbb R^n\to \bar{\mathbb R}$ is proper and lower-semicontinuous
while $g:\mathbb R^n\to \bar{\mathbb R}$ and 
$h: \mathbb R^m\to \bar{\mathbb R}$ are proper smooth functions.
We do not assume any convexity assumption on $f$, $g$, and $h$.
The augmented Lagrangian function $\mathcal L^{\alpha} (x,y,z)$ 
associated with the problem (\ref{OP}) is defined by 
\begin{eqnarray}\label{augL}
&\mathcal L^{\alpha}: \mathbb R^n\times\mathbb R^m\times \mathbb R^p\to\mathbb R\nonumber&\\
&\mathcal L^{\alpha} (x,y,z)=f(x)+g(x)+h(y)+\langle z, Ax+By+c\rangle 
+\frac{\alpha}{2}\big\|Ax+By+c\big\|^2,&
\end{eqnarray}
where $\alpha>0$ and $z\in\mathbb R^p$ is the Lagrange multiplier  associated with the linear constraint
$Ax+By+c=0$.
Let $\{Q_1^k\}_{k\ge 0}\subseteq  \mathbb R^{n\times n}$
and $\{Q_2^k\}_{k\ge 0}\subseteq  \mathbb R^{m\times m}$ be
two sequences of symmetric and positive semidefinite matrices.
Given the initial vector $(x^0,y^0,z^0)$ and for $k=1,2, \dots$ until some stopping criterion satisfied 
the variable metric proximal ADMM algorithm generates 
the sequence $\{(x^k,y^k,z^k)\}_{k\ge 0}$ recursively as follows
\begin{eqnarray}\label{PADMM}
\begin{array}{cll}
x^{k+1}&\in&\displaystyle{\arg\min_{x}}\; \mathcal L^{\alpha} (x,y^k,z^k)+\frac 12 \|x-x^k\|^2_{Q_1^k},\\
y^{k+1}&=&\displaystyle{\arg\min_{y}}\; \mathcal L^{\alpha} (x^{k+1},y,z^k)+\frac 12 \|y-y^k\|^2_{Q_2^k},\\
z^{k+1}&=&z^k+\alpha (Ax^{k+1}+By^{k+1}+c),
\end{array}
\end{eqnarray}
where $\|v\|^2_{Q}=\langle v, Q v\rangle$ for any $v\in\mathbb R^d$ and $Q\in\mathbb R^{d\times d}$,
$\langle \cdot, \cdot\rangle$ denotes the Euclidean inner product,
and $\|\cdot\|=\sqrt{\langle\cdot,\cdot\rangle}$ denotes the $\ell_2$ norm. The algorithm (\ref{PADMM}) can be equivalently written as follows
\begin{eqnarray*}
\begin{array}{lll}
x^{k+1}&\in&\displaystyle{\arg\min_{x\in\mathbb R^n} }
f(x)+g(x)+\langle z^k,Ax\rangle 
+\frac{\alpha}{2}\|Ax+By^k+c\|^2+\frac 12 \|x-x^k\|^2_{Q_1^k}, 
\\
y^{k+1}&=&\displaystyle{\arg\min_{y\in\mathbb R^m} }
h(y)+ \langle z^k,By\rangle 
+\frac{\alpha}{2}\|By+Ax^{k+1}+c\|^2+\frac 12 \|y-y^k\|^2_{Q_2^k}, 
\\
z^{k+1}&=&z^k+\alpha(Ax^{k+1}+By^{k+1}+c).
\end{array}
\end{eqnarray*}
For efficiency, we take advantage of the smooth structure of $g(\cdot)$,  $\frac{\alpha}{2}\|A \cdot +By^k+c\|^2$,
and $h(\cdot)$, and we replace them by their proper linearizations  to obtain
the variable metric Proximal Linearized ADMM (PL-ADMM) algorithm:
\begin{eqnarray}\label{Alg1}
\begin{array}{l}
x^{k+1}\in
\displaystyle{\arg\min_{x\in\mathbb R^n}\; \hat f^k(x)}\\
\hspace{.2in}\hat f^k(x):=
f(x)+
\big\langle \nabla g(x^k)+\alpha A^*\big(Ax^k+By^k+c+{\alpha}^{-1} z^k\big),x-x^k\big\rangle
+\frac 12\|x-x^k\|^2_{Q_1^k}, 
\\
y^{k+1}=\displaystyle{
\arg\min_{y\in\mathbb R^m}}\;\hat h^k(y)
\\
\hspace{.2in}\hat h^k(y):=
\big\langle \nabla h(y^k)+B^*z^k, y-y^k\big\rangle
+\frac{\alpha}{2}\|By+Ax^{k+1}+c\|^2+\frac 12 \|y-y^k\|^2_{Q_2^k},
\\[.1in]
z^{k+1}=z^k+\alpha \beta(Ax^{k+1}+By^{k+1}+c),
\end{array}
\end{eqnarray}
where $\beta\in(0,2)$ is an over-relaxation parameter.
The PL-ADMM algorithm (\ref{Alg1}) is related but different from \cite{IEEENonADMM19, ProxADMMnoncvx19}.
Work \cite{LiuShenGu19} considers PL-ADMM with the proximal terms 
$\frac{L_x}{2}\|x-x^k\|^2$ and $\frac{L_y}{2}\|y-y^k\|^2$, where $L_x>0$ and $L_y>0$ 
are fixed positive constants, and $\beta=1$. {\it Algorithm\;2} in  \cite{ProxADMMnoncvx19} does 
not exploit lineariziation of $\frac{\alpha}{2} \|A\cdot+By^k+c\|^2$ in the $x$ subproblem,
and solves (\ref{OP}) with $g(x)=0$, $A=-I_{n\times n}$ and $c=0$.

Note that wise choices of proximal matrices $\{Q_i^k\}_{k\ge 0}$ for $i=1,2$ can lead us to much easier
computations for $x^{k+1}$ and $y^{k+1}$, consequently might yield a more efficient scheme. 
For instance, by setting $Q_1^k=\frac {1}{t_k} I_n$ where $\{t_k\}_{k\ge 0}$ is a positive sequence 
and $I_n$ is an $n\times n$ identity matrix, the $x$ subproblem in (\ref{Alg1}) becomes 
the following prox-linear problem 
\begin{eqnarray*}\label{prox}
x^{k+1}&:=&\arg\min_{x\in\mathbb R^n}
\Big\{f(x)+\big\langle p^k,x-x^k\big\rangle+\frac{1}{2t^k}\|x-x^k\|^2\Big\}\\
&=& \arg\min_{x\in\mathbb R^n}
\Big\{f(x)+\frac{1}{2t^k}\|x-x^k+t^kp^k\|^2\Big\}
\end{eqnarray*}
where $p^k:=\nabla g(x^k)+\alpha A^*(Ax^k+By^k+c+\alpha^{-1}z^k)$. 
Prox-linear subproblems can be easier 
to compute specially when $f$ is a separable function.
The $y$ subproblem in (\ref{Alg1}) contains the second order term $\frac{\alpha}{2} y^* B^*B y$. 
If $B^*B $ is nearly a diagonal matrix (or nearly an orthogonal matrix),
 one can replace $B^*B$ by a certain symmetric diagonal (orthogonal) matrix $D\approx B^*B$.
This replacement gives rise to $\frac{\alpha}{2} y^* B^*B y=\frac{\alpha}{2} y^* D y^*$,
and then one can choose $Q_2^k= \alpha (D-B^*B)$ for efficiency.

\subsection{Main contribution}
The main contribution of this paper is the establishment of 
theoretical convergence and rate analysis of the PL-ADMM 
algorithm, introduced in  (\ref{Alg1}). We prove 
\begin{itemize}
\item  the PL-ADMM sequence  is 
bounded  (Theorem \ref{thmbdd}).
\item any limit point of the PL-ADMM sequence is a stationary point (Lemma \ref{thm-cluster}).
\item the PL-ADMM sequence is Cauchy, hence it converges (Theorem \ref{conv}).
\item the rate of convergence for the error of regularized augmented Lagrangian
(Theorem \ref{FR}). By Lemma \ref{samelimit}, this provides the rate of convergence for the error of 
objective function.
\item the convergence rate for the sequential error (Theorem \ref{seqrate}).
\end{itemize}

\subsection{Notation }
Throughout this paper, we denote $\mathbb R$ as the real number set
while $\mathbb Z$ as the set of integers. 
The set $\bar{\mathbb R}:=\mathbb R\cup\{+\infty\}$ is the extended real number,
$\mathbb R_+$ is the positive real number set,
and $\mathbb Z_+$ is the set of positive integers.
Given the matrix $X$, ${\rm Im}(X)$ denotes its image.
We denote by $I_n$ the $n\times n$ identity matrix for $n\in\mathbb Z_+$. The minimum, maximum,
and the smallest positive eigenvalues of the matrix $X\in\mathbb R^{n\times n}$ are denoted by 
$\lambda_{\min}^X$, $\lambda_{\max}^X$, $\lambda_+^X$, respectively.
The Euclidean scalar product of $\mathbb R^n$ and its corresponding 
norms are, respectively, denoted by $\langle \cdot, \cdot\rangle$
and $\|\cdot\|=\sqrt{\langle \cdot,\cdot\rangle}$.
 If  $n_1,\dots, n_p\in\mathbb Z_+$ and $p\in\mathbb Z_+$,
then for any $v:=(v_1,\dots,v_p)\in\mathbb R^{n_1}\times\mathbb R^{n_2}\times\dots\times\mathbb R^{n_p}$ and $v':=(v'_1,\dots,v'_p)\in \mathbb R^{n_1}\times\mathbb R^{n_2}\times\dots\times\mathbb R^{n_p}$ the Cartesian product and its norm are defined by
\[
\ll v,v'\gg =\sum_{i=1}^{p}\langle v_i,v_i'\rangle\quad\quad
\frac{1}{\sqrt{p}}  \sum_{i=1}^{p} \| v_i\| \le |||v||| = \sqrt{\sum_{i=1}^{p} \| v_i\|^2}\le  \sum_{i=1}^{p} \| v_i\|.
\]
For the sequence $\{u^k\}_{k\ge 1}$, $\Delta u^{k}:=u^k-u^{k-1}$,
for all $k\ge 1$. 

\subsection{Basic results on nonsmooth analysis}
 Let $\Phi:\mathbb R^d\to\bar{\mathbb R}$ be a proper and lower semicontinuous function. 
 The domain  of $\Phi$, denoted ${\rm dom}\; \Phi$, is defined by 
$
{\rm dom}\; \Phi:=\{x\in\mathbb R^d:\; \Phi(x)<+\infty\}.
$
For any $x\in {\rm dom}\; \Phi$, the {\it Fr\'echet (viscosity) subdifferential} of $\Phi$ at $x$,
denoted $\hat\partial\Phi(x)$, is defined by
\[
\hat\partial\Phi(x)=
\Big\{
s\in\mathbb R^d: \;\;\lim_{y\neq x}\inf_{y\to x} \frac{\Phi(y)-\Phi(x)-\langle s, y-x\rangle}{\|y-x\|}\ge 0
\Big\}.
\]
For $x\notin {\rm dom}\;\Phi$, then $\hat\partial\Phi(x) =\emptyset$.
The {\it limiting (Mordukhovich) subdifferential}, or simply the subdifferential for short,
 of $\Phi$ at $x\in {\rm dom}\;\Phi$, denoted $\partial\Phi(x)$, is defined by
\begin{eqnarray*}
\partial\Phi(x):=\{s\in\mathbb R^d: \exists x^k\to x,\;\Phi(x^k)\to\Phi(x)\;\;
{\rm and} \;\; s^k\in\partial\hat\Phi(x^k)\to s\;\;{\rm as}\; k\to+\infty\}.
\end{eqnarray*}
 For any $x\in\mathbb R^d$, the above definition implies 
$\hat\partial\Phi(x) \subset \partial \Phi(x)$,
where the first set is convex and closed while the second 
one is closed (\cite{RockafellarWets}, Theorem 8.6).
 When $\Phi$ is convex the two sets coincide and
\[
\hat\partial\Phi (x)=\partial\Phi (x) =\{s\in\mathbb R^d: \Phi(y)\ge \Phi(x)
+\langle s, y-x\rangle\; \forall y\in\mathbb R^d\}.
\]
Let $(x^k,s^k)\in {\rm Graph}\; \partial\Phi:=\{(x,s)\in\mathbb R^d\times\mathbb R^d: s\in\partial \Phi(x)\}$
and $\lim_{k\to\infty}(x^k,s^k)=(x^*,s^*)$. Since $s^k\in\partial \Phi(x^k)$
and $\lim_{k\to\infty}\Phi(x^k)=\Phi(x^*)$ we have  $(x^*,s^*)\in {\rm Graph}\; \partial\Phi$.

The well-known Fermat's rule ``$x\in\mathbb R^d$ is a local minimizer of $\Phi$, then $\partial \Phi(x)\ni 0$''
remains unchanged. If $x\in\mathbb R^d$ such that $\partial \Phi(x)\ni 0$ the point $x$ is called 
a critical point. We denote by ${\rm crit}\; \Phi$ the set of {\it critical points} of $\Phi$,
that is 
\[
{\rm crit}\; \Phi=\{x\in\mathbb R^d:0\in\partial\Phi(x)\}.
\]
 Let $\Omega$ be a subset of $\mathbb R^d$  and $x$ be any point in $\mathbb R^d$. The distance from $x$
to $\Omega$, denoted ${\rm dist}(x,\Omega)$, is defined by 
${\rm dist}(x,\Omega)=\inf\{\|x-z\|:\;z\in\Omega\}.$
If $\Omega=\emptyset$, then ${\rm dist}(x,\Omega)=+\infty$ for all $x\in\mathbb R^d$. 
For any real-valued function $\Phi$ on $\mathbb R^d$ we have
\[
{\rm dist}(0,\partial\Phi(x))=\inf\{\|s^*\|: \; s^*\in\partial \Phi(x)\}
\]  
Let $F:\mathbb R^n\times\mathbb R^m\to(-\infty,+\infty]$ be a lower semicontinuous function. The subdifferentiation of $F$ at the point $(\hat x,\hat y)$ is defined by 
$
\partial F(\hat x, \hat y)=\Big(\partial_x F(\hat x,\hat y), \partial_y F(\hat x,\hat y)\Big),
$
where $\partial_x F$ and $\partial_y F$ are respectively the differential of the function 
$F(\cdot,y)$ when $y\in\mathbb R^m$ is fixed, and $F(x,\cdot)$ when $x$ is fixed.

\begin{lemma}
Let $\Phi:\mathbb R^d\to\mathbb R$ be Fr\'echet differentiable such that 
its gradient is Lipschitz continuous with constant $L_{\Phi}>0$. Then the following statements are true
\begin{itemize}
\item [a.] For every $u,v\in\mathbb R^d$ and every $\xi\in[u,v]=\{(1-t)u+tv:\;t\in[0,1]\}$
it holds 
\begin{eqnarray}\label{Lip}
\Phi(v)\le \Phi(u)+\langle \nabla \Phi(\xi),v-u\rangle +\frac{L_{\Phi}}{2}\|v-u\|^2.
\end{eqnarray}
%
\item [b.] If $\Phi$ is bounded from below, then the term
$\Phi(v)-(\delta-\frac{L_{\Phi} \delta^2}{2})\|\nabla \Psi(v)\|^2$
is bounded from below for every $\delta>0$.
\end{itemize}
\end{lemma}
\begin{proof}
a. Let $u, v\in\mathbb R^d$, and $\xi=(1-t)u+tv$ for $t\in [0,1]$. Then we have
\begin{eqnarray*}
\Phi(v) -\Phi(u) &=& \int_0^1 \langle \nabla \Phi((1-r) u+rv), v-u \rangle dr \\
&=&  \int_0^1  \langle \nabla \Phi((1-r) u+rv)-\nabla \Phi(\xi), v-u \rangle dr
+\langle \nabla \Phi(\xi) , v-u\rangle 
\end{eqnarray*}

Since $\Phi$ has Lipschitz continuous gradients with constant $L_{\Phi}>0$, then we have
\begin{eqnarray*}
|\Phi(v) -\Phi(u) - \langle \nabla \Phi(\xi) , v-u\rangle|
&=&\Big| \int_0^1  \langle \nabla \Phi((1-r) u+rv)-\nabla \Phi(\xi), v-u \rangle dr\Big | \\
&\le & \int_0^1  \Big\|  \nabla \Phi((1-r) u+rv)-\nabla \Phi(\xi)\Big\|\cdot \Big\|v-u\Big \|dr \\
&\le & L_{\Phi} \|v-u\|^2 \int_0^1 |r-t| dr \\
&=&  L_{\Phi} 
\|v-u\|^2 \Big( \int_0^t (-r+t) dr + \int_t^1 (r-t) dr \Big) \\
&=&  L_{\Phi} \Big(\frac 12 -t(1-t)\Big) \|v-u\|^2 \\
&\le & \frac {L_{\Phi}}{2}\|v-u\|^2.
\end{eqnarray*}

b. We set $\xi=u$ and $v=u-\delta \nabla\Phi(u)$ in (\ref{Lip}) to get
\begin{eqnarray*}
\inf_{v}\Phi(v)\le \Phi \big(u-\delta \nabla\Phi(u)\big)\le \Phi(u) -(\delta-\frac{L_{\Phi}\delta^2}{2})\|\nabla \Phi(u)\|^2.
\end{eqnarray*}
Since $\Phi$ is bounded below, then the right hand side is bounded from below for any $\delta>0$.
 $\blacksquare$
\end{proof}
%

\subsection{\bf Kurdyka-{\L}ojasiewicz (K{\L}) properties}\label{sub:KL}
Our main theoretical results are based on Kurdyka-{\L}ojasiewicz  (K{\L}) properties,
thus in the following we recall some related definitions and facts. 

Let $\Phi:\mathbb R^d\to \bar{\mathbb R}$  be a proper lower semicontinuous function. 
For $-\infty<\eta_1<\eta_2\le +\infty$, we define 
\[
[\eta_1<\Phi<\eta_2] =\{x\in\mathbb R^d: \; \eta_1<\Phi(x)<\eta_2\}.
\]
Let $\eta\in (0,+\infty]$. We denote by $\Psi_{\eta}$ the set of all continuous and concave functions 
 $\psi:[0,\eta]\to [0,+\infty)$ where  $\psi(0)=0$, $\psi$ is continuously differentiable on $(0,\eta)$,
 and $\psi'(s)>0$ over $(0,\eta)$.

\begin{definition}\label{KLprop}
Let $\Phi:\mathbb R^d\to \bar{\mathbb R}$  be a proper  lower semicontinuous function.
We say that $\Phi$ has the Kurdyka-{\L}ojasiewicz (K{\L}) property at 
$x^*\in {\rm dom} \;\partial \Phi$ if there exists a neighborhood $U$ of $x^*$, 
$\eta\in(0,\infty)$ and a continuous concave function $\psi\in\Psi_{\eta}$ such that
\begin{eqnarray}\label{KLine}
\psi'\big(\Phi(x)-\Phi(x^*)\big) {\rm dist}\big(0,\partial \Phi(x)\big)\ge 1.
\end{eqnarray}
 If $\Phi$ satisfies the  property at each point of $ {\rm dom} \;\partial \Phi$, then $\Phi$ is a 
{K\L} function. 
\end{definition}

\begin{definition}\label{KLset}
Let $\Phi:\mathbb R^d\to \bar{\mathbb R}$  be a proper  lower semicontinuous function that
 takes constant value on $\Omega$ and satisfies the {K\L} property at each point of $\Omega$.
We say $\Phi$ satisfies a K{\L} property on $\Omega$ if there exists
$\epsilon>0$, $\eta>0$, and $\psi\in\Psi_{\eta}$ such that 
for every $x^*\in\Omega$ and every element $x$ belongs to the intersection
$ \{x\in\mathbb R^d: {\rm dist} (x,\Omega)<\epsilon\} \cap [\Phi(x^*) <\Phi(x) <   \Phi(x^*)+\eta],$
 (\ref{KLine}) holds. 
 \end{definition}
 
\begin{definition}\label{KLexponent}
Let the proper lower semicontinuous function $\Phi$ satisfying 
the K{\L} property at $x^*\in {\rm dom}\partial \Phi$,
and the corresponding function $\psi\in\Psi_{\eta}$ can be chosen as $\psi(s)=\bar c s^{1-\theta}$
for some $\bar c>0$ and $\theta\in [0,1)$, i.e., there exist $c>0$ and a neighborhood $U$ of $x^*$
and $\eta\in (0,+\infty]$ such that  
\begin{eqnarray}\label{KLexp}
(\Phi(x)-\Phi(x^*))^{\theta}\le c\; {\rm dist}(0,\partial \Phi(x)),\quad\quad \forall x\in U
\end{eqnarray}
holds. Then we say $\Phi$ has the K{\L} property at $x^*$ with an exponent $\theta$.
This asserts that $(\Phi(x)-\Phi(x^*))^{\theta}/{\rm dist}(0,\partial \Phi(x)) $ remains
bounded around $x^*$.
\end{definition}

This definition encompasses broad classes of functions arising in practical 
optimization problems. For example,
 if $f$ is a proper closed semi-algebraic function, then
$f$ is a K{\L} function with exponent $\theta\in [0,1)$ \cite{DM-KL-13}.
The function $l(Ax)$, where $l$ is strongly convex on any compact set 
and it is twice differentiable, and $A\in\mathbb R^{m\times n}$, is a K{\L} function.
Convex piecewise linear-quadratic function, 
$\|x\|_1$, $\|x\|_0$, $\gamma \sum_{i=1}^k |x_{[i]}|$ where $|x_{[i]}|$
is the $i$th largest (in magnitude) entry in $x$, $k\le n$ and $\gamma \in (0,1]$,
$\delta_{\Delta}(x)$ where $\Delta=\{x\in\mathbb R^n: e\tr x=1, x\ge 0\}$,
and least square problems with SCAD \cite{Fan97}
or MCP \cite{Zhang10} regularized functions are  all K{\L} 
 functions.
The K{\L} property characterizes the local geometry of a function around the 
set of critical points. If $\Phi$ is differentiable then $\partial \Phi(x)=\nabla \Phi(x)$,
the inequality (\ref{KLexp}) becomes 
$
\Phi(x)-\Phi(x^*)\le c \|\nabla \Phi(x)\|^{\frac {1}{\theta}},
$
which generalizes the Polyak-{\L}ojasiewics condition 
$\Phi(x)-\Phi(x^*)\le \mathcal O(\|\nabla \Phi(x)\|^2)$ ($\theta=\frac 12$).
We refer interested readers to 
\cite{AttouchBolte09, PAM-KL-10, DM-KL-13,LojasiewicsSmoothsubana06,
CharacLineq10,PALM14,Clarkesubgradients07,FrankelGuillaumePey15}
for more properties of {K\L} functions and illustrating examples.

\subsection{Organization} The paper is organized as follows.
 In Section \ref{sec2} we drive some fundamental properties of the sequence
 generated by the PL-ADMM algorithm, including subgradient and dual 
 bounds,  limiting continuity, and change of the augmented Lagrangian during each variable 
 update.  In Section \ref{sec4}, we introduce the regularized augmented Lagrangian
 and some sufficient decrease condition which helps us proving that the PL-ADMM sequence is bounded.
In Section \ref{sec4} we prove the convergence and the convergence rates of the PL-ADMM  based on the  (K{\L}) properties. Section \ref{sec5} concludes the paper.

 \section{Augmented Lagrangian based Properties}\label{sec2}
In this section, we establish some important properties for the PL-ADMM algorithm (\ref{Alg1})
based on the augmented Lagrangian functional (\ref{augL}).

\begin{assumption}\label{assumptionA}
We begin by making some assumptions. \\
{A1.} The function $f$ is lower-semicontinuous and coercive;\\
A2. $g$ is coercive and $h$ is bounded from below; \\
{A3.} The matrix $B$ is full rank, ${\rm Im}({A})\subseteq {\rm Im}(B)$ and $b\in {\rm Im}(B)$; \\
{A4.} $g$ and $h$ have respectively $L_g$ and  $L_h$ Lipschitz continuous gradients;\\
{A5.} $\alpha>0$, $\beta\in(0,2)$, ${q_i}^-:=\inf_{k\ge 0} \|Q_i^k\|\ge 0$ and $q_i:=\sup_{k\ge 0} \|Q_i^k\|<+\infty$, $i=1,2$
 \end{assumption}

\begin{lemma}\label{thm-d}
{\bf (Subgradient bound)} Suppose that the Assumption \ref{assumptionA} holds. Let $\{(x^k,y^k,z^k)\}_{k\ge 0}$ be a sequence generated by the PL-ADMM algorithm (\ref{Alg1}). There exists 
a constant $\rho>0$ and $d^{k+1}:=(d^{k+1}_x,d^{k+1}_y,d^{k+1}_z)\in\partial \mathcal L^{\alpha}(x^{k+1},y^{k+1},z^{k+1})$ 
such that 
\begin{eqnarray}\label{nd}
|||d^{k+1}||| &\le&
 \rho \Big(\|\Delta x^{k+1}\|+ \|\Delta y^{k+1}\|+ \|\Delta z^{k+1}\|\Big), 
\end{eqnarray}
where for any sequence $\{u^k\}_{k\ge 0}$, $\Delta u^{k+1}=u^{k+1}-u^{k}$, and
\begin{eqnarray}\label{rho}
\rho:=\max\{
q_1+L_g+\alpha\|A\|^2,
\alpha \|A\| \|B\|+L_h+q_2,
 \| A\|  +\|B\|+\frac{1}{\alpha\beta}\}.
\end{eqnarray}

\end{lemma}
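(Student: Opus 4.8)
The plan is to write down the first-order optimality conditions for the three updates in (\ref{Alg1}) and then rearrange them so that each block of $\partial\mathcal L^\alpha(x^{k+1},y^{k+1},z^{k+1})$ is expressed purely in terms of successive differences $\Delta x^{k+1}$, $\Delta y^{k+1}$, $\Delta z^{k+1}$. First I would recall that, since $g$ and the quadratic penalty are smooth and $f$ is the only nonsmooth piece, the $x$-update gives $0\in\partial f(x^{k+1})+\nabla g(x^k)+\alpha A^*(Ax^k+By^k+c+\alpha^{-1}z^k)+Q_1^k\Delta x^{k+1}$. On the other hand, the $x$-block of $\partial\mathcal L^\alpha$ at the new point is $\partial f(x^{k+1})+\nabla g(x^{k+1})+A^*z^{k+1}+\alpha A^*(Ax^{k+1}+By^{k+1}+c)$. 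Subtracting, and using the multiplier update $z^{k+1}=z^k+\alpha\beta(Ax^{k+1}+By^{k+1}+c)$ to eliminate the residual, I define
\[
d_x^{k+1}:=\nabla g(x^{k+1})-\nabla g(x^k)-Q_1^k\Delta x^{k+1}+A^*\bigl(z^{k+1}-z^k\bigr)+\alpha A^*\bigl(A\Delta x^{k+1}+B\Delta y^{k+1}\bigr),
\]
which lies in the $x$-block of $\partial\mathcal L^\alpha(x^{k+1},y^{k+1},z^{k+1})$. A completely analogous computation for the $y$-update (now everything is smooth, so this is an ordinary gradient identity) produces $d_y^{k+1}$ built from $\nabla h(y^{k+1})-\nabla h(y^k)$, $-Q_2^k\Delta y^{k+1}$, and $B^*(z^{k+1}-z^k)$; here the penalty term $\alpha B^*(By^{k+1}+Ax^{k+1}+c)$ in the $y$-subproblem already matches the one in $\mathcal L^\alpha$, so fewer correction terms appear. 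Finally the $z$-block of $\partial\mathcal L^\alpha$ is simply $Ax^{k+1}+By^{k+1}+c$, and by the multiplier update this equals $(\alpha\beta)^{-1}(z^{k+1}-z^k)=(\alpha\beta)^{-1}\Delta z^{k+1}$, so set $d_z^{k+1}:=(\alpha\beta)^{-1}\Delta z^{k+1}$.

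Next I would bound each block in norm. Using Assumption A4, $\|\nabla g(x^{k+1})-\nabla g(x^k)\|\le L_g\|\Delta x^{k+1}\|$ and $\|\nabla h(y^{k+1})-\nabla h(y^k)\|\le L_h\|\Delta y^{k+1}\|$; using A3, $\|Q_i^k\|\le q_i$; and the operator-norm inequalities $\|A^*(\cdot)\|\le\|A\|\,\|\cdot\|$, $\|A^*A(\cdot)\|\le\|A\|\|B\|\cdot$ etc. Collecting terms, $\|d_x^{k+1}\|\le(q_1+L_g)\|\Delta x^{k+1}\|+\|A\|\|\Delta z^{k+1}\|+\alpha\|A\|(\|A\|\|\Delta x^{k+1}\|+\|B\|\|\Delta y^{k+1}\|)$, and similarly for $\|d_y^{k+1}\|$ and $\|d_z^{k+1}\|$. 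Then $|||d^{k+1}|||\le\|d_x^{k+1}\|+\|d_y^{k+1}\|+\|d_z^{k+1}\|$ by the right-hand inequality in the Cartesian-norm bound from the Notation section, and grouping the resulting coefficients by which difference they multiply gives a bound of the form $\rho(\|\Delta x^{k+1}\|+\|\Delta y^{k+1}\|+\|\Delta z^{k+1}\|)$, with $\rho$ the maximum of the three coefficient sums; one checks that this maximum is dominated by the expression for $\rho$ displayed in (\ref{rho}) (the stated $\rho$ is a convenient, slightly loose, upper bound on the sharp coefficients, which is all that is needed).

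The only genuinely delicate point is the bookkeeping in the first step: one must be careful that the gradient/penalty terms appearing in the $x$-subproblem of (\ref{Alg1}) are evaluated at the \emph{old} iterates $(x^k,y^k)$ while the corresponding terms in $\partial_x\mathcal L^\alpha(x^{k+1},y^{k+1},z^{k+1})$ are at the \emph{new} iterates, so that the difference of the two optimality relations telescopes cleanly into $\Delta x^{k+1},\Delta y^{k+1},\Delta z^{k+1}$ (using the multiplier recursion to convert the leftover $z^k$-versus-$z^{k+1}$ and the raw residual into $\Delta z^{k+1}$). Once that cancellation is set up correctly, the norm estimates are routine applications of the triangle inequality, Lipschitz continuity of $\nabla g,\nabla h$, and the uniform bounds on $\|Q_i^k\|$. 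I expect no obstacle beyond this careful algebra; the coercivity/eigenvalue hypotheses A1–A2 are not needed here and will be used only in the later boundedness and convergence arguments.
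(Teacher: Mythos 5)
Your proposal follows essentially the same route as the paper's proof: write the first-order optimality conditions of the three updates, rearrange them into a subgradient $d^{k+1}=(d_x^{k+1},d_y^{k+1},d_z^{k+1})\in\partial\mathcal L^{\alpha}(x^{k+1},y^{k+1},z^{k+1})$ expressed in the successive differences, then apply the triangle inequality, the Lipschitz continuity of $\nabla g$ and $\nabla h$, and the bounds $\|Q_i^k\|\le q_i$. The one substantive difference is in the $x$-block: since the $x$-subproblem of (\ref{Alg1}) linearizes the quadratic penalty at $x^k$, your $d_x^{k+1}$ correctly carries the extra term $\alpha A^*A\Delta x^{k+1}$, whereas the paper's displayed optimality condition uses $Ax^{k+1}$ in place of $Ax^k$, so that this term cancels and the coefficient of $\|\Delta x^{k+1}\|$ comes out as $q_1+L_g$. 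Under your (more faithful) bookkeeping that coefficient is $q_1+L_g+\alpha\|A\|^2$, so your closing assertion that the displayed $\rho$ of (\ref{rho}) dominates your coefficients is not literally true; the first entry of the max must be enlarged to $q_1+L_g+\alpha\|A\|^2$. This affects only the explicit constant, not the substance of the lemma (which needs only some $\rho>0$), and the discrepancy traces back to the paper's own statement of the optimality condition rather than to an error in your argument.
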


\begin{proof}
Let $k\ge 0$ be fixed. By taking partial differential of $\mathcal L^{\alpha}$ with respect to $x$,
and evaluating the result at the point $(x^{k+1},y^{k+1},z^{k+1})$ yields
\[
\partial_x \mathcal L^{\alpha}(x^{k+1},y^{k+1},z^{k+1})=
\partial f(x^{k+1})+\nabla g(x^{k+1})+
\alpha A^*\big(Ax^{k+1}+By^{k+1} +\alpha^{-1}z^{k+1}+ c\big).
\]
By the optimality condition of $x$ subproblem in (\ref{Alg1}) we have
\[
-\nabla g(x^k)-\alpha A^*\big(Ax^{k}+By^k+\alpha^{-1}z^k+ c\big)
-  Q_1^k \Delta x^{k+1}  \in \partial f(x^{k+1}). 
\]
Therefore, we obtain
\begin{eqnarray}\label{dxinL}
d_x^{k+1}&:=&\nabla g(x^{k+1})-\nabla g(x^k)
+A^*\Delta z^{k+1} +\alpha A^*B\Delta y^{k+1}+\alpha A^*A\Delta x^{k+1} - Q_1^k \Delta x^{k+1}
\nonumber \\
&\in&
\;\partial_x \mathcal L^{\alpha}(x^{k+1},y^{k+1},z^{k+1}). 
 \end{eqnarray}
Taking partial differential of $\mathcal L^{\alpha}$ with respect to $y$
and evaluating the result at $(x^{k+1},y^{k+1},z^{k+1}) $ gives
\[
\nabla_y \mathcal L^{\alpha}(x^{k+1},y^{k+1},z^{k+1}) =
\nabla h(y^{k+1})+\alpha B^*\big(Ax^{k+1}+By^{k+1}+\alpha^{-1}z^{k+1}+c\big).
\]
The optimality criterion of $y$ subproblem in (\ref{Alg1}) is given by
\[
\nabla h(y^k)
+ \alpha B^*(Ax^{k+1}+By^{k+1}+\alpha^{-1}z^k+c) 
+ {Q_2^k} \Delta y^{k+1} =0. 
\]
Thus, we have
\begin{eqnarray}\label{dyinL}
d_y^{k+1}:=\nabla h(y^{k+1})-\nabla h(y^k)
+B^*\Delta z^{k+1}- {Q_2^k} \Delta y^{k+1}
\in \nabla_y \mathcal L^{\alpha}(x^{k+1},y^{k+1},z^{k+1}).
\end{eqnarray}
By the $z$ subproblem in  the algorithm (\ref{Alg1}) 
it is easy to see that 
\begin{eqnarray}\label{dzinL}
d_z^{k+1}:=Ax^{k+1}+By^{k+1}+c=\frac{1}{\alpha\beta}
 \Delta z^{k+1}\in \nabla_z \mathcal L^{\alpha}(x^{k+1},y^{k+1},z^{k+1}).
\end{eqnarray}
Hence, by (\ref{dxinL}), (\ref{dyinL}), , and (\ref{dzinL}), we 
then have 
\[
d^{k+1}:=(d^{k+1}_x,d^{k+1}_y,d^{k+1}_z)\in\partial \mathcal L^{\alpha}(x^{k+1},y^{k+1},z^{k+1}).
\]

From (\ref{dxinL}), by using the  triangle inequality we have
\begin{eqnarray*}
\|d_x^{k+1}\|
&\le &\|\nabla g(x^{k+1})-\nabla g(x^k)\|
+\| A\|\cdot \|\Delta z^{k+1}\| +\alpha \|A\| \cdot\|B\| \|\Delta y^{k+1}\| \\
&&+
\alpha \|A\|^2\|\Delta x^{k+1}\|+  \|Q_1^k\| \|\Delta x^{k+1}\|.
\end{eqnarray*}
Since $\nabla g$ is $L_g$ Lipschitz continuous and $q_1=\sup_{k\ge 0} \|Q_1^k\|<+\infty$ we then have 
\begin{eqnarray} \label{ndx}
\|d_x^{k+1}\|\le 
\alpha \|A\| \|B\| \|\Delta y^{k+1}\|
 +(q_1+L_g+\alpha \|A\|^2)\|\Delta x^{k+1}\|
+ \| A\| \|\Delta z^{k+1}\|.
\end{eqnarray}
From (\ref{dyinL}), by the triangle inequality, 
\begin{eqnarray*}
\|d_y^{k+1}\|&\le& \|\nabla h(y^{k+1})-\nabla h(y^k)\|
+\|B\| \|\Delta z^{k+1}\| + \|{Q_2^k}\| \|\Delta y^{k+1}\|. 
\end{eqnarray*}
Since $\nabla h$ is $L_h$ Lipschitz continuous and $q_2=\sup_{k\ge 0} \|Q_2^k\|<+\infty$ 
we get 
\begin{eqnarray} \label{ndy}
\|d_y^{k+1}\|\le  (L_h+q_2)  \| \Delta y^{k+1}\|+ \|B\| \| \Delta z^{k+1}\|.
\end{eqnarray}
We also have
\begin{eqnarray}\label{ndz}
\|d_z^{k+1}\|=\frac{1}{\alpha\beta} \|\Delta z^{k+1}\|.
\end{eqnarray}
By (\ref{ndx}), (\ref{ndy}), and (\ref{ndz}) we obtain
\begin{eqnarray*}
|||d^{k+1}||| &\le&  \|d_x^{k+1}\|+ \|d_y^{k+1}\|+\|d_z^{k+1}\|
\le \rho \big(\|\Delta x^{k+1}\|+  \|\Delta y^{k+1}\|+  \|\Delta z^{k+1}\|\big),
\end{eqnarray*}
where $\rho$ defined in (\ref{rho}). $\blacksquare$
\end{proof}

\begin{lemma}\label{thm-cluster0}
{\bf (Limiting continuity)} 
Suppose that the Assumption \ref{assumptionA} holds. 
If $(x^*, y^*, z^*)$ is the limit point of a subsequence 
$\{(x^{k_j},y^{k_j},z^{k_j})\}_{j\ge 0}$, then 
$ \mathcal L^{\alpha} (x^*, y^*, z^*)=\lim_{j\to\infty} \mathcal L^{\alpha} (x^{k_j},y^{k_j},z^{k_j}).$
\end{lemma}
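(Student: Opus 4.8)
The plan is to show that $\mathcal L^{\alpha}$ is continuous along the convergent subsequence by splitting it into its three structural pieces: the separable objective $f(x)+g(x)+h(y)$, the bilinear coupling term $\langle z, Ax+By+c\rangle$, and the quadratic penalty $\tfrac{\alpha}{2}\|Ax+By+c\|^2$. The last two are globally continuous in all variables, so convergence of the subsequence $(x^{k_j},y^{k_j},z^{k_j})\to(x^*,y^*,z^*)$ immediately gives convergence of those two pieces. Likewise $g$ and $h$ are continuous (indeed $C^1$, by A4), so $g(x^{k_j})\to g(x^*)$ and $h(y^{k_j})\to h(y^*)$. The only term that is not obviously continuous is $f(x^{k_j})$, since $f$ is merely proper and lower semicontinuous (A1); lower semicontinuity alone gives $f(x^*)\le\liminf_j f(x^{k_j})$, which is the wrong direction for the claimed equality.

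To pin down $\lim_j f(x^{k_j})=f(x^*)$, I would use the optimality of the $x$-update. Since $x^{k_j+1}$ minimizes $\hat f^{k_j}(x)$ over $\mathbb R^n$, testing this minimality against $x=x^*$ yields
\begin{eqnarray*}
f(x^{k_j+1}) + \big\langle p^{k_j}, x^{k_j+1}-x^{k_j}\big\rangle + \tfrac12\|x^{k_j+1}-x^{k_j}\|^2_{Q_1^{k_j}}
\le f(x^*) + \big\langle p^{k_j}, x^*-x^{k_j}\big\rangle + \tfrac12\|x^*-x^{k_j}\|^2_{Q_1^{k_j}},
\end{eqnarray*}
where $p^{k_j}=\nabla g(x^{k_j})+\alpha A^*(Ax^{k_j}+By^{k_j}+c+\alpha^{-1}z^{k_j})$. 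Here I would invoke the standing convergence facts established earlier in the paper for the full sequence — boundedness (Theorem \ref{thmbdd}) and, crucially, $\Delta x^{k}\to 0$, $\Delta y^k\to 0$, $\Delta z^k\to 0$ (which follow from the sufficient-decrease estimates that precede Lemma \ref{thmbdd}) — so that $x^{k_j+1}\to x^*$ as well, $p^{k_j}$ is bounded, and the inner-product and $Q_1^{k_j}$-norm terms on both sides converge (the $Q_1^{k_j}$-norm on the right is controlled using $\|Q_1^{k_j}\|\le q_1$ from A3 and $x^{k_j}\to x^*$, while the one on the left vanishes). Taking $\limsup_j$ on both sides therefore produces $\limsup_j f(x^{k_j+1}) \le f(x^*)$. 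Combined with $x^{k_j+1}\to x^*$ and the lower semicontinuity of $f$, which gives $f(x^*)\le\liminf_j f(x^{k_j+1})$, we conclude $\lim_j f(x^{k_j+1})=f(x^*)$; and since $\Delta x^{k_j+1}\to 0$ and $f$ is l.s.c.\ one more application (plus the penalty/coupling continuity) transfers this to $\lim_j f(x^{k_j})=f(x^*)$, or one simply reindexes and notes $(x^{k_j+1},y^{k_j+1},z^{k_j+1})$ also converges to $(x^*,y^*,z^*)$.

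Adding up the four convergent pieces then gives $\mathcal L^{\alpha}(x^{k_j},y^{k_j},z^{k_j})\to \mathcal L^{\alpha}(x^*,y^*,z^*)$, as required. The main obstacle is exactly the step described above: because no convexity or continuity is assumed on $f$, the equality $\lim f(x^{k_j})=f(x^*)$ cannot come from l.s.c.\ alone and must be extracted from the variational characterization of the $x$-subproblem together with the already-proved facts that the successive differences tend to zero and the iterates are bounded. Everything else is a routine continuity argument.
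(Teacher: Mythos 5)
Your proof follows essentially the same route as the paper's: lower semicontinuity of $f$ gives $f(x^*)\le\liminf_j f(x^{k_j})$, testing the minimality of $\hat f^{k_j}$ at $x=x^*$ and passing to the limit superior gives the reverse inequality, and the coupling, penalty, $g$, and $h$ terms are handled by plain continuity. The one caveat you correctly flag is shared with the paper itself: both arguments quietly rely on $\Delta x^{k_j+1}\to 0$ (so that $x^{k_j+1}\to x^*$), a fact the paper only establishes later under the additional sufficient-decrease Assumption \ref{assumptionB}, even though the lemma is stated under Assumption \ref{assumptionA} alone.
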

\begin{proof}
Let $\{(x^{k_j},y^{k_j},z^{k_j})\}_{j\ge 0}$ be a subsequence of the sequence generated by the 
PL-ADMM algorithm 
such that
 \[
\lim_{j\to\infty} (x^{k_j},y^{k_j},z^{k_j})=(x^*, y^*, z^*).
\]
The function $f$ is lower semicontinuous hence we have
\begin{eqnarray}\label{eq:flsc0}
f(x^*)\le \lim\inf_{j\to\infty} f(x^{k_j}).
\end{eqnarray}
From the $x$-subproblem in (\ref{Alg1}),
we have $\hat f^k(x^{k+1})\le \hat f^k(x)$ for any $x\in\mathbb R^n$.  
Choose $k=k_j$, $\forall j\ge 0$, and let $x=x^*$ to get

\begin{eqnarray*}
&f(x^{k_j+1})+\Big\langle \nabla g(x^{k_j})+\alpha A^*\big(Ax^{k_j}
+By^{k_j}+\alpha^{-1}z^{k_j}+c\big),\Delta x^{k_j+1}\Big\rangle
+\|\Delta x^{k_j+1}\|^2_{Q_1^{k_j}}&
\\
&\le f(x^*)+\Big\langle \nabla  g(x^{k_j})
+\alpha A^*(Ax^{k_j}+By^{k_j}+\alpha^{-1}z^{k_j}+c),x^*-x^{k_j}\Big\rangle
+\|x^*-x^{k_j}\|^2_{Q_1^{k_j}}.&
\end{eqnarray*}
By the continuity of $\nabla g$ and the fact that the distance between two successive 
iterates approaches zero, taking the limit supremum from the both sides leads to
\begin{eqnarray*}
\begin{array}{lll}
\lim\sup_{j\to \infty} f(x^{k_j+1}) \le f(x^*)+\\
 \lim\sup_{j\to \infty} \Big\{\Big\langle \nabla g(x^{k_j})+\alpha A^*(Ax^{k_j}+By^{k_j}+\alpha^{-1}z^{k_j}+c),x^*-x^{k_j}\Big\rangle+\|x^*-x^{k_j}\|^2_{Q_1^{k_j}}\Big\}.
\end{array}
\end{eqnarray*}
We have $x^{k_j}\to x^*$ as $j\to\infty$ thus the latter inequality  reduces to 
\[
\lim\sup_{j\to \infty} f(x^{k_j+1})\le f(x^*).
\]
Thus, in view of (\ref{eq:flsc0}), we then have
$\lim_{j\to \infty} f(x^{k_j})=f(x^*).$

Since the functions $h(\cdot)$ and $g(\cdot)$ are smooth we  further have
$\lim_{j\to\infty} g(x^{k_j})= g(x^*)$ and $\lim_{j\to\infty} h(y^{k_j})= h(y^*)$.
Thus 
\begin{eqnarray*}
\begin{array}{l}
\lim_{j\to\infty} \mathcal L^{\alpha} (x^{k_j},y^{k_j},z^{k_j})\\
 =\lim_{j\to\infty} 
\Big\{ f(x^{k_j})+g(x^{k_j})+h(y^{k_j})
+\langle z^{k_j}, Ax^{k_j}+By^{k_j}+c\rangle 
+\frac{\alpha}{2}\|Ax^{k_j}+By^{k_j}+c\|^2\Big\}\\
=f(x^*)+g(x^*)+h(y^*)
+\langle z^*, Ax^*+By^*+c\rangle +\frac{\alpha}{2}\|Ax^*+By^*+c\|^2
= \mathcal L^{\alpha} (x^*, y^*, z^*).
\end{array}
\end{eqnarray*}
That completes the proof. $\blacksquare$
\end{proof}

\begin{lemma}\label{thm-cluster}
{\bf (Limit point is critical point)} Suppose that the Assumption \ref{assumptionA} holds. 
Any limit point $(x^*,y^*,z^*)$ of the sequence  $\{(x^k,y^k,z^k)\}_{k\ge 0}$ 
generated by the PL-ADMM algorithm (\ref{Alg1}) is a stationary point. That is,
$0\in\mathcal L^{\alpha} (x^*,y^*,z^*)$, or equivalently

\begin{eqnarray*} 
 0&\in&\partial f(x^*)+\nabla  g(x^*)+A^*z^*,\\
0&=&\nabla h(y^*)+ B^*z^*\\
0&=&Ax^*+By^*+c.
\end{eqnarray*}
\end{lemma}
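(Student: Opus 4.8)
The plan is to pass to the limit in the exact optimality conditions of the three subproblems, which Lemma~\ref{thm-d} has already packaged: for every $k$ there is a bona fide subgradient $d^{k+1}\in\partial\mathcal L^{\alpha}(x^{k+1},y^{k+1},z^{k+1})$ whose norm is controlled by the successive increments. So, given a limit point $(x^*,y^*,z^*)$ and a subsequence with $(x^{k_j},y^{k_j},z^{k_j})\to(x^*,y^*,z^*)$, I would first apply Lemma~\ref{thm-d} with the index shifted by one (i.e.\ take $k+1=k_j$), obtaining $d^{k_j}\in\partial\mathcal L^{\alpha}(x^{k_j},y^{k_j},z^{k_j})$ with $|||d^{k_j}|||\le\rho\big(\|\Delta x^{k_j}\|+\|\Delta y^{k_j}\|+\|\Delta z^{k_j}\|\big)$.

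The next step is to make the right-hand side vanish. This uses the fact, already invoked in the proof of Lemma~\ref{thm-cluster0} and a consequence of the sufficient-decrease estimate together with Theorem~\ref{thmbdd}, that $\|\Delta x^{k}\|\to 0$, $\|\Delta y^{k}\|\to 0$, and $\|\Delta z^{k}\|\to 0$ as $k\to\infty$; in particular the feasibility residual $Ax^{k+1}+By^{k+1}+c=(\alpha\beta)^{-1}\Delta z^{k+1}\to 0$. Hence $d^{k_j}\to 0$, while Lemma~\ref{thm-cluster0} simultaneously gives $\mathcal L^{\alpha}(x^{k_j},y^{k_j},z^{k_j})\to\mathcal L^{\alpha}(x^*,y^*,z^*)$. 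Thus $\big((x^{k_j},y^{k_j},z^{k_j}),d^{k_j}\big)\in{\rm Graph}\,\partial\mathcal L^{\alpha}$ converges to $\big((x^*,y^*,z^*),0\big)$ along with its function values, so the closedness of the graph of the limiting subdifferential recalled in the preliminaries yields $0\in\partial\mathcal L^{\alpha}(x^*,y^*,z^*)$.

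It remains to unpack this inclusion into the three stated relations. Because $f$ depends only on $x$ and the remaining terms of $\mathcal L^{\alpha}$ are smooth, $\partial\mathcal L^{\alpha}(x,y,z)$ splits blockwise as $\big(\partial f(x)+\nabla g(x)+A^*z+\alpha A^*(Ax+By+c),\ \nabla h(y)+B^*z+\alpha B^*(Ax+By+c),\ Ax+By+c\big)$. The $z$-block of $0\in\partial\mathcal L^{\alpha}(x^*,y^*,z^*)$ forces $Ax^*+By^*+c=0$; substituting this into the $x$- and $y$-blocks kills the penalty gradients and leaves $0\in\partial f(x^*)+\nabla g(x^*)+A^*z^*$ and $0=\nabla h(y^*)+B^*z^*$, which is the asserted stationarity.

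I expect the only genuine obstacle to be conceptual, not computational: one cannot pass to the limit in $d^{k_j}\in\partial\mathcal L^{\alpha}(\cdot)$ by mere continuity, since the limiting (Mordukhovich) subdifferential is outer semicontinuous only in the ``attentive'' sense that also requires $\mathcal L^{\alpha}(x^{k_j},y^{k_j},z^{k_j})\to\mathcal L^{\alpha}(x^*,y^*,z^*)$. Supplying this value convergence is exactly the role of Lemma~\ref{thm-cluster0}, which must therefore be established before the closedness step is applied; everything else is a routine use of the subgradient bound and the vanishing of the increments.
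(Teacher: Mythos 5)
Your proposal is correct and follows essentially the same route as the paper's proof: the subgradient bound of Lemma~\ref{thm-d}, the vanishing of the successive increments, the value convergence supplied by Lemma~\ref{thm-cluster0}, and the closedness of the graph of the limiting subdifferential. If anything you are slightly more careful than the paper, which asserts that the increments vanish merely from subsequential convergence rather than from the sufficient-decrease estimate, and which omits the blockwise unpacking of $0\in\partial\mathcal L^{\alpha}(x^*,y^*,z^*)$ that you spell out.
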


\begin{proof}
Let $\{(x^{k_j},y^{k_j},z^{k_j})\}_{j\ge 0}$ be a subsequence of $\{(x^k,y^k,z^k)\}_{k\ge 0}$ such that 
$(x^*, y^*, z^*)=\lim_{j\to\infty} (x^{k_j},y^{k_j},z^{k_j}).$
This follows that  $\|\Delta x^{k_j}\|\to 0$
$\|\Delta y^{k_j}\|\to 0$, and $\|\Delta z^{k_j}\|\to 0$ as $j\to\infty$.
By Lemma \ref{thm-cluster0}, $ \mathcal L^{\alpha} (x^{k_j},y^{k_j},z^{k_j})\to  \mathcal L^{\alpha}(x^*, y^*, z^*)$,  $j\to +\infty$.
Let $d^{k_j}\in\partial \mathcal L^{\alpha} (x^{k_j},y^{k_j},z^{k_j})$, by Lemma \ref{thm-d} 
we have 
$|||d^{k_j}\|||\le \rho (\|\Delta x^{k_j}\|+\|\Delta y^{k_j}\|+\|\Delta z^{k_j}\|)$, 
where $\rho>0$ is an scalar. Since $|||d^{k_j}\|||\to 0$ as $j\to\infty$,
hence $d^{k_j}\to 0$. By the closeness criterion of the limiting sub-differential 
we then have $0\in \partial \mathcal L^{\alpha}(x^*, y^*, z^*)$,
or equivalently, $(x^*, y^*, z^*)\in {\rm crit} (\mathcal L^{\alpha})$.
 $\blacksquare$

\end{proof}
\begin{lemma}\label{descentx}
{\bf (descent of $\mathcal L^{\alpha}$ during $x$ update)} 
Suppose that the Assumption \ref{assumptionA} holds. 
For the sequence $\{(x^k,y^k,z^k)\}_{k\ge 0}$  generated by the PL-ADMM algorithm (\ref{Alg1}) 
we have
\begin{eqnarray}\label{Lagxdiff}
\mathcal L^{\alpha}(x^k,y^k,z^k)-\mathcal L^{\alpha}(x^{k+1},y^{k},z^{k})
\ge \frac 12 \|\Delta x^{k+1}\|^2_{A^k},
\end{eqnarray}
where 
$A^k= { Q_1^k} -\Big(\alpha \lambda_{\max}^{A^*A} +L_g\Big) {I_n}$.
\end{lemma}

\begin{proof}
Let $k\ge 0$ be fixed.
From the $x$ iterate of (\ref{Alg1}) it is clear that 
$\hat f^k(x^{k+1})\le \hat f^k(x)$ for any $x\in\mathbb R^n.$
Setting $x=x^k$ gives%
\begin{eqnarray} \label{eq1}
&f(x^{k+1})-f(x^k)+\big\langle \nabla g(x^k),\Delta x^{k+1}\big\rangle
+\frac 12 \|\Delta x^{k+1}\|^2_{Q_1^k} \nonumber&\\
&\le 
-\alpha\big\langle  A^*(Ax^k+By^k+\alpha^{-1}z^k+c),\Delta x^{k+1}\big\rangle.&
\end{eqnarray}
We next consider
\begin{eqnarray*}
&\mathcal L^{\alpha}(x^k,y^k,z^k)-\mathcal L^{\alpha}(x^{k+1},y^{k},z^{k})
=
f({x^k})-f(x^{k+1}) +
g(x^k)-g(x^{k+1})&\\
&-\langle z^k,A\Delta x^{k+1})\rangle
+\frac{\alpha}{2}\|Ax^k+By^k+c\|^2
-\frac{\alpha}{2}\|Ax^{k+1}+By^k+c\|^2&\\
&=f({x^k})-f(x^{k+1})
+g(x^k)-g(x^{k+1})
-\langle z^k,A\Delta x^{k+1}\rangle
&
\\
&+\frac{\alpha}{2}\|A(x^{k}-x^{k+1})\|^2
-\alpha\big\langle  A^*(Ax^{k+1}+By^k+c),\Delta x^{k+1}\big\rangle.&
\end{eqnarray*}
By  (\ref{eq1}) we then have 
\begin{eqnarray*}
&\mathcal L^{\alpha}(x^k,y^k,z^k)-\mathcal L^{\alpha}(x^{k+1},y^{k},z^{k}) \ge& \\
& g(x^k)-g(x^{k+1})
+\big\langle \nabla g(x^k),\Delta x^{k+1}\big\rangle
+\frac{\alpha}{2}\|A\Delta x^{k+1}\|^2  +\frac 12 \|\Delta x^{k+1}\|^2_{Q_1^k}.
\end{eqnarray*}
Since $g$ is $L_g$ Lipschitz continues this yields 
\begin{eqnarray*}\label{Lagxdiff}
\mathcal L^{\alpha}(x^k,y^k,z^k)-\mathcal L^{\alpha}(x^{k+1},y^{k},z^{k})
\ge
\frac 12 \|\Delta x^{k+1}\|^2_{Q_1^k}-\frac{\alpha}{2}\|A\Delta x^{k+1}\|^2 -\frac{L_g}{2}\|\Delta x^{k+1}\|^2.
\end{eqnarray*}
That completes the proof. $\blacksquare$

\end{proof}

\begin{lemma}\label{descenty}
{\bf (descent of $\mathcal L^{\alpha}$ during $y$ update)} 
Suppose that the Assumption \ref{assumptionA} holds. 
For the sequence $\{(x^k,y^k,z^k)\}_{k\ge 0}$  generated by the PL-ADMM algorithm (\ref{Alg1}) 
we have
\begin{eqnarray}\label{Lagydiff}
\mathcal L^{\alpha}(x^{k+1},y^k,z^k)-\mathcal L^{\alpha}(x^{k+1},y^{k+1},z^{k})
\ge \|\Delta y^{k+1}\|_{B^k}^2, 
\end{eqnarray}
where 
${B^k}= {Q_2^k} + (\alpha \lambda_{+}^{B^*B} -L_h){I_m}$, and $\lambda_{+}^{B^*B}$ is the smallest positive
eigenvalue of $B^*B$.
\end{lemma}

\begin{proof}
Let $k\ge 1$ be fixed. From the optimality condition of $y$ subproblem in (\ref{Alg1})
we have
\begin{eqnarray*}
\nabla h(y^{k})+\alpha B^*(By^{k+1}+Ax^{k+1}+\alpha^{-1}z^k+c)+Q_2^k\Delta y^{k+1}=0
\end{eqnarray*}
Multiply this equation by $\Delta y^{k+1}$ and rearrange to obtain
\begin{eqnarray}\label{eq:2}
- \alpha\langle B^*(By^{k+1}+Ax^{k+1}+\alpha^{-1}z^k+c),\Delta y^{k+1}\rangle 
=\langle \nabla h(y^{k}),\Delta y^{k+1}\rangle +\|\Delta y^{k+1}\|_{Q_2^k}^2.
\end{eqnarray}
We next consider
\begin{eqnarray*}
&\mathcal L^{\alpha}(x^{k+1},y^k,z^k)-\mathcal L^{\alpha}(x^{k+1},y^{k+1},z^{k})=&\\
& h(y^k)-h(y^{k+1})
+\frac{\alpha}{2}\|By^k+Ax^{k+1}+c\|^2
-\frac{\alpha}{2}\|By^{k+1}+Ax^{k+1}+c\|^2-\langle z^k, B\Delta y^{k+1} \rangle=&\\
& h(y^k)-h(y^{k+1})
+\frac{\alpha}{2}\|B\Delta y^{k+1}\|^2
-\alpha \langle \Delta y^{k+1}, B^*(By^{k+1}+Ax^{k+1}+\alpha^{-1}z^k+c)\rangle =&\\
&h(y^k)-h(y^{k+1})+\frac{\alpha}{2}\|B\Delta y^{k+1}\|^2
+\langle \nabla h(y^{k}),\Delta y^{k+1}\rangle +\|\Delta y^{k+1}\|_{Q_2^k}^2,&
\end{eqnarray*}
where the last equation is obtained by (\ref{eq:2}). Since $\nabla h$ is $L_h$ Lipschitz continuous we then get 
\begin{eqnarray*}
\mathcal L^{\alpha}(x^{k+1},y^k,z^k)-\mathcal L^{\alpha}(x^{k+1},y^{k+1},z^{k})
\ge   \frac 12 \|\Delta y^{k+1}\|^2_{Q_2^k}
+\frac{\alpha}{2}\|B\Delta y^{k+1}\|^2
-\frac{L_h}{2}\|\Delta y^{k+1}\|^2. 
\end{eqnarray*}
This completes the proof. $\blacksquare$
\end{proof}

\begin{lemma}\label{thm1}
Suppose that the Assumption \ref{assumptionA} holds. 
For the sequence $\{(x^k,y^k,z^k)\}_{k\ge 0}$  generated by the PL-ADMM algorithm (\ref{Alg1}) 
we have
\begin{eqnarray}\label{LagMon}
\mathcal L^{\alpha}(x^{k+1},y^{k+1},z^{k+1})
+\Big\|\Delta x^{k+1}\Big\|^2_{ A^k}
+\Big\|\Delta y^{k+1}\Big\|^2_{B^k} \le \mathcal L^{\alpha}(x^{k},y^k,z^k)
+\frac{1}{\alpha\beta}\Big\|\Delta z^{k+1}\Big\|^2,
\end{eqnarray}
where 
$A^k= { Q_1^k} -\big(\alpha \lambda_{\max}^{A^*A} +L_g\big) {I_n}$
and 
${B^k}= {Q_2^k} + (\alpha \lambda_{+}^{B^*B} -L_h){I_m}.$

\end{lemma}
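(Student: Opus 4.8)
The plan is to split the one-step change of $\mathcal L^{\alpha}$ into the contributions of the three partial updates in (\ref{Alg1}) and estimate each piece from the results already at hand. Concretely, write the telescoping decomposition
\[
\mathcal L^{\alpha}(x^k,y^k,z^k)-\mathcal L^{\alpha}(x^{k+1},y^{k+1},z^{k+1}) = T_1 + T_2 + T_3,
\]
where $T_1 := \mathcal L^{\alpha}(x^k,y^k,z^k)-\mathcal L^{\alpha}(x^{k+1},y^k,z^k)$ is the decrease produced by the $x$-update, $T_2 := \mathcal L^{\alpha}(x^{k+1},y^k,z^k)-\mathcal L^{\alpha}(x^{k+1},y^{k+1},z^k)$ that produced by the $y$-update, and $T_3 := \mathcal L^{\alpha}(x^{k+1},y^{k+1},z^k)-\mathcal L^{\alpha}(x^{k+1},y^{k+1},z^{k+1})$ that produced by the multiplier update.

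For $T_1$ and $T_2$ I would simply invoke the two descent lemmas: Lemma~\ref{descentx} bounds $T_1$ below by the quadratic form in $\Delta x^{k+1}$ governed by $A^k={Q_1^k}+(\alpha\lambda_{\min}^{A^*A}-L_g)I_n$, and Lemma~\ref{descenty} bounds $T_2$ below by the quadratic form in $\Delta y^{k+1}$ governed by $B^k={Q_2^k}+(\alpha\lambda_{\min}^{B^*B}-L_h)I_m$. For $T_3$, note that $z$ enters $\mathcal L^{\alpha}$ only through the linear coupling term $\langle z, Ax+By+c\rangle$, so
\[
T_3 = \langle z^k-z^{k+1},\, Ax^{k+1}+By^{k+1}+c\rangle = -\langle \Delta z^{k+1},\, Ax^{k+1}+By^{k+1}+c\rangle,
\]
and the multiplier step gives exactly $Ax^{k+1}+By^{k+1}+c=(\alpha\beta)^{-1}\Delta z^{k+1}$ (this is precisely (\ref{dzinL})), whence $T_3=-(\alpha\beta)^{-1}\|\Delta z^{k+1}\|^2$. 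Adding the three estimates and rearranging then yields (\ref{LagMon}).

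This lemma carries no genuine obstacle: it is bookkeeping on top of Lemmas~\ref{descentx}--\ref{descenty} together with the identity (\ref{dzinL}). The only feature worth emphasizing is that, unlike the two primal sub-steps, the dual step \emph{increases} $\mathcal L^{\alpha}$, which is why the term $(\alpha\beta)^{-1}\|\Delta z^{k+1}\|^2$ appears with an unfavorable sign on the right-hand side of (\ref{LagMon}). Consequently (\ref{LagMon}) is not yet a sufficient-decrease inequality; restoring one will require, in the subsequent analysis, a companion bound controlling $\|\Delta z^{k+1}\|$ by the primal increments $\|\Delta x^{k+1}\|$ and $\|\Delta y^{k+1}\|$ — using the $y$-optimality condition and the condition $\lambda_{\min}^{BB^*}>0$ from Assumption~\ref{assumptionA} — followed by passing to a suitably regularized Lagrangian. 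That later step, rather than the present one, is where the real work lies.
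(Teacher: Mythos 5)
Your proposal is correct and is essentially the paper's own argument: the paper chains the identity $\mathcal L^{\alpha}(x^{k+1},y^{k+1},z^{k+1})=\mathcal L^{\alpha}(x^{k+1},y^{k+1},z^{k})+\frac{1}{\alpha\beta}\|\Delta z^{k+1}\|^2$ (your $T_3$) with Lemmas~\ref{descenty} and \ref{descentx}, exactly as in your telescoping decomposition. The only caveat is that the constants in front of the quadratic forms are already inconsistent between Lemmas~\ref{descentx}--\ref{descenty} and (\ref{LagMon}) in the paper itself (a factor $\tfrac12$ appears in (\ref{Lagxdiff}) but not in (\ref{LagMon})), so your ``simply invoke the descent lemmas'' inherits that same bookkeeping discrepancy rather than introducing a new one.
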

\begin{proof}
Let $k\ge 1$ be fixed. By the $z$ update in (\ref{Alg1}) and Lemma \ref{descentx} and \ref{descenty} we have 
\begin{eqnarray*}
\mathcal L^{\alpha}(x^{k+1},y^{k+1},z^{k+1})
&=&\mathcal L^{\alpha}(x^{k+1},y^{k+1},z^{k}) + \frac{1}{\alpha\beta}\|\Delta z^{k+1}\|^2\\
&\le& \mathcal L^{\alpha}(x^{k+1},y^{k},z^{k}) -
 \|\Delta y^{k+1}\|^2_{B^k} + \frac{1}{\alpha\beta}\|\Delta z^{k+1}\|^2
 \\
&\le& \mathcal L^{\alpha}(x^{k},y^{k},z^{k})-
 \|\Delta x^{k+1}\|^2_{A^k} -\|\Delta y^{k+1}\|^2_{B^k} + \frac{1}{\alpha\beta}\|\Delta z^{k+1}\|^2.
\end{eqnarray*}
Rearrange to obtain (\ref{LagMon}). $\blacksquare$
\end{proof}

\begin{lemma}\label{thm2}
{\bf (Monotonicity of $\mathcal L^{\alpha}$)} Suppose that the Assumption \ref{assumptionA} holds. 
For the sequence $\{(x^k,y^k,z^k)\}_{k\ge 0}$  generated by the PL-ADMM algorithm (\ref{Alg1}) 
the following inequalities hold
\begin{eqnarray*}\label{zdiff-square}
(a)\quad\quad\quad\frac{1}{\alpha\beta}\Big\|\Delta z^{k+1}\Big\|^2
\le \theta_0  \|\Delta y^k\|^2
+ \theta_1\|\Delta y^{k+1}\|^2
+\gamma_0 \Big\|B^*\Delta z^{k}\Big\|^2
-\gamma_0\| B^*\Delta z^{k+1}\Big\|^2
\end{eqnarray*}
\begin{eqnarray*}\label{MonLag2}
\begin{array}{ccc}
&(b)\quad\quad\quad\mathcal L^{\alpha}\big(x^{k+1},y^{k+1},z^{k+1}\big)
+\big\|\Delta x^{k+1}\big\|^2_{{A^k}}
+\big\|\Delta y^{k+1}\big\|^2_{B^k-r\theta_1{I_m}}
+\displaystyle{\frac{r-1}{\alpha\beta}}\big\|\Delta z^{k+1}\big\|^2
&\\[.1in]
&
+r\gamma_0 \big\|B^*\Delta z^{k+1}\big\|^2
\le \mathcal L^{\alpha}\big(x^{k},y^k,z^k\big)
+r\gamma_0 \big\|B^*\Delta z^k\big\|^2
+r\theta_0 \big\|\Delta y^k\big\|^2,& 
\end{array}
\end{eqnarray*}
where $r >1$, 
\begin{eqnarray}\label{param}
\theta_0:=  \frac{2\beta (L_h+q_2)^2}{\alpha \lambda_{+}^{B^*B} \big(1-|1-\beta|\big)^2},\;
\theta_1:= \frac{2\beta q_2^2}{\alpha\lambda_{+}^{B^*B} \big(1-|1-\beta|\big)^2 }\;
\gamma_0:=\frac{ |1-\beta|}{\alpha\beta \lambda_{+}^{B^*B} \big(1-|1-\beta|\big)}.
\end{eqnarray}

\end{lemma}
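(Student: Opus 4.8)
\medskip
\noindent\textbf{Proof plan.} Fix $k\ge 1$. The estimate (\ref{zdiff-square}) will be extracted from the optimality condition of the $y$-subproblem of (\ref{Alg1}) written at two consecutive iterations, and (\ref{MonLag2}) will then follow by feeding (\ref{zdiff-square}) into Lemma \ref{thm1}. First I would rewrite the $y$-optimality condition $\nabla h(y^k)+\alpha B^*(Ax^{k+1}+By^{k+1}+\alpha^{-1}z^k+c)+Q_2^k\Delta y^{k+1}=0$: since the $z$-update gives $Ax^{k+1}+By^{k+1}+c=\tfrac1{\alpha\beta}\Delta z^{k+1}$, this becomes $\nabla h(y^k)+\tfrac1\beta B^*\Delta z^{k+1}+B^*z^k+Q_2^k\Delta y^{k+1}=0$. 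Writing the same identity with $k$ replaced by $k-1$, subtracting, and using $z^k-z^{k-1}=\Delta z^k$, one obtains after multiplying by $\beta$
\[
B^*\Delta z^{k+1}=(1-\beta)\,B^*\Delta z^k-\beta\bigl(\nabla h(y^k)-\nabla h(y^{k-1})\bigr)-\beta Q_2^k\Delta y^{k+1}+\beta Q_2^{k-1}\Delta y^k.
\]

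Denote by $E$ the sum of the last three terms, so $B^*\Delta z^{k+1}=(1-\beta)B^*\Delta z^k+E$. Because $\beta\in(0,2)$ we have $|1-\beta|<1$, and the elementary estimate $\|\lambda a+b\|^2\le|\lambda|\,\|a\|^2+(1-|\lambda|)^{-1}\|b\|^2$ for $|\lambda|<1$ (Young's inequality with parameter $1-|\lambda|$) yields $\|B^*\Delta z^{k+1}\|^2\le|1-\beta|\,\|B^*\Delta z^k\|^2+(1-|1-\beta|)^{-1}\|E\|^2$. Grouping $E=\beta\bigl(Q_2^{k-1}\Delta y^k-(\nabla h(y^k)-\nabla h(y^{k-1}))\bigr)-\beta Q_2^k\Delta y^{k+1}$, using the triangle inequality and Assumption \ref{assumptionA}(A3)--(A4) (so that $\|Q_2^{k-1}\Delta y^k\|\le q_2\|\Delta y^k\|$, $\|\nabla h(y^k)-\nabla h(y^{k-1})\|\le L_h\|\Delta y^k\|$, $\|Q_2^k\Delta y^{k+1}\|\le q_2\|\Delta y^{k+1}\|$) together with $(s+t)^2\le 2s^2+2t^2$, gives $\|E\|^2\le 2\beta^2(L_h+q_2)^2\|\Delta y^k\|^2+2\beta^2q_2^2\|\Delta y^{k+1}\|^2$.

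Next I would pass from $\|B^*\Delta z^{k+1}\|$ to $\|\Delta z^{k+1}\|$. Assumption \ref{assumptionA}(A2) gives $\|B^*v\|^2=\langle v,BB^*v\rangle\ge\lambda_{\min}^{BB^*}\|v\|^2$, so $\tfrac1{\alpha\beta}\|\Delta z^{k+1}\|^2+\gamma_0\|B^*\Delta z^{k+1}\|^2\le\bigl(\tfrac1{\alpha\beta\lambda_{\min}^{BB^*}}+\gamma_0\bigr)\|B^*\Delta z^{k+1}\|^2$; with $\gamma_0$ as in (\ref{param}) a direct computation shows the bracket equals $\bigl(\alpha\beta\lambda_{\min}^{BB^*}(1-|1-\beta|)\bigr)^{-1}$, and $|1-\beta|$ times this constant is again $\gamma_0$. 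Multiplying the two displayed bounds for $\|B^*\Delta z^{k+1}\|^2$ by this constant and collecting terms produces
\[
\tfrac1{\alpha\beta}\|\Delta z^{k+1}\|^2+\gamma_0\|B^*\Delta z^{k+1}\|^2\le\gamma_0\|B^*\Delta z^k\|^2+\theta_0\|\Delta y^k\|^2+\theta_1\|\Delta y^{k+1}\|^2,
\]
with $\theta_0,\theta_1$ as in (\ref{param}); since $\theta_1\le\theta_0$ this is precisely (\ref{zdiff-square}). Finally, for (\ref{MonLag2}) I would add $\tfrac r{\alpha\beta}\|\Delta z^{k+1}\|^2$ to both sides of (\ref{LagMon}): the left side then carries $\tfrac{r-1}{\alpha\beta}\|\Delta z^{k+1}\|^2$ and the right side $\tfrac r{\alpha\beta}\|\Delta z^{k+1}\|^2$, which I bound by $r$ times (\ref{zdiff-square}); transferring the $\|\Delta y^{k+1}\|^2$ and $\|B^*\Delta z^{k+1}\|^2$ contributions to the left and writing $\|\Delta y^{k+1}\|^2_{B^k}-r\theta_1\|\Delta y^{k+1}\|^2=\|\Delta y^{k+1}\|^2_{B^k-r\theta_1 I_m}$ gives (\ref{MonLag2}).

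The main obstacle is the step that produces the contraction factor $|1-\beta|<1$ in front of $\|B^*\Delta z^k\|^2$: the Young parameter has to be chosen so that this coefficient is exactly $|1-\beta|$ (not $(1-\beta)^2$), since it is this strict contraction, combined with $\lambda_{\min}^{BB^*}>0$, that later allows the auxiliary term $r\gamma_0\|B^*\Delta z^k\|^2$ to be telescoped in the global convergence argument; this is precisely where the restriction $\beta\in(0,2)$ is needed. Everything else is routine but careful bookkeeping of the constants $\theta_0,\gamma_0,\theta_1$ in (\ref{param}).
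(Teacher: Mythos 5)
Your argument is correct and follows essentially the same route as the paper: the recursion $B^*\Delta z^{k+1}=(1-\beta)B^*\Delta z^{k}+\beta\Delta w^{k+1}$ (your $E$ is exactly $\beta\Delta w^{k+1}$ for the paper's auxiliary vector $w^{k+1}=-Q_2^k\Delta y^{k+1}-\nabla h(y^k)$), the convexity/Young bound producing the contraction factor $|1-\beta|$, the passage from $\|B^*\Delta z^{k+1}\|$ to $\|\Delta z^{k+1}\|$ via $\lambda_{\min}^{BB^*}$, and the final combination with Lemma \ref{thm1} all coincide with the paper's proof, the only cosmetic difference being that you obtain the recursion by differencing two consecutive $y$-optimality conditions rather than by introducing $w^{k+1}$ explicitly. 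One bookkeeping remark that applies equally to your derivation and to the paper's: the coefficient that actually comes out in front of $\|\Delta y^{k}\|^2$ is $2\beta^2(L_h+q_2)^2/\bigl(\alpha\beta\lambda_{\min}^{BB^*}(1-|1-\beta|)^2\bigr)$, which equals the $\theta_0$ of (\ref{param}) only when $\beta\le 1$; for $\beta\in(1,2)$ there is an extra factor $\beta/(2-\beta)>1$ that should be absorbed into the definitions of $\theta_0$ and $\theta_1$.
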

\begin{proof}
Part (a). Let $k\ge 1$ be fixed. We define  the  vector 
\begin{eqnarray}\label{w}
w^{k+1}:=-{Q_2^k}\Delta y^{k+1}-\nabla h(y^k).
\end{eqnarray}
Then 
\[
\Delta w^{k+1}= Q_2^{k-1}\Delta y^{k}-{Q_2^k}\Delta y^{k+1}+ \nabla h(y^{k-1})-\nabla h(y^k),
\] and 
by the triangle inequality we have
\begin{eqnarray}\label{deltaw}
\|\Delta w^{k+1}\| &\le& \|\nabla h(y^{k})-\nabla h(y^{k-1})\|+\|{Q_2^k}\|\|\Delta y^{k+1}\|+\|Q_2^{k-1}\|\|\Delta y^{k}\|
\end{eqnarray}
By the fact that $\nabla h$ is $L_h$ Lipschitz continuous and $q_2=\sup_{k\ge 0}\|{Q_2^k}\|<+\infty$ we obtain
\begin{eqnarray}\label{deltawnorm}
\|\Delta w^{k+1}\| 
\le  (L_h+q_2) \|\Delta y^k\| + q_2 \|\Delta y^{k+1}\|.
\end{eqnarray}
and hence
 \begin{eqnarray}\label{deltawnorm2}
\|\Delta w^{k+1}\|^2 
\le 2 (L_h+q_2)^2 \|\Delta y^k\|^2 +2 q_2^2 \|\Delta y^{k+1}\|^2.
\end{eqnarray}
Expressing the optimality condition of $y$ subproblem using $w^{k+1}$ gives
\[
w^{k+1}=\alpha B^*(Ax^{k+1}+By^{k+1}+c+\alpha^{-1}z^k)
\]
Combining this with the $z$ iterate in (\ref{Alg1}) yields 
\begin{eqnarray}\label{eq6}
B^*z^{k+1}=\beta w^{k+1}+(1-\beta) B^*z^k. 
\end{eqnarray}
This follows that 
\begin{eqnarray*}
B^*\Delta z^{k+1}=\beta\Delta w^{k+1}+(1-\beta) B^*\Delta z^{k}.
\end{eqnarray*}
Since $\beta\in(0,2)$, we can equivalently write this as follows
\begin{eqnarray*}
B^*\Delta z^{k+1}= \big(1-|1-\beta|\big)
\Big(\frac{\beta\Delta w^{k+1}}{1-|1-\beta|} \Big)
+|1-\beta| \Big({\rm sign}(1-\beta) B^*\Delta z^k\Big),
\end{eqnarray*}
where ${\rm sign} (\lambda)=1$ if $\lambda\ge 0$ and ${\rm sign} (\lambda)=-1$ if $\lambda<0$.
By the convexity of $\|\cdot\|^2$ 
we have
\begin{eqnarray*}
\lambda_{+}^{B^*B} \Big(1-|1-\beta|\Big)\Big\|\Delta z^{k+1}\Big\|^2
&\le& \Big(1-|1-\beta|\Big)  \Big\| B^*\Delta z^{k+1}\Big\|^2\\
&\le& \frac{\beta^2}{1-|1-\beta|}  \Big\|\Delta w^{k+1}\Big\|^2+|1-\beta| \Big\|B^*\Delta z^{k}\Big\|^2-|1-\beta| \Big\| B^*\Delta z^{k+1}\Big\|^2.
\end{eqnarray*}
Multiply the both sides of the latter inequality by 
\[
\frac{1}{\alpha\beta \lambda_{+}^{B^*B} \Big(1-|1-\beta|\Big)      }
\]
and replacing (\ref{deltawnorm2}) for $\|\Delta w^{k+1}\|^2$ to obtain 
\begin{eqnarray*}
\frac{1}{\alpha\beta}\Big\|\Delta z^{k+1}\Big\|^2
&\le& \frac{2\beta (L_h+q_2)^2}{\alpha \lambda_{+}^{B^*B} \Big(1-|1-\beta|\Big)^2 }  \|\Delta y^k\|^2\nonumber\\
&&+\frac{2\beta q_2^2}{\alpha \lambda_{+}^{B^*B} \Big(1-|1-\beta|\Big)^2 } \|\Delta y^{k+1}\|^2 \nonumber  \\
&&+\frac{ |1-\beta|}{\alpha\beta \lambda_{+}^{B^*B} \Big(1-|1-\beta|\Big) } \Big\|B^*\Delta z^{k}\Big\|^2\nonumber\\
&&-\frac{ |1-\beta|}{\alpha\beta \lambda_{+}^{B^*B} \Big(1-|1-\beta|\Big) }\Big\| B^*\Delta z^{k+1}\Big\|^2.\nonumber
\end{eqnarray*}

Part (b). Multiply (a) by $r>1$ and combine it with (\ref{LagMon})  to obtain the result. 
$\blacksquare$
\end{proof}
%

\section{Regularized Augmented Lagrangian}\label{sec4}

The regularized Augmented Lagrangian functional is defined by 
\begin{eqnarray}
&\mathcal R:\mathbb R^n\times \mathbb R^m\times \mathbb R^p\times \mathbb R^m\times \mathbb R^p
\to (-\infty,+\infty] &  \nonumber\\[.1in]
&\mathcal R(x,y,z,y',z')=\mathcal L^{\alpha}(x,y,z)+
r \gamma_0 \Big\|B^*(z-z')\Big\|^2+
r  \theta_0 \Big\|y-y'\Big\|^2,&\label{eq9}
\end{eqnarray}
where $r>1$, and $\theta_0$ and $\gamma_0$ are as in (\ref{param}).
For any $k\ge 1$, we denote
\begin{eqnarray}\label{eq10}
\mathcal R_k:=\mathcal R(x^k,y^k,z^k,y^{k-1},z^{k-1}) 
=\mathcal L^{\alpha}(x^k,y^k,z^k)+
r \gamma_0 \|B^*\Delta z^k\|^2+
 r\theta_0 \|\Delta y^k\|^2. 
\end{eqnarray}
By (\ref{MonLag2}) we then have
\begin{eqnarray}\label{eq:mdr}
\mathcal R_{k+1}
+\|\Delta x^{k+1}\|^2_{A^k}+\|\Delta y^{k+1}\|^2_{D^k}+\frac{r-1}{\alpha\beta}\|\Delta z^{k+1}\|^2
\le \mathcal R_k.
\end{eqnarray}
where $D^k:= B^k-r(\theta_0+\theta_1) {I_m}.$

\begin{assumption}\label{assumptionB}
{\bf (Sufficient decrease condition)}\\
\begin{itemize}
\item [(i)] The parameters $\alpha>0, \beta \in (0,2), r>1$, and the symmetric positive semidefinite matrices $\{Q_1^k\}_{k\ge 0}$ and $\{Q_2^k\}_{k\ge 0}$ are chosen such that there exists  $\sigma_1>0$ and $\sigma_2>0$ such that 
\[
q_1^{-}-\alpha \lambda_{\max}^{A^*A}-L_g\ge \sigma_1
\quad
{\rm and}
\quad
q_2^{-}+\alpha \lambda_{+}^{B^*B} - (L_h+r\theta_0+r\theta_1)\ge \sigma_2.
\]
We then let  
\[
\sigma=\min\{\sigma_1,\sigma_2, \frac {r-1}{\alpha\beta}\}.
\]
By this assumption, for all $k\ge 1$ 
 \begin{eqnarray}\label{eq:mdr11}
\mathcal R_{k+1}
+\sigma \Big(\|\Delta x^{k+1}\|^2+\|\Delta y^{k+1}\|^2+\|\Delta z^{k+1}\|^2\Big)
\le \mathcal R_k\le \mathcal R_1.
 \end{eqnarray}
\item [(ii)] {\bf (Relaxed Version)}
The conditions in the assumption can be relaxed. 
Chosen the parameters $\alpha>0, \beta \in (0,2), r>1$,
suppose that the symmetric positive semidefinite  matrices$\{Q_1^k\}_{k\ge 0}$ and $\{Q_2^k\}_{k\ge 0}$  are chosen such that after a finite number of iterations $k_0\ge 0$ we have
\[
\sigma_k=\inf_{k\ge k_0}\Big\{\|Q_1^k\|-\alpha\lambda_{\max}^{A^*A}-L_g,
\|Q_2^k\|+\alpha \lambda_+^{B^*B} - (L_h+r\theta_0+r\theta_1),   \frac{r-1}{\alpha\beta}  \Big \}>\sigma>0.
\]
With this, we would then have 
 \begin{eqnarray}\label{eq:mdr11-r}
\mathcal R_{k+1}
+\sigma \Big(\|\Delta x^{k+1}\|^2+\|\Delta y^{k+1}\|^2+\|\Delta z^{k+1}\|^2\Big)
\le \mathcal R_k\le \mathcal R_{k_0},\quad \forall k\ge k_0. 
 \end{eqnarray}
 \end{itemize}
\end{assumption}
\begin{remark}
We make a few remarks regarding to Assumption \ref{assumptionB}.
\begin{itemize}
\item [(i)] The symmetric and positive semidefinite matrices $Q_1^k$ and $Q_2^k$ 
can be chosen fixed for all $k\ge 0$. 
In particular, they can be chosen a multiple of an identity matrix, that is, $Q_1=\eta I_n$ and $Q_2=\mu I_m$ with 
$\eta>\alpha\lambda_{\max}^{A^*A}+L_g$ and $\mu\ge 0$. 
%
\item [(ii)] By Assumption 2 (i) we observe that it is necessary to have a nonzero proximal term in the $x$ subproblem, so that we have 
$q_1^->\alpha \lambda_{\max}^{A^*A}+L_g$.
However, if $B^*B$ is easily invertible the proximal term in the $y$ subproblem can be eliminated
by choosing $Q_2^k$ to be a zero matrix for all $k\ge 0$ ($q_2^-=q_2=0$). Then for any $\beta \in (0,2)$ and $r>1$ the sufficient decrease condition in Assumption 2 (i) becomes
\[
\alpha^2 \lambda_+^{B^*B}-\alpha L_h-r\frac{2\beta L_h^2r}{\lambda_+^{B^*B}\rho(\beta)^2}>0,
\]
where $\rho(\beta)=1-|1-\beta|$ holds if 
\begin{eqnarray}\label{alpha:cond}
\alpha>\Big(1+\sqrt{1+8\beta r/\rho(\beta)^2}\Big)\frac{L_h}{2\lambda_+^{B^*B}}.
\end{eqnarray}
\item [(iii)]
If the $y$ subproblem in PL-ADMM contains the second order term $\frac{\alpha}{2}y^*B^*By$
and if $B^*B$ is nearly diagonal (or orthogonal), we can replace $B^*B$ by a certain symmetric diagonal (orthogonal) matrix $D$ that approximates $B^*B$. By choosing $Q_2^k=\alpha (D-B^*B)$, the second order term 
  $\frac{\alpha}{2}y^*B^*By$ is replaced by  $\frac{\alpha}{2}y^*Dy$. Note that by Assumption A5 
  we have $q_2^-=\alpha\|D-B^*B\|$, and by (\ref{param}) we have $\theta_0\propto \frac{1}{\alpha}$ and $\theta_1\propto \frac{1}{\alpha}$. Hence, for a fixed $\beta\in(0,2)$ and $r>1$, if $\alpha$ chosen large enough, the relation $q_2^-+\alpha\lambda_+^{B^*B}-(L_h+r\theta_0+r\theta_1)>0$ 
  and hence Assumption 2 (i) holds. 
  \item [(iv)] (Application example)
  Consider the problem 
 \[
 \min_{x\in\mathbb R^n}\;\; 
 r(x)+\mathcal \ell(\mathcal A x-b)
 \] 
 where $\mathcal A:\mathbb R^n\to\mathbb R^m$ and $b\in\mathbb R^m$. 
The first term is the regularization term and the second term is the fitting term.
 Regularizers can be some sparsity-including functions,
 $\ell_q$ quasi-norms for $0<q\le 1$, indicator functions, or any lower-semicontinuous 
 (not necessarily convex) functions satisfying {K\L} conditions. 
The fitting term can be least squares, logistic functions, and other smooth functions.  In order to solve this problem, we formulate it as    
 \[
 \min_{x,y\in\mathbb R^n}\;\; 
 r(x)+\mathcal \ell(\mathcal A y-b)\quad s.t. \;\;\; x-y=0
 \] 
 Comparing to (\ref{OP}), $h(y)=\ell(\mathcal A y-b)$, $g(x)=0$, $A=I$, and $B=-I$.
 Note that  A1-A4 hold. We set $Q_1^k=\eta I$ where $\eta>0$.
 Since $B$ is a multiple of identity, we set $Q_2^k=0$.
 The PL-ADMM algorithm to solve this problem is given as follows
\begin{eqnarray*}
 x^{k+1}&=&\arg\min_{x}\;\; r(x)+ \frac{\eta}{2} \|\alpha\eta^{-1}(x^k-y^k+\alpha^{-1}z^k)+x-x^k\|^2,\\
 y^{k+1}&=&  x^{k+1}-\frac{1}{\alpha}\big(
 \mathcal A^*\nabla \ell(\mathcal Ay^k-b)+B^*z^k \big),\\
 z^{k+1}&=&z^k+\alpha\beta(x^{k+1}-y^{k+1}).
 \end{eqnarray*}
Note that $B^*B=I$, $\lambda_+^{B^*B}=1$. 
If $\ell(.)=\|\;.\;\|^2$, then $L_h=\|\mathcal A\|^2$. To guarantee Assumption 2 (i), 
we need to have $\eta>\alpha$ and (\ref{alpha:cond}). Thus we may choose 
\[
\alpha = \|\mathcal A\|^2 \big(1+\sqrt{1+8\beta r/\rho(\beta)^2}\big),
 \quad \eta =\frac 32 \|\mathcal A\|^2 \big(1+\sqrt{1+8\beta r/\rho(\beta)^2}\big)
\]
 for any $r>1$ and $\beta\in (0,2)$.  
\end{itemize}
\end{remark}
\begin{theorem}\label{thmbdd}
{\bf (bounded sequence)} We assume that Assumption~\ref{assumptionA}, and \ref{assumptionB} (ii) hold.
Then sequence $\{(x^k,y^k,z^k)\}_{k\ge 0}$ generated by the PL-ADMM algorithm (\ref{Alg1}) is bounded.
\end{theorem}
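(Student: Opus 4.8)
The plan is to exploit the monotonicity of the regularized augmented Lagrangian already in hand. By Assumption~\ref{assumptionB}(ii) and inequality (\ref{eq:mdr11-r}), the sequence $\{\mathcal R_k\}_{k\ge k_0}$ is non-increasing, so $\mathcal R_k\le \mathcal R_{k_0}$ for every $k\ge k_0$; since the extra summands added to $\mathcal L^{\alpha}$ in (\ref{eq10}) are nonnegative, this yields the \emph{a priori} upper bound $\mathcal L^{\alpha}(x^k,y^k,z^k)\le \mathcal R_{k_0}$, i.e.
\[
f(x^k)+g(x^k)+h(y^k)+\langle z^k,u^k\rangle+\tfrac{\alpha}{2}\|u^k\|^2\le \mathcal R_{k_0},\qquad u^k:=Ax^k+By^k+c .
\]
Completing the square in $z^k$ gives $\langle z^k,u^k\rangle+\tfrac{\alpha}{2}\|u^k\|^2\ge -\tfrac{1}{2\alpha}\|z^k\|^2$, so the entire problem reduces to controlling $\|z^k\|$; once that is done, $f(x^k)+g(x^k)\le \mathcal R_{k_0}-\inf h+\tfrac1{2\alpha}\|z^k\|^2$ together with the coercivity of $f$ and $g$ (Assumption~\ref{assumptionA}, A1) bounds $\{x^k\}$, and the affine constraint bounds $\{y^k\}$.

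The decisive ingredient is a recursion for the multiplier. First, Assumption~\ref{assumptionA}(A2) forces $B$ to be a \emph{nonsingular square} matrix: $\lambda_{\min}^{B^*B}>0$ makes $B$ injective ($m\le p$) and $\lambda_{\min}^{BB^*}>0$ makes it surjective ($p\le m$), hence $m=p$, $\lambda_{\min}^{B^*B}=\lambda_{\min}^{BB^*}$, $\|By\|\ge (\lambda_{\min}^{B^*B})^{1/2}\|y\|$ and $\|(B^*)^{-1}\|=(\lambda_{\min}^{BB^*})^{-1/2}$. Second, exactly as in the derivation of (\ref{eq6}) inside the proof of Lemma~\ref{thm2}, the optimality condition of the $y$-subproblem together with the $z$-update gives
\[
z^k=(1-\beta)\,z^{k-1}+\beta\,(B^*)^{-1}w^k,\qquad w^k=-Q_2^{k-1}\Delta y^k-\nabla h(y^{k-1}),
\]
so that $\|z^k\|\le |1-\beta|\,\|z^{k-1}\|+\beta(\lambda_{\min}^{BB^*})^{-1/2}\big(q_2\|\Delta y^k\|+\|\nabla h(y^{k-1})\|\big)$. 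Since $\beta\in(0,2)$ we have $|1-\beta|<1$, so this is a \emph{contractive} recursion; for the forcing term I would use the elementary inequality $\|\nabla h(y)\|^2\le 2L_h\big(h(y)-\inf h\big)$ (apply the Descent Lemma (\ref{Lip}) to $h$ at the point $y-L_h^{-1}\nabla h(y)$; valid by A1 and A4), and then the \emph{a priori} bound of the first paragraph to dominate $h(y^{k-1})-\inf h$ by $\tfrac1{2\alpha}\|z^{k-1}\|^2$ plus a constant.

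Substituting these estimates back produces an inequality $\|z^k\|\le \mu\,\|z^{k-1}\|+\kappa\,\|\Delta y^k\|+C$ with effective contraction factor $\mu=|1-\beta|+\beta\sqrt{L_h/(\alpha\lambda_{\min}^{BB^*})}$ (and, in the squared version, the leftover $\|\Delta y^k\|^2$ and $\|B^*\Delta z^k\|^2$ terms can be absorbed by the genuinely positive summands $r\theta_0\|\Delta y^k\|^2$ and $r\gamma_0\|B^*\Delta z^k\|^2$ of $\mathcal R_k$, using the explicit values in (\ref{param}) and $r>1$). The parameter restrictions of Assumption~\ref{assumptionB} — notably $q_2^{-}+\alpha\lambda_{\min}^{B^*B}-(L_h+\theta_0+\theta_1)\ge\sigma_2$, recalling $\lambda_{\min}^{B^*B}=\lambda_{\min}^{BB^*}$ — are designed precisely so that $\mu<1$. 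To close the argument one still needs $\{\Delta y^k\}$ bounded, and here the three sequences are coupled: $\|\Delta y^k\|\le\|y^k\|+\|y^{k-1}\|$ with $By^k=u^k-Ax^k-c$ so $\|y^k\|\lesssim \|u^k\|+\|x^k\|+1$ and $\|u^k\|=\tfrac1{\alpha\beta}\|\Delta z^k\|\lesssim \|z^k\|+\|z^{k-1}\|$, while $\{x^k\}$ is bounded via coercivity only once $\|z^k\|$ is. I would therefore run a \emph{simultaneous induction}, propagating a uniform bound on $\|z^k\|$ (equivalently on $\|x^k\|+\|y^k\|+\|z^k\|$) along $k$: the strict inequality $|1-\beta|<1$ and the sufficient-decrease conditions of Assumption~\ref{assumptionB} guarantee that the composite multiplicative constant stays below $1$, so the induction contracts and yields a uniform bound; boundedness of $\{x^k\}$, then $\{u^k\}$, then $\{y^k\}$ follows. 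The main obstacle is exactly this constant-chasing in the coupled induction — verifying that the composite contraction factor stays $<1$ under precisely the stated hypotheses; everything else is completing-the-square and triangle-inequality bookkeeping already rehearsed in Lemmas~\ref{thm-d}--\ref{thm2}.
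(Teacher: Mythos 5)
Your opening reduction (monotonicity of $\mathcal R_k$, completing the square in $z$, coercivity of $f+g$ once $\|z^k\|$ is controlled) matches the paper's. But the core of your argument --- a contractive recursion $\|z^k\|\le\mu\|z^{k-1}\|+\kappa\|\Delta y^k\|+C$ closed by a simultaneous induction --- is not what the paper does, and as written it has a genuine gap. Your composite factor $\mu=|1-\beta|+\beta\sqrt{L_h/(\alpha\lambda_{\min}^{BB^*})}$ being strictly less than $1$ is nowhere established: Assumption~\ref{assumptionB}(ii) is an inequality involving $q_2^-$, $\theta_0$, $\theta_1$ and is not stated as (nor obviously equivalent to) the bound $L_h<\alpha\lambda_{\min}^{BB^*}\big((1-|1-\beta|)/\beta\big)^2$ that your contraction needs; for $\beta$ close to $2$ this required bound becomes very stringent. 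Worse, your forcing term $\kappa\|\Delta y^k\|$ is itself only controllable through $\|y^k\|$, hence through $\|z^k\|$ and $\|x^k\|$, which feeds additional multiples of $\|z^k\|$ back into the right-hand side; the ``constant-chasing'' you defer is exactly where the argument can fail, and you give no mechanism forcing the total coefficient below $1$. An argument whose validity hinges on an unverified quantitative contraction is not a proof.

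The paper sidesteps the recursion entirely by two devices you should note. First, instead of iterating $B^*z^{k+1}=\beta w^{k+1}+(1-\beta)B^*z^k$, it rewrites this as $\beta B^*z^{k+1}=\beta w^{k+1}+(1-\beta)B^*(z^k-z^{k+1})$ and applies convexity of $\|\cdot\|^2$, so that the ``memory'' enters only through the \emph{difference} $\|B^*\Delta z^{k+1}\|^2$ --- a quantity already present (with coefficient $r\gamma_0$, $r>1$) as a positive summand of $\mathcal R_{k+1}$ and therefore absorbable without any induction over $k$. This yields the one-shot bound $-\frac{1}{2\alpha}\|z^{k+1}\|^2\ge-\vartheta\|\nabla h(y^{k+1})\|^2-\theta_2\|\Delta y^{k+1}\|^2-\gamma_1\|B^*\Delta z^{k+1}\|^2$. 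Second, rather than estimating $\|\nabla h(y)\|^2\le 2L_h(h(y)-\inf h)$ and then re-introducing $\|z\|^2$ via the a priori bound on $h(y^{k-1})$ (which is what creates your circular coupling), the paper keeps the combination $h(y)-\vartheta\|\nabla h(y)\|^2$ intact and shows directly, via the descent lemma applied at $y-\delta\nabla h(y)$ with $\delta-\tfrac{L_h}{2}\delta^2=\vartheta$, that $\inf_y\{h(y)-\vartheta\|\nabla h(y)\|^2\}>-\infty$. After these two absorptions the surviving inequality bounds $f(x^{k+1})+g(x^{k+1})+\frac{\alpha}{2}\|Ax^{k+1}+By^{k+1}+\alpha^{-1}z^{k+1}+c\|^2$ plus nonnegative terms by a constant, whence $\{x^k\}$ is bounded by coercivity, $\{y^k\}$ by injectivity of $B$ and the $z$-update, and $\{z^k\}$ from the remaining bounded expression. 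If you want to salvage your route, you must either prove the quantitative contraction from the stated hypotheses or adopt the paper's difference-based absorption.
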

\begin{proof}
Let $\{(x^k,y^k,z^k)\}_{k\ge 0}$ be a generated by the PL-ADMM algorithm. By 
(\ref{eq:mdr11-r}) there exists a $k_0\ge 0$ such that 
$\mathcal R_{k+1}\le \mathcal R_{k_0}$ for all $k\ge k_0$. Hence 
\begin{eqnarray}\label{ineq10}
\begin{array}{c}
f(x^{k+1})+g(x^{k+1})+h(y^{k+1})
+\frac{\alpha}{2}\Big\|Ax^{k+1}+By^{k+1}
+\alpha^{-1}z^{k+1}+c\Big\|^2-\frac{1}{2\alpha}\Big\|z^{k+1}\Big\|^2
\\[.1in]
+\sigma \|\Delta x^{k+1}\|^2+
(\sigma+r\theta_0) \|\Delta y^{k+1}\|^2+
\sigma \|\Delta z^{k+1}\|^2
+r\gamma_0\|B^*\Delta z^{k+1}\|^2
\le R_{k_0}.
 \end{array}
\end{eqnarray}
We will next find a lower bound for $-\frac{1}{2\alpha}\|z^{k+1}\|^2$. 
Given $w^{k+1}$ as in (\ref{w}), we rearrange (\ref{eq6}) to obtain
\[
\beta B^*z^{k+1}=\beta w^{k+1}+(1-\beta) B^*(z^k-z^{k+1}).
\]
Since $\beta\in(0,2)$ we can rewrite this equation as follows
\begin{eqnarray*}
\beta B^*z^{k+1}= (1-|1-\beta|) \Big(\frac{\beta w^{k+1}}{1-|1-\beta|}\Big) 
+(|1-\beta|)\Big({\rm sign}(1-\beta) B^*(z^k-z^{k+1})\Big).
\end{eqnarray*}
By the convexity of $\|\cdot\|^2$  we then obtain
\begin{eqnarray}\label{eq8}
\lambda_+^{B^*B}\beta^2 \Big\|z^{k+1}\Big\|^2
\le \frac{\beta^2}{1-|1-\beta|}  \Big\|w^{k+1}\Big\|^2
+|1-\beta| \Big\|B^*\Delta z^{k+1}\Big\|^2
\end{eqnarray}
Use the inequality $\|w^{k+1}\|^2\le 2(q_2+L_h)^2\|\Delta y^{k+1}\|^2+2\|\nabla h(y^{k+1})\|^2$
in (\ref{eq8}), then divide the both sides of the resulting inequality by $-2\alpha\beta^2\lambda_{+}^{B^*B}$ to get 
\begin{eqnarray*}
-\frac{1}{2\alpha}\Big\|z^{k+1}\Big\|^2 
\ge -\vartheta \|\nabla h(y^{k+1})\|^2
-  \theta_2 \|\Delta y^{k+1}\|^2
-\gamma_1 \Big\|B^*\Delta z^{k+1}\Big\|^2,
\end{eqnarray*}
where
\[
\vartheta:= \frac{1}{\alpha(1-|1-\beta|)\lambda_{+}^{B^*B}},\quad 
\theta_2:=\frac{(q_2+L_h)^2}{\alpha(1-|1-\beta|)\lambda_{+}^{B^*B}},\quad
\gamma_1:=\frac{|1-\beta|}{2\alpha\beta^2 \lambda_{+}^{B^*B}}.
\]
Using the latter inequality, (\ref{ineq10}) leads to 
\begin{eqnarray}\label{thmbdd-eq1}
\begin{array}{l}
f(x^{k+1})+g(x^{k+1})+\frac{\alpha}{2}\Big\|Ax^{k+1}+By^{k+1}+\alpha^{-1}z^{k+1}+c\Big\|^2
+(r\theta_0+\sigma-\theta_2)\Big\|\Delta y^{k+1}\Big\|^2\\[.1in]
+\sigma \Big\|\Delta x^{k+1}\Big\|^2
+ (r\gamma_0-\gamma_1) \Big\|B^*\Delta z^{k+1}\Big\|^2
+\sigma\|\Delta z^{k+1}\|^2
\le \mathcal R_{k_0}
-\inf_y\Big\{ h(y)-\vartheta\Big \|\nabla h(y)\Big\|^2 \Big\}.
\end{array}
\end{eqnarray}
By the Assumption \ref{assumptionA}, $\nabla h$ is $L_h$ Lipschitz continuous, then for any $k\ge k_0\ge 0$ and $y\in\mathbb R^m$ it holds
$h(y)\le h(y^k)+\langle \nabla h(y^k),y-y^k\rangle +\frac{L_h}{2}\|y-y^k\|^2.$
If $\delta>0$ be an scalar, setting $y=y^k-\delta \nabla h(y^k)$ yields 
\begin{eqnarray*}
h\Big(y^k-\delta \nabla h(y^k)\Big)\le h(y^k)- \Big(\delta-\frac{L_h\delta^2}{2}\Big)\|\nabla h(y^k)\|^2. 
\end{eqnarray*}
Since $h$ is bounded from below, then we have 
\begin{eqnarray}\label{hbd}
-\infty<\inf\{h(y)- \Big(\delta-\frac{L_h\delta^2}{2}\Big)\|\nabla h(y)\|^2: \; y\in\mathbb R^m\}
\end{eqnarray}
We choose $\delta>0$ such that 
$\vartheta=\delta-\frac{L_h\delta^2}{2}$.  Then (\ref{hbd}) follows that the right hand side of (\ref{thmbdd-eq1}) is finite.
It is easy to verify for any $r>1$ and any $\beta\in(0,2)$, we also have  $r\gamma_0-\gamma_1>0$ and $r\theta_0+\sigma-\theta_1>0$.
 Hence 
\begin{eqnarray}\label{finite}
f(x^{k+1})+g(x^{k+1})+\Big\|Ax^{k+1}+By^{k+1}+\alpha^{-1}z^{k+1}+c\Big\|^2
+\Big\|\Delta y^{k+1}\Big\|^2 +\|\Delta z^k\|^2<+\infty.
\end{eqnarray}
Since $f$ and $g$ are coercive, then  the sequence 
$\{x^k\}_{k\ge k_0}$ and consequently $\{Ax^k\}_{k\ge k_0}$ is  bounded. 
%
By the $z$ iterate of  (\ref{Alg1}) we have 
\[
By^{k+1} = \frac{1}{\alpha\beta} \Delta z^{k+1} -Ax^{k+1}-c.
\]
By (\ref{finite}), $\{\Delta z^k\}_{k\ge 0}$ is bounded and since $B^*B$ is invertible then $\{y^k\}_{k\ge k_0}$ is bounded.
Finally, since $\{Ax^k\}_{k\ge k_0}$ and $\{By^k\}_{k\ge k_0}$ are bounded
and also $\{Ax^{k}+By^{k}+\frac{1}{\alpha}z^{k}\}_{k\ge k_0}$ is bounded, thus 
$\{z^k\}_{k\ge k_0}$ is bounded.

We showed that the sequences $\{x^k\}_{k\ge k_0}$,$\{y^k\}_{k\ge k_0}$, and $\{z^k\}_{k\ge k_0}$
are bounded. Hence, there exists $M_x>0$, $M_y>0$, $M_z>0$ positive scalars such that 
\begin{eqnarray}\label{bound}
\|x^k\|\le M_x,
\quad
\|y^k\|\le M_y,
\quad
\|z^k\|\le M_z,
\quad 
\forall k\ge k_0.
\end{eqnarray}
We denote by
\begin{eqnarray*}
\hat M_x=\max\{\|x^k\|:\; k=0,1, \cdots, k_0-1\},\\
\hat M_y=\max\{\|y^k\|:\; k=0,1, \cdots, k_0-1\},\\
\hat M_z=\max\{\|z^k\|:\; k=0,1, \cdots, k_0-1\}.\\
\end{eqnarray*}
Thus we have
\[
\|x^k\|\le \max\{M_x,\hat M_x\},
\quad
\|y^k\|\le \max\{M_y,\hat M_y\},
\quad
\|z^k\|\le \max\{M_z,\hat M_z\},
\quad 
\forall k\ge 0.
\]
This concludes the proof.  $\blacksquare$
\end{proof}
\begin{remark}
Theorem \ref{thmbdd} was established for Assumption \ref{assumptionB}(ii). 
With Assumption \ref{assumptionB}(i) then (\ref{bound}) holds for $k\ge 0$.

\end{remark}


\begin{lemma}\label{lmma-Rk}
{\bf(Convergence of $\mathcal R_k$)} 
Suppose that the Assumption \ref{assumptionA} and \ref{assumptionB} (i) hold. 
If $\{(x^k,y^k,z^k)\}_{k\ge 0}$ is a sequence generated by the PL-ADMM algorithm (\ref{Alg1})
then the sequence $\{\mathcal R_{k}\}_{k\ge 1}$ is bounded from below and converges.
\end{lemma}
\begin{proof}
Let $k\ge 0$ be fixed. By (\ref{eq10}), we only need to show that 
\[
\mathcal L^{\alpha}(x^k,y^k,z^k)=f(x^k)+g(x^k)+h(y^k)
+\langle z^k, Ax^k+By^k+c\rangle +\frac{\alpha}{2}\|Ax^k+By^k+c\|^2
\]
is bounded from below. Since $\{(x^k,y^k,z^k)\}_{k\ge 0}$ is a bounded sequence by Theorem \ref{thmbdd},
clearly $\langle z^k, Ax^k+By^k+c\rangle$ and $\|Ax^k+By^k+c\|^2$ are bounded for $k\ge 0$.
By Assumption \ref{assumptionA}, $h$ is bounded from below, and 
 since $f$ and $g$ are coercive and $\{x^k\}_{k\ge 0}$ is bounded, then $\{f(x^k)\}_{k\ge 0}$ and $\{g(x^k)\}_{k\ge 0}$ are
 bounded. 
Therefore $\{\mathcal L^{\alpha}(x^k,y^k,z^k)\}_{k\ge 0}$ is bounded from below, and
consequently 
\[
-\infty <\inf\{\mathcal R_{k}:\; k\ge 0\}.
\]
By Assumption \ref{assumptionB} (i), $\{\mathcal R_k\}_{k\ge 1}$ is monotonically decreasing
for all $k\ge 0$. This together with the fact that $\mathcal R_k$ is bounded from below, we conclude that 
$\{\mathcal R_k\}_{k\ge 1}$ is convergent.  $\blacksquare$

\end{proof}

\begin{lemma}\label{consterms}
Suppose that the Assumption \ref{assumptionA} and \ref{assumptionB} (ii) hold. 
If $\{(x^k,y^k,z^k)\}_{k\ge 0}$ is a sequence generated by the PL-ADMM algorithm (\ref{Alg1}),
which assumed to be bounded, we have 
\[
\lim_{k\to \infty} \|\Delta x^{k+1}\|=0,\quad
\lim_{k\to \infty} \|\Delta y^{k+1}\|=0,\quad
\lim_{k\to \infty} \|\Delta z^{k+1}\|=0.
\]
\end{lemma}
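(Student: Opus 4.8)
The plan is to exploit the sufficient decrease inequality (\ref{eq:mdr11-r}) together with the boundedness from below of the sequence $\{\mathcal R_k\}$, and then run a standard telescoping argument. First I would invoke Lemma \ref{lmma-Rk}, whose proof carries over verbatim under Assumption \ref{assumptionB}(ii) once we restrict attention to indices $k\ge k_0$ and use that $\{(x^k,y^k,z^k)\}_{k\ge 0}$ is bounded (Theorem \ref{thmbdd}), that $f$ and $g$ are coercive, and that $h$ is bounded from below by Assumption \ref{assumptionA}. This yields that $\{\mathcal R_k\}_{k\ge k_0}$ is nonincreasing, bounded from below, hence convergent; in particular $\mathcal R_\infty:=\inf_{k\ge k_0}\mathcal R_k>-\infty$.

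Next, fix $N>k_0$ and sum the inequality (\ref{eq:mdr11-r}) over $k=k_0,k_0+1,\dots,N$. The $\mathcal R$-terms telescope, leaving
\[
\sigma\sum_{k=k_0}^{N}\Big(\|\Delta x^{k+1}\|^2+\|\Delta y^{k+1}\|^2+\|\Delta z^{k+1}\|^2\Big)\le \mathcal R_{k_0}-\mathcal R_{N+1}\le \mathcal R_{k_0}-\mathcal R_\infty<+\infty.
\]
Since the right-hand side is independent of $N$ and $\sigma>0$, letting $N\to\infty$ shows that the series $\sum_{k\ge k_0}\big(\|\Delta x^{k+1}\|^2+\|\Delta y^{k+1}\|^2+\|\Delta z^{k+1}\|^2\big)$ is convergent. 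Consequently its general term tends to $0$, which forces each of $\|\Delta x^{k+1}\|\to 0$, $\|\Delta y^{k+1}\|\to 0$, and $\|\Delta z^{k+1}\|\to 0$ as $k\to\infty$.

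There is essentially no hard obstacle here: the substantive work was already done in deriving the descent inequality (\ref{eq:mdr11-r}) and the lower bound on $\mathcal R_k$. The only point requiring mild care is that these facts are available only for $k\ge k_0$ under Assumption \ref{assumptionB}(ii); but since the claim concerns the limit as $k\to\infty$, discarding the finitely many initial indices is harmless, and under Assumption \ref{assumptionB}(i) one may simply take $k_0=0$.
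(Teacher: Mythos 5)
Your argument is correct and is essentially identical to the paper's: both invoke Lemma \ref{lmma-Rk} for the lower bound and convergence of $\{\mathcal R_k\}$, telescope the descent inequality (\ref{eq:mdr11-r}) over $k\ge k_0$ to bound the series of squared increments, and conclude that the general term vanishes. Your remark about discarding the finitely many indices below $k_0$ under Assumption \ref{assumptionB}(ii) is a sensible clarification but not a departure from the paper's route.
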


\begin{proof}
By summing up  (\ref{eq:mdr11-r}) from 
$k=k_0$ to some $K\ge k_0$ we have 
\[
\sum_{k=k_0}^{K}
\|\Delta x^{k+1}\|^2+\|\Delta y^{k+1}\|^2
+\|\Delta z^{k+1}\|^2\le \frac{1}{\sigma}(\mathcal R_{k_0} - \inf_{k\ge 0} \mathcal R_k)
<+\infty.
\]
We let $K$ approach to infinity, and since $\{(x^k,y^k,z^k)\}_{k\ge 0}$ is bounded we have  
\begin{eqnarray*}
\sum_{k\ge 0}
\|\Delta x^{k+1}\|^2+\|\Delta y^{k+1}\|^2+\|\Delta z^{k+1}\|^2<+\infty.
\end{eqnarray*}
This follows that 
$\|\Delta x^{k+1}\|^2+\|\Delta y^{k+1}\|^2+\|\Delta z^{k+1}\|^2\to 0,$ as $k\to\infty$, and
thus
\[\lim_{k\to \infty} \|\Delta x^{k+1}\|+\|\Delta y^{k+1}\|+\|\Delta z^{k+1}\|\to 0.
\]
This finishes the proof. $\blacksquare$

\end{proof}

\begin{lemma}\label{limitps}
{\bf (properties of limit point set)} Let the Assumptions \ref{assumptionA} and \ref{assumptionB} hold. 
For a bounded sequence $\{(x^k,y^k,z^k)\}_{k\ge 0}$ generated by the PL-ADMM algorithm (\ref{Alg1})
the following are true
\begin{itemize}
\item [(i)] The limit point set of the sequence $\{(x^k,y^k,z^k)\}_{k\ge 0}$, denoted 
$\omega \Big((x^k,y^k,z^k)\}_{k\ge 0}\Big)$, is nonempty, connected and compact.
\item [(ii)] $\displaystyle{\lim_{k\to\infty} {\rm dist} \Big [(x^k,y^k,z^k), \omega \Big((x^k,y^k,z^k)\}_{k\ge 0}\Big)\Big] =0}$.
\item [(iii)] $\omega \Big((x^k,y^k,z^k)\}_{k\ge 0}\Big) \subseteq {\rm crit}\; \mathcal L^{\alpha} $.
\end{itemize}
\end{lemma}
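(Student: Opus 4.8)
The plan is to deduce the three claims from the standard topology of bounded sequences whose successive increments vanish, together with results already proved. Throughout I write $u^k := (x^k,y^k,z^k)$; by Lemma \ref{consterms} we have $\|\Delta x^{k+1}\| + \|\Delta y^{k+1}\| + \|\Delta z^{k+1}\| \to 0$, hence $\|u^{k+1} - u^k\| \to 0$, and $\{u^k\}_{k\ge0}$ is bounded by Theorem \ref{thmbdd} (or by hypothesis).

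For part (iii) I would simply invoke Lemma \ref{thm-cluster}: every limit point of $\{u^k\}_{k\ge0}$ is a stationary point of $\mathcal L^\alpha$, so $\omega(\{u^k\}_{k\ge0}) \subseteq {\rm crit}\,\mathcal L^\alpha$; no further argument is needed.

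For part (i), nonemptiness is Bolzano--Weierstrass for the bounded sequence $\{u^k\}_{k\ge0}$, and writing $\omega(\{u^k\}_{k\ge0}) = \bigcap_{k\ge 0} \overline{\{u^j : j \ge k\}}$ exhibits the limit set as a nested intersection of nonempty bounded closed sets, hence it is compact. The only substantive point is connectedness, which I would prove by contradiction: if $\omega(\{u^k\}_{k\ge0})$ were disconnected it would split as $C_1 \cup C_2$ with $C_1,C_2$ nonempty, compact and disjoint, so $\delta := {\rm dist}(C_1,C_2) > 0$; since each $C_i$ contains a limit point, the tail of $\{u^k\}$ enters the neighborhood $N_i := \{v : {\rm dist}(v,C_i) < \delta/3\}$ for infinitely many indices, so because $\|u^{k+1}-u^k\| \to 0$ it must also lie in the separating region $\{v : {\rm dist}(v,C_1)\ge\delta/3,\ {\rm dist}(v,C_2)\ge\delta/3\}$ for infinitely many indices; a convergent subsequence taken along those indices yields a limit point outside $C_1\cup C_2$, contradicting $C_1 \cup C_2 = \omega(\{u^k\}_{k\ge0})$. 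Hence $\omega(\{u^k\}_{k\ge0})$ is connected.

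For part (ii) I would again argue by contradiction: if ${\rm dist}(u^k,\omega(\{u^k\}_{k\ge0}))$ did not tend to $0$, there would exist $\varepsilon>0$ and a subsequence $\{u^{k_j}\}$ with ${\rm dist}(u^{k_j},\omega(\{u^k\}_{k\ge0})) \ge \varepsilon$ for all $j$; by boundedness a further subsequence $\{u^{k_{j_\ell}}\}$ converges to some $u^*$, which by definition lies in $\omega(\{u^k\}_{k\ge0})$, so that ${\rm dist}(u^{k_{j_\ell}},\omega(\{u^k\}_{k\ge0})) \le \|u^{k_{j_\ell}} - u^*\| \to 0$, a contradiction. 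I expect the connectedness step in (i) to be the only place requiring genuine care, since it is exactly where the vanishing-increment property from Lemma \ref{consterms} is used; everything else is routine topology of bounded sequences plus a direct citation of Lemma \ref{thm-cluster}.
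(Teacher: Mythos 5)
Your proposal is correct and follows exactly the route the paper intends: the paper's own ``proof'' merely states that the results follow from Lemma \ref{consterms} (vanishing increments plus boundedness) and omits the details, while for (iii) the needed fact is Lemma \ref{thm-cluster}, which you cite. Your write-up simply supplies the standard topological arguments (Bolzano--Weierstrass, the nested-closure description of the limit set, the $\delta/3$ separation argument for connectedness, and the subsequence contradiction for (ii)) that the paper leaves implicit, and these are all sound.
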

\begin{proof}
These results follow by Lemma \ref{consterms}. 
We omit the proof.
\end{proof}
\begin{lemma}\label{samelimit}
Suppose that the Assumptions \ref{assumptionA}  holds.
If $(x^*,y^*,z^*)$ is a limit point of a subsequence $\{(x^{k_j},y^{k_j},z^{k_j})\}_{j\ge 0}$, then
\[
\mathcal R(x^*,y^*,z^*, y^*,z^*)
=
\mathcal L^{\alpha} (x^*,y^*,z^*) 
=
f(x^*)+g(x^*)+h(y^*).
\]

\end{lemma}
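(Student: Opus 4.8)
The plan is to peel off the two equalities one at a time. The first, $\mathcal R(x^*,y^*,z^*,y^*,z^*)=\mathcal L^{\alpha}(x^*,y^*,z^*)$, is immediate from the definition (\ref{eq9}) of $\mathcal R$: taking the last two arguments equal to $(y^*,z^*)$ makes the regularizing terms $r\gamma_0\|B^*(z^*-z^*)\|^2$ and $r\theta_0\|y^*-y^*\|^2$ vanish identically, leaving nothing but $\mathcal L^{\alpha}(x^*,y^*,z^*)$.

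For the second equality I would show that the feasibility residual is zero at the limit point, i.e. $Ax^*+By^*+c=0$; once this is in hand, substituting into the formula (\ref{augL}) for $\mathcal L^{\alpha}$ kills both the coupling term $\langle z^*,Ax^*+By^*+c\rangle$ and the penalty term $\tfrac{\alpha}{2}\|Ax^*+By^*+c\|^2$, leaving exactly $f(x^*)+g(x^*)+h(y^*)$. To obtain $Ax^*+By^*+c=0$ I would invoke Lemma \ref{thm-cluster}: any limit point of the sequence is a critical point of $\mathcal L^{\alpha}$, whose third stationarity equation is precisely $Ax^*+By^*+c=0$. Equivalently one can read it off directly from the $z$-update in (\ref{Alg1}), namely $Ax^{k+1}+By^{k+1}+c=\tfrac{1}{\alpha\beta}\Delta z^{k+1}$, using that $\Delta z^{k_j}\to 0$ along the convergent subsequence (as already exploited in the proof of Lemma \ref{thm-cluster}) together with the continuity of the linear maps $A$ and $B$.

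The only point requiring mild care is that Lemma \ref{samelimit} is stated under Assumption \ref{assumptionA} alone, so the argument must stay within facts available at that level; this is fine, since both Lemma \ref{thm-cluster} and the limiting-continuity Lemma \ref{thm-cluster0} are proved under Assumption \ref{assumptionA} only. One may instead route the second equality through Lemma \ref{thm-cluster0}, which gives $\mathcal L^{\alpha}(x^*,y^*,z^*)=\lim_{j\to\infty}\mathcal L^{\alpha}(x^{k_j},y^{k_j},z^{k_j})$ and $\lim_{j\to\infty}f(x^{k_j})=f(x^*)$, and then pass the residual to the limit. Note also that Lemma \ref{thm-cluster} yields $0\in\partial f(x^*)+\nabla g(x^*)+A^*z^*$, so $x^*\in\mathrm{dom}\,f$ and hence $f(x^*)<+\infty$, ensuring all three quantities are genuinely finite. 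I do not expect a real obstacle here: the lemma is essentially the bookkeeping observation that the augmented Lagrangian, the regularized augmented Lagrangian, and the objective all agree at a feasible cluster point.
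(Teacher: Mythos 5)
Your proposal is correct and follows essentially the same route as the paper: both arguments hinge on the feasibility of the limit point, obtained from the $z$-update identity $Ax^{k+1}+By^{k+1}+c=\tfrac{1}{\alpha\beta}\Delta z^{k+1}$ together with $\Delta z^{k_j}\to 0$, after which the coupling and penalty terms in $\mathcal L^{\alpha}$ drop out and the limiting-continuity facts for $f$, $g$, $h$ finish the job. The only cosmetic difference is that you establish the first equality by evaluating $\mathcal R$ pointwise at $(x^*,y^*,z^*,y^*,z^*)$ (which is exactly what the statement asserts), whereas the paper reaches it by passing the vanishing regularizing terms $r\theta_0\|\Delta y^{k_j}\|^2$ and $r\gamma_0\|B^*\Delta z^{k_j}\|^2$ to the limit along the subsequence.
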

\begin{proof}
Let $\{(x^{k_j},y^{k_j},z^{k_j})\}_{j\ge 0}$ be a subsequence such that $(x^{k_j},y^{k_j},z^{k_j})\to (x^*,y^*,z^*)$
as $j\to\infty$. Hence $\|\Delta y^{k_j}\|\to 0$ and $\|B^*\Delta z^{k_j}\|\le \|B\| \|\Delta z^{k_j}\| \to 0$ as $k\to\infty$
hence
\begin{eqnarray*}
\lim_{j\to\infty} \mathcal R_{k_j} &=&
\lim_{k\to\infty} \mathcal L^{\alpha} (x^{k_j},y^{k_j},z^{k_j}) \\
&=&\lim_{k\to\infty} f(x^{k_j})+g(x^{k_j})+h(y^{k_j})
+\langle z^{k_j}, Ax^{k_j}+By^{k_j}+c\rangle +\frac{\alpha}{2}\|Ax^{k_j}+By^{k_j}+c\|^2.
\end{eqnarray*}
By the $z$ iterate of the algorithm  (\ref{Alg1}), then $\|\Delta z^{k_j+1}\|\to 0$ hence
\[
\|Ax^{k_j}+By^{k_j}+c\|\to 0,\quad {\rm as} \; j\to\infty.
\]
Since $\{z^{k_j}\}_{j\ge 0}$ is a bounded sequence we also have 
\[
\langle z^{k_j}, Ax^{k_j}+By^{k_j}+c\rangle \to 0, \quad k\to\infty
\]
As $g$ and $h$ are smooth and $f$ is lower semicontinuous, then
\begin{eqnarray*}
\lim_{j\to\infty} \mathcal R(x^{k_j},y^{k_j},z^{k_j},y^{k_j},z^{k_j})
&=&\lim_{j\to\infty} \mathcal L^{\alpha}  (x^{k_j},y^{k_j},z^{k_j})\\
&=&\lim_{j\to\infty} f(x^{k_j})+g(x^{k_j})+h(y^{k_j})\\
&=&f(x^*)+g(x^*)+h(y^*).
\end{eqnarray*}
This concludes the proof. $\blacksquare$

\end{proof}

\section{Convergence and Convergence Rates  }\label{sec5}
In this section, we establish the main theoretical results including convergence and convergence rates for both the sequence generated by the PL-ADMM and the regularized augmented Lagrangian. We begin with some important lemmas.

\begin{lemma}\label{lma-D}
Suppose that the Assumption \ref{assumptionA} holds. Let $\{(x^k,y^k,z^k)\}_{k\ge 0}$ be a sequence generated by the PL-ADMM algorithm (\ref{Alg1}). Define
\begin{eqnarray*}
&s_x^{k}:=d^{k}_x,\quad
s_y^{k}:=d^{k}_y+2r\theta_0 \Delta y^{k},\quad
s_z^{k}:=d^{k}_z+2r\gamma_0BB^*\Delta z^{k},&\\
&s_{y'}^{k}:=-2r\theta_0 \Delta y^{k},\quad
s_{z'}^{k}:=-2\gamma_0BB^*\Delta z^{k}&
\end{eqnarray*}
where $(d_x^k,d_y^k,d_z^k)\in\partial \mathcal L^{\alpha}(x^k,y^k,z^k)$.
Then 
\[
s^{k}:=(s^{k}_x, s^{k}_y, s^{k}_z, s^{k}_{y'},s^{k}_{z'}, )\in
\partial \mathcal R(x^{k},y^{k},z^{k},y^{k-1},z^{k-1})
\] for $k\ge 1$, and it holds
\begin{eqnarray}\label{nD}
|||s^{k}|||\le \tilde\rho \Big(\|\Delta x^{k}\|+\|\Delta y^{k}\|+\|\Delta z^{k}\|\Big),
\end{eqnarray}
where 
\begin{eqnarray}\label{trho}
\tilde \rho=\sqrt{3}\rho+4r\max\{\theta_0, \gamma_0\|B\|^2\},
\end{eqnarray}
$r>1$, $\rho$ is given in (\ref{rho}), 
$\theta_0$ and $\gamma_0$ are as (\ref{param}).
\end{lemma}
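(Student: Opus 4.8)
The plan is to recognise $\mathcal R$ as a $C^1$ perturbation of $\mathcal L^{\alpha}$ and to transfer the subgradient estimate of Lemma~\ref{thm-d} through the sum rule. Write
\[
\mathcal R(x,y,z,y',z')=(\mathcal L^{\alpha}\circ\Pi)(x,y,z,y',z')+G(x,y,z,y',z'),
\]
where $\Pi(x,y,z,y',z'):=(x,y,z)$ is the coordinate projection onto the first three blocks and $G(x,y,z,y',z'):=r\gamma_0\|B^*(z-z')\|^2+r\theta_0\|y-y'\|^2$ is a quadratic, hence a globally $C^{1,1}$ function. Because $G$ is continuously differentiable, the sum rule for the limiting subdifferential (\cite{RockafellarWets}) holds with equality, so $\partial\mathcal R=\partial(\mathcal L^{\alpha}\circ\Pi)+\nabla G$; and since $\mathcal L^{\alpha}\circ\Pi$ does not depend on $(y',z')$, the definitions of the Fr\'echet and limiting subdifferentials give $\partial(\mathcal L^{\alpha}\circ\Pi)(x,y,z,y',z')=\partial\mathcal L^{\alpha}(x,y,z)\times\{0\}\times\{0\}$. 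Consequently every element of $\partial\mathcal R(x,y,z,y',z')$ has the form $(d_x,d_y,d_z,0,0)+\nabla G(x,y,z,y',z')$ with $(d_x,d_y,d_z)\in\partial\mathcal L^{\alpha}(x,y,z)$.

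Next I would compute $\nabla G$ blockwise, namely $\nabla_xG=0$, $\nabla_yG=2r\theta_0(y-y')$, $\nabla_zG=2r\gamma_0BB^*(z-z')$, $\nabla_{y'}G=-2r\theta_0(y-y')$ and $\nabla_{z'}G=-2r\gamma_0BB^*(z-z')$, and evaluate it at the point $(x^{k},y^{k},z^{k},y^{k-1},z^{k-1})$, so that $y-y'=\Delta y^{k}$ and $z-z'=\Delta z^{k}$. Choosing $(d_x^{k},d_y^{k},d_z^{k})\in\partial\mathcal L^{\alpha}(x^{k},y^{k},z^{k})$ to be the particular subgradient produced in the proof of Lemma~\ref{thm-d}, with the indices there shifted from $(k+1,k)$ to $(k,k-1)$ (legitimate for $k\ge1$), the sum $(d_x^{k},d_y^{k},d_z^{k},0,0)+\nabla G(x^{k},y^{k},z^{k},y^{k-1},z^{k-1})$ coincides exactly with the vector $s^{k}=(s_x^{k},s_y^{k},s_z^{k},s_{y'}^{k},s_{z'}^{k})$ of the statement; hence $s^{k}\in\partial\mathcal R(x^{k},y^{k},z^{k},y^{k-1},z^{k-1})$ for every $k\ge1$.

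It remains to prove (\ref{nD}). Using subadditivity of the product norm, $|||s^{k}|||\le\|s_x^{k}\|+\|s_y^{k}\|+\|s_z^{k}\|+\|s_{y'}^{k}\|+\|s_{z'}^{k}\|$, the three ``$d$-blocks'' contribute at most $\|d_x^{k}\|+\|d_y^{k}\|+\|d_z^{k}\|\le\sqrt3\,|||d^{k}|||$, and Lemma~\ref{thm-d} applied at index $k$ bounds $|||d^{k}|||$ by $\rho\,(\|\Delta x^{k}\|+\|\Delta y^{k}\|+\|\Delta z^{k}\|)$ with $\rho$ as in (\ref{rho}); the four gradient blocks contribute $4r\theta_0\|\Delta y^{k}\|+4r\gamma_0\|BB^*\Delta z^{k}\|$, which after bounding $\|BB^*\Delta z^{k}\|$ by a multiple of $\|\Delta z^{k}\|$ and collecting the coefficients is controlled by $4r\max\{\theta_0,\gamma_0\}\,(\|\Delta y^{k}\|+\|\Delta z^{k}\|)$ (with $\theta_0,\gamma_0$ as in (\ref{param})). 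Adding the two contributions yields (\ref{nD})--(\ref{trho}). The only genuinely delicate step is the subdifferential calculus of the first paragraph --- making sure the sum rule holds with equality (it does, since $G\in C^1$) and that constant dependence on $(y',z')$ simply appends zero blocks to $\partial\mathcal L^{\alpha}$ --- whereas everything afterwards is routine norm bookkeeping; the minor wrinkle there is the passage from the Euclidean aggregate $|||d^{k}|||$ supplied by Lemma~\ref{thm-d} to the $\ell^1$-type aggregate $\|d_x^{k}\|+\|d_y^{k}\|+\|d_z^{k}\|$, which is where the factor $\sqrt3$ originates.
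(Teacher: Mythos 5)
Your proposal is correct and takes essentially the same route as the paper: the paper likewise obtains $s^{k}$ by adding the blockwise gradient of the quadratic regularization terms (evaluated so that $y-y'=\Delta y^{k}$, $z-z'=\Delta z^{k}$) to a subgradient $d^{k}\in\partial\mathcal L^{\alpha}(x^{k},y^{k},z^{k})$, then bounds $\|d_x^{k}\|+\|d_y^{k}\|+\|d_z^{k}\|\le\sqrt{3}\,|||d^{k}|||$ and invokes Lemma~\ref{thm-d} at index $k$. Your explicit appeal to the $C^{1}$ sum rule and the zero blocks in $(y',z')$ is just a more careful rendering of what the paper does implicitly, and both arguments share the same harmless looseness of absorbing the factor $\|B\|^{2}$ from $\|BB^{*}\Delta z^{k}\|$ into the constant $\tilde\rho$.
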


\begin{proof}
Let $k\ge 1$ fixed, and $(d_x^{k}, d_y^{k}, d_z^{k})\in \partial \mathcal L^{\alpha}(x^k,y^k,z^k)$. 
By taking partial derivatives of $\mathcal R_k$ with respect to $x,y,z,y',z'$ we obtain
\begin{eqnarray*}
s_x^{k}&:=& \partial_x \mathcal R(x^{k},y^{k},z^{k},y^{k-1},z^{k-1})
=\partial_x \mathcal L^{\alpha}(x^{k},y^{k},z^{k})=d_x^{k},\\
s_y^{k}&:=&\nabla_y \mathcal R (x^{k},y^{k},z^{k},y^{k-1},z^{k-1})
=\nabla_y \mathcal L^{\alpha}(x^{k},y^{k},z^{k})+2r\theta_0 \Delta y^k=
d_y^{k}+2r\theta_0 \Delta y^k,\\
s_z^{k}&:=&\nabla_z \mathcal R(x^{k},y^{k},z^{k},y^{k-1},z^{k-1})
=\nabla_z \mathcal L^{\alpha}(x^{k},y^{k},z^{k})
+2r\gamma_0 BB^*\Delta z^{k}=d_z^{k}+2r\gamma_0 BB^*\Delta z^{k},\\
s_{y'}^{k}&:=&\nabla_{y'} \mathcal R(x^{k},y^{k},z^{k},y^{k-1},z^{k-1})
=-2r\theta_0\Delta y^k,\\
d_{z'}^{k}&:=&\nabla_{z'} \mathcal R(x^{k},y^{k},z^{k},y^{k-1},z^{k-1})
=-2r\gamma_0BB^* \Delta z^k.
\end{eqnarray*}
By the triangle inequality we obtain
\begin{eqnarray*}
&\|s_x^{k}\|=\|d^{k}_x\|,\quad 
\|s_y^{k}\|\le \|d^{k}_y\|+2r\theta_0\|\Delta y^k\|,&\\
&\|s_z^{k}\|\le\|d^{k}_z\|+2r\gamma_0\|B\|^2 \|\Delta z^k\|,\quad
\|s_{y'}^{k}\|= 2r\theta_0\|\Delta y^k\|,\quad
\|s_z^{k}\|=2r\gamma_0 \|B\|^2 \|\Delta z^k\|.&
\end{eqnarray*}
By Lemma \ref{thm-d}, this follows that 
\begin{eqnarray*}
|||s^{k}|||&\le&  \|s_x^{k}\|+\|s_y^{k}\|+\|s_z^{k}\|+ \|s_{y'}^{k}\|+\|s_z^{k}\|\\
&\le& \|d^{k}_x\|+  \|d^{k}_y\|+\|d^{k}_z\|+
4r\theta_0\|\Delta y^{k}\|+
4r\gamma_0 \|B\|^2 \|\Delta z^k\|
\\
&\le& \sqrt{3} |||d^k|||+4r\theta_0\|\Delta y^{k}\|+
4r\gamma_0 \|B\|^2 \|\Delta z^k\|\\
&\le& \sqrt{3}\rho \|\Delta x^{k}\| 
+( \sqrt{3}\rho+4r\theta_0)\|\Delta y^{k}\|
+( \sqrt{3}\rho+4r\gamma_0\|B\|^2)\|\Delta z^{k}\|.
\end{eqnarray*}
This concludes the proof. $\blacksquare$
\end{proof}

\begin{lemma}\label{lma-cluster2}
Let the Assumption \ref{assumptionA} and \ref{assumptionB} hold.
If $\{(x^k,y^k,z^k)\}_{k\ge 0}$ is a sequence generated by the PL-ADMM algorithm (\ref{Alg1}),
then the following statements hold.
\begin{itemize}
\item [(i)] The set 
$\omega \Big(\{(x^k,y^k,z^k,y^{k-1},z^{k-1})\}_{k\ge 1}\Big)$
is nonempty, connected, and compact. 
\item [(ii)] $\Omega \subseteq \{(x, y, z,  y, z)
\in\mathbb R^n\times\mathbb R^m\times\mathbb R^p\times\mathbb R^m\times\mathbb R^p:
\;(x, y, z)\in {\rm crit} (\mathcal L^{\alpha})\Big\}.$
\item [(iii)] $\lim_{k\to\infty} {\rm dist} \Big[ (x^k, y^k, z^k, y^{k-1}, x^{k-1}), \Omega \Big]=0$;
\item [(iv)] The sequences $\{\mathcal R_k\}_{k\ge 0}$, $\{\mathcal L^{\alpha}(x^k,y^k,z^k)\}_{k\ge 0}$, and 
$\{\mathcal F(x^k,y^k)\}_{k\ge 0}$ approach to the same limit and if $(x^*, y^*, z^*,  y^*, z^*)\in\Omega$,
then 
\[
\mathcal R (x^*, y^*, z^*,  y^*, z^*)=
 \mathcal L^{\alpha}(x^*, y^*, z^*)
= \mathcal F(x^*,y^*).
\]

\end{itemize}
\end{lemma}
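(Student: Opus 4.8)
The plan is to treat the four items in order, drawing on the boundedness of the iterates (Theorem~\ref{thmbdd}), the vanishing of the increments $\|\Delta x^{k+1}\|,\|\Delta y^{k+1}\|,\|\Delta z^{k+1}\|\to 0$ (Lemma~\ref{consterms}), the subgradient bound for $\mathcal R$ (Lemma~\ref{lma-D}), and the value-identification result (Lemma~\ref{samelimit}). Write $\Omega:=\omega\big(\{(x^k,y^k,z^k,y^{k-1},z^{k-1})\}_{k\ge 1}\big)$; by Theorem~\ref{thmbdd} this augmented sequence is bounded. For (i) and (iii) I would invoke the standard topological facts about cluster sets of bounded sequences with asymptotically vanishing successive differences. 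Nonemptiness and compactness of $\Omega$ are immediate by writing $\Omega=\bigcap_{K\ge 1}\overline{\{(x^k,y^k,z^k,y^{k-1},z^{k-1}):k\ge K\}}$ as a nested intersection of nonempty compact sets. Connectedness follows because Lemma~\ref{consterms} gives $\|(x^{k+1},y^{k+1},z^{k+1},y^k,z^k)-(x^k,y^k,z^k,y^{k-1},z^{k-1})\|\to 0$, which prevents $\Omega$ from splitting into two separated pieces. Item (iii) is the contrapositive: if $\mathrm{dist}\big((x^k,y^k,z^k,y^{k-1},z^{k-1}),\Omega\big)\not\to 0$, some subsequence stays at distance $\ge\varepsilon$ from $\Omega$, yet by boundedness it has a further convergent subsequence whose limit lies in $\Omega$ by definition, a contradiction.

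For (ii), first note that since $\|\Delta y^k\|\to 0$ and $\|\Delta z^k\|\to 0$, every element of $\Omega$ has the diagonal form $(x^*,y^*,z^*,y^*,z^*)$. To show $(x^*,y^*,z^*)\in{\rm crit}\,\mathcal L^{\alpha}$, pick a subsequence with $(x^{k_j},y^{k_j},z^{k_j},y^{k_j-1},z^{k_j-1})\to(x^*,y^*,z^*,y^*,z^*)$. Lemma~\ref{samelimit} gives $\mathcal R_{k_j}\to\mathcal R(x^*,y^*,z^*,y^*,z^*)$, and Lemma~\ref{lma-D} produces $s^{k_j}\in\partial\mathcal R(x^{k_j},y^{k_j},z^{k_j},y^{k_j-1},z^{k_j-1})$ with $|||s^{k_j}|||\le\tilde\rho\big(\|\Delta x^{k_j}\|+\|\Delta y^{k_j}\|+\|\Delta z^{k_j}\|\big)\to 0$; closedness of the graph of the limiting subdifferential then forces $0\in\partial\mathcal R(x^*,y^*,z^*,y^*,z^*)$. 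Finally, since $\mathcal R$ is the sum of $\mathcal L^{\alpha}(x,y,z)$ and the $C^1$ coupling terms $r\gamma_0\|B^*(z-z')\|^2$ and $r\theta_0\|y-y'\|^2$ of (\ref{eq9}), whose gradients vanish on the diagonal $\{y=y',\,z=z'\}$, the subdifferential sum rule shows that at $(x^*,y^*,z^*,y^*,z^*)$ the $(x,y,z)$-block of $\partial\mathcal R$ equals $\partial\mathcal L^{\alpha}(x^*,y^*,z^*)$ and the $(y',z')$-block is $\{0\}$; hence $0\in\partial\mathcal L^{\alpha}(x^*,y^*,z^*)$, which is (ii).

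For (iv), Lemma~\ref{lmma-Rk} gives that the \emph{whole} sequence $\{\mathcal R_k\}$ converges, say to $\mathcal R^*$. From $\mathcal R_k=\mathcal L^{\alpha}(x^k,y^k,z^k)+r\gamma_0\|B^*\Delta z^k\|^2+r\theta_0\|\Delta y^k\|^2$ and Lemma~\ref{consterms}, the two extra terms vanish, so $\mathcal L^{\alpha}(x^k,y^k,z^k)\to\mathcal R^*$. Writing $\mathcal L^{\alpha}(x^k,y^k,z^k)=\mathcal F(x^k,y^k)+\langle z^k,Ax^k+By^k+c\rangle+\tfrac{\alpha}{2}\|Ax^k+By^k+c\|^2$ and using $Ax^k+By^k+c=\tfrac{1}{\alpha\beta}\Delta z^{k+1}\to 0$ together with boundedness of $\{z^k\}$, also $\mathcal F(x^k,y^k)\to\mathcal R^*$; this shows the three sequences share the limit $\mathcal R^*$. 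Evaluating along any subsequence converging to a point of $\Omega$ and applying Lemma~\ref{samelimit} identifies $\mathcal R^*$ with $\mathcal R(x^*,y^*,z^*,y^*,z^*)=\mathcal L^{\alpha}(x^*,y^*,z^*)=\mathcal F(x^*,y^*)$, giving the stated common value.

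The step I expect to be the main obstacle is (ii): one must be precise that closedness of $\mathrm{Graph}\,\partial\mathcal R$ is applied with both the arguments \emph{and} the function values converging along the subsequence — which is exactly why Lemma~\ref{samelimit} is needed rather than mere subsequential convergence of the iterates — and one must carefully justify the reduction of $\partial\mathcal R$ restricted to the diagonal to $\partial\mathcal L^{\alpha}$ via the smoothness of the coupling terms. The topological items (i) and (iii) and the arithmetic in (iv) are routine by comparison.
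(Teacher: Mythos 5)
Your proof is correct and supplies, with the right ingredients (Lemmas \ref{consterms}, \ref{lma-D}, \ref{lmma-Rk}, \ref{samelimit} and closedness of the limiting subdifferential applied together with value convergence), exactly the argument the paper leaves implicit --- its own proof of this lemma is a one-line citation of Lemmas \ref{thm-cluster}, \ref{lmma-Rk} and \ref{lma-D}, so you are filling in the omitted details along the same route (your detour through $\partial\mathcal R$ in (ii) is equivalent to citing Lemma \ref{thm-cluster} directly, since the coupling terms are smooth and vanish to first order on the diagonal). The only blemish is a harmless index shift in (iv): the $z$-update gives $Ax^k+By^k+c=\frac{1}{\alpha\beta}\Delta z^{k}$ rather than $\frac{1}{\alpha\beta}\Delta z^{k+1}$, which changes nothing since both tend to zero.
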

\begin{proof}
These results follow immediately from Lemma \ref{thm-cluster}, Theorem \ref{lmma-Rk}, 
and Lemma \ref{lma-D}. $\blacksquare$
\end{proof}

\begin{theorem}\label{conv}
{\bf (Convergence)}
Suppose that the Assumptions \ref{assumptionA} and \ref{assumptionB} (ii) hold,
$\{(x^k,y^k,z^k)\}_{k\ge 0}$ is a sequence generated by the
PL-ADMM algorithm (\ref{Alg1}) which is assumed to be bounded, and 
$\mathcal R$ satisfies the
{K\L} property on 
$
\Omega:=\omega \Big(\{(x^k,y^k,z^k,y^{k-1},z^{k-1})\}_{k\ge 1}\Big).
$
That is, for every 
$v^*:=(x^*, y^*, z^*,  y^*, z^*) \in \Omega$
there exists $\epsilon>0$, $\eta\in [0,+\infty)$, and desingularizing function
$\psi\in\Psi_{\eta}$ such that for every  
$v=(x,y,z,y',z')\in\mathcal S$, where 
\begin{eqnarray}\label{eq:intersec}
\mathcal S:=\Big\{v \in \mathbb R^n\times\mathbb R^m\times\mathbb R^p\times\mathbb R^m\times\mathbb R^p:
\;{\rm dist}(v,\Omega)<\epsilon \;{\rm and} \; \mathcal R(v^*) <\mathcal R(v)< \mathcal R(v^*)+\eta\Big\},
\end{eqnarray}
it holds 
\[
\psi'\Big(\mathcal R(v)-\mathcal R(v^*)\Big) {\rm dist}\Big(0,\partial \mathcal R(v)\Big)\ge 1.
\]
Then $\{u^k\}_{k\ge 0}:=\{(x^k,y^k,z^k)\}_{k\ge 0}$ satisfies the finite length property 
\[
\sum_{k=0}^{\infty} 
\| \Delta x^{k}\|
+ \| \Delta y^{k}\|
+ \| \Delta z^{k}\|<+\infty,
\]
and consequently converges to a stationary point of (\ref{OP}).

%
\end{theorem}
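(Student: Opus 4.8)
The plan is to follow the now-standard Bolte--Sabach--Teboulle / Attouch--Bolte--Svaiter template for descent methods under the {K\L} property, but applied to the \emph{regularized} augmented Lagrangian $\mathcal R$ rather than the objective itself, since it is $\mathcal R_k$ that enjoys sufficient decrease by (\ref{eq:mdr11-r}). First I would fix a limit point $v^*=(x^*,y^*,z^*,y^*,z^*)\in\Omega$; by Lemma \ref{lma-cluster2}(iv) the sequence $\{\mathcal R_k\}$ is nonincreasing and converges to the common limit $\mathcal R(v^*)=:\mathcal R_\infty$. If $\mathcal R_k=\mathcal R_\infty$ for some finite $k$, then the sufficient decrease inequality (\ref{eq:mdr11-r}) forces $\Delta x^{k+1}=\Delta y^{k+1}=\Delta z^{k+1}=0$ and the iterates are eventually constant, so finite length is trivial; hence assume $\mathcal R_k>\mathcal R_\infty$ for all $k$.

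Next I would invoke the uniformized {K\L} property on the compact connected set $\Omega$ (the version stated in the preliminaries for a compact set on which $\mathcal R$ is constant, which applies by Lemma \ref{lma-cluster2}): there exist $\epsilon,\eta>0$ and $\psi\in\Psi_\eta$ so that the {K\L} inequality holds on the tube $\{\mathrm{dist}(v,\Omega)<\epsilon\}\cap[\mathcal R_\infty<\mathcal R<\mathcal R_\infty+\eta]$. By Lemma \ref{lma-cluster2}(iii) we have $\mathrm{dist}((x^k,y^k,z^k,y^{k-1},z^{k-1}),\Omega)\to0$, and by convergence of $\mathcal R_k\downarrow\mathcal R_\infty$ we have $\mathcal R_k<\mathcal R_\infty+\eta$ for all $k$ large; so there is $k_1\ge k_0$ past which $v^k:=(x^k,y^k,z^k,y^{k-1},z^{k-1})$ lies in this tube and the {K\L} inequality applies:
\begin{eqnarray*}
\psi'\big(\mathcal R_k-\mathcal R_\infty\big)\,\mathrm{dist}\big(0,\partial\mathcal R(v^k)\big)\ge 1.
\end{eqnarray*}
Combining this with the subgradient bound from Lemma \ref{lma-D}, namely $\mathrm{dist}(0,\partial\mathcal R(v^k))\le|||s^k|||\le\tilde\rho(\|\Delta x^k\|+\|\Delta y^k\|+\|\Delta z^k\|)$, gives the lower bound $\psi'(\mathcal R_k-\mathcal R_\infty)\ge\big[\tilde\rho(\|\Delta x^k\|+\|\Delta y^k\|+\|\Delta z^k\|)\big]^{-1}$.

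The heart of the argument is then the standard telescoping trick. Write $\delta_k:=\psi(\mathcal R_k-\mathcal R_\infty)-\psi(\mathcal R_{k+1}-\mathcal R_\infty)\ge0$; by concavity of $\psi$ and the descent inequality (\ref{eq:mdr11-r}) in the form $\mathcal R_k-\mathcal R_{k+1}\ge\sigma(\|\Delta x^{k+1}\|^2+\|\Delta y^{k+1}\|^2+\|\Delta z^{k+1}\|^2)$, one estimates
\begin{eqnarray*}
\delta_k\ \ge\ \psi'(\mathcal R_k-\mathcal R_\infty)\big(\mathcal R_k-\mathcal R_{k+1}\big)\ \ge\ \frac{\sigma\big(\|\Delta x^{k+1}\|^2+\|\Delta y^{k+1}\|^2+\|\Delta z^{k+1}\|^2\big)}{\tilde\rho\big(\|\Delta x^k\|+\|\Delta y^k\|+\|\Delta z^k\|\big)}.
\end{eqnarray*}
Then I would use the arithmetic–geometric inequality $2\sqrt{ab}\le a+b$ with $a$ the numerator-type quantity and $b=\tfrac{\tilde\rho}{\sigma}\cdot(\text{denominator})\cdot(\text{large constant})$ to deduce $\|\Delta x^{k+1}\|+\|\Delta y^{k+1}\|+\|\Delta z^{k+1}\|\le C\delta_k+\tfrac12(\|\Delta x^k\|+\|\Delta y^k\|+\|\Delta z^k\|)$ for a suitable constant $C$ (absorbing $\sqrt3$ factors between $|||\cdot|||$ and the $\ell_1$-sum of block norms). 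Summing from $k_1$ to $K$, the $\delta_k$ telescope to $\psi(\mathcal R_{k_1}-\mathcal R_\infty)-\psi(\mathcal R_{K+1}-\mathcal R_\infty)\le\psi(\mathcal R_{k_1}-\mathcal R_\infty)<\infty$ (using $\psi\ge0$), and the $\tfrac12$-contraction on the right lets one absorb the shifted tail; rearranging yields $\sum_{k\ge k_1}(\|\Delta x^k\|+\|\Delta y^k\|+\|\Delta z^k\|)<\infty$, hence the full sum is finite after adding the finitely many initial terms. Finite length implies $\{(x^k,y^k,z^k)\}$ is Cauchy, so it converges to some $(\bar x,\bar y,\bar z)$, which by Lemma \ref{thm-cluster} (limit point is a critical point) is a stationary point of $\mathcal L^\alpha$, equivalently of (\ref{OP}).

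I expect the main obstacle to be purely bookkeeping rather than conceptual: one must be careful that the {K\L} inequality is stated for $\mathcal R$ evaluated at the \emph{five}-tuple $v^k$ with the lagged variables $y^{k-1},z^{k-1}$, while the sufficient-decrease and subgradient bounds are expressed in terms of the increments $\Delta x^k,\Delta y^k,\Delta z^k$ of the \emph{three} genuine variables — so the telescoping must be set up so that the increment appearing in the {K\L}/subgradient bound at step $k$ (which involves $\Delta y^k,\Delta z^k$) matches the one controlled by $\delta_{k-1}$, requiring a one-step index shift and a short argument that $\|\Delta y^{k-1}\|,\|\Delta z^{k-1}\|$ are already summable or can be folded in. The other delicate point is handling the case distinction at $\mathcal R_k=\mathcal R_\infty$ and ensuring the tube-entry index $k_1$ exists, but both are routine given Lemmas \ref{consterms} and \ref{lma-cluster2}.
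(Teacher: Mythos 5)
Your proposal is correct and follows essentially the same route as the paper: sufficient decrease of $\mathcal R_k$ from (\ref{eq:mdr11-r}), the uniformized {K\L} inequality on the compact connected set $\Omega$ after a tube-entry index, concavity of $\psi$ to telescope, the subgradient bound of Lemma \ref{lma-D} to close the loop, AM--GM to absorb the one-step-shifted increment, and then Cauchy plus Lemma \ref{thm-cluster} to identify the limit as a stationary point. The only cosmetic difference is that the paper keeps a free AM--GM parameter $\gamma$ and chooses it so that $1-\sqrt{3}\tilde\rho/(2\gamma)>0$, whereas you fix the absorption coefficient at $\tfrac12$; the two are equivalent.
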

\begin{proof}
By Lemma \ref{lmma-Rk}, there exists a $k_0\ge 0 $ such that
the sequence $\{\mathcal R_k\}_{k\ge k_0}$ is monotonically 
decreasing and converges, let $\mathcal R_{\infty}:=\lim_{k\to\infty}\mathcal R_k$. This follows that 
the error sequence $\mathcal E_k:=\mathcal R_k-\mathcal R_{\infty}$,
is non-negative, monotonically decreasing for all $k\ge k_0$, and converges to $0$.
Let us consider two cases:

{\it Case 1.}  There is $k_1\ge k_0$ such that $\mathcal E_{k_1}=0$.
Hence $\mathcal E_k=0$ for all $k\ge k_1$
and by (\ref{eq:mdr11-r}) we have
\[
\|\Delta x^{k+1}\|^2+\|\Delta y^{k+1}\|^2+\|\Delta z^{k+1}\|^2 \le 
 \frac{1}{\sigma}(\mathcal E_k-\mathcal E_{k+1})=0,\quad \forall k\ge k_1.
\]
This gives rise to
\begin{eqnarray*}
 \sum_{k\ge 0}\Big(\|\Delta x^{k+1}\|
+\|\Delta y^{k+1}\|
+\|\Delta z^{k+1}\|\Big)
\le 
\sum_{k=1}^{k_1} \Big(\|\Delta x^{k}\|
+\|\Delta y^{k}\|
+\|\Delta z^{k}\|\Big)<+\infty.
\end{eqnarray*}
The latter conclusion is due to the fact that the sequence is bounded.

{\it Case 2.} The error sequence $\mathcal E_k=\mathcal R_k-\mathcal R_{\infty}>0$ for all $k\ge k_0$. 
Then by (\ref{eq:mdr11-r}) we have 
\begin{eqnarray}\label{s1}
|||\Delta u^{k+1}|||^2=\|\Delta x^{k+1}\|^2+\|\Delta y^{k+1}\|^2+\|\Delta z^{k+1}\|^2 \le 
 \frac{1}{\sigma}(\mathcal E_k-\mathcal E_{k+1}),\quad \forall k\ge k_0.
\end{eqnarray}
By Lemma \ref{lma-cluster2}, $\Omega$ is nonempty, compact, and connected.
Also, $\mathcal R_k$ takes on a
constant value $\mathcal R_{\infty}$ on $\Omega$. 
Since the sequence $\{\mathcal R_{k}\}_{k\ge k_0}$ is monotonically decreasing to $\mathcal R_{\infty}$,
then there exists $ k_1\ge k_0\ge 1$ such that 
\[
\mathcal R_{\infty} <\mathcal R_k< \mathcal R_{\infty}+\eta,\quad \forall k\ge k_1. 
\]
By Lemma \ref{lma-cluster2} we also have
$\lim_{k\to\infty} {\rm dist} \Big[ (x^k,y^k,z^k,y^{k-1},x^{k-1}), \Omega\Big]=0.$
Thus  there exists $k_2\ge 1$ such that 
\[
{\rm dist} \Big[ (x^k,y^k,z^k,y^{k-1},x^{k-1}), \Omega\Big]<\epsilon, \quad  \forall k\ge k_2. 
\]
Choose $\tilde k =\max\{ k_1,k_2,3 \}$
 then $(x^k,y^k,z^k,y^{k-1},x^{k-1})\in\mathcal S$
for $k\ge \tilde k$, where $\mathcal S$ defined in (\ref{eq:intersec}).
This follows that
\begin{eqnarray}\label{KL}
\psi'( \mathcal E_k)\cdot {\rm dist}\Big(0, 
\partial \mathcal R_k \Big)
 \ge1.
\end{eqnarray}
Since $\psi$ is concave, we have 
$
\psi(\mathcal E_{k}) -\psi(\mathcal E_{k+1})\ge 
 \psi'(\mathcal E_{k}) (\mathcal E_{k} - \mathcal E_{k+1}).
$
By this, together with  (\ref{s1}) and (\ref{KL}) we then obtain
\begin{eqnarray*}
|||\Delta u^{k+1}|||^2 &\le& 
\psi'( \mathcal E_k) |||\Delta u^{k+1}|||^2 \cdot {\rm dist}( 0, \partial \mathcal R_k)\\
&\le& 
\frac{1}{\sigma} \psi'( \mathcal E_k)  (\mathcal E_{k} - \mathcal E_{k+1}) \cdot {\rm dist}(0, \partial \mathcal R_k ) \\
&\le& \frac{1}{\sigma} \Big(\psi(\mathcal E_{k}) -\psi(\mathcal E_{k+1})\Big) 
 \cdot {\rm dist}(0, \partial \mathcal R_k ).
\end{eqnarray*}
By the arithmetic mean-geometric mean inequality for any $\gamma>0$ we  have
\[
|||\Delta u^{k+1}|||
\le \frac{\gamma}{2\sigma}  \Big(\psi(\mathcal E_{k}) -\psi(\mathcal E_{k+1})\Big) 
+\frac{1}{2\gamma}{\rm dist}(0, \partial \mathcal R_k ).
\]
This follows that 
\begin{eqnarray}\label{main}
\|\Delta x^{k+1}\|
+\|\Delta y^{k+1}\|
+\|\Delta z^{k+1}\|
\le \frac{\sqrt{3}\gamma}{2\sigma}  \Big(\psi(\mathcal E_{k}) -\psi(\mathcal E_{k+1})\Big) 
+\frac{\sqrt{3}}{2\gamma}{\rm dist}(0, \partial \mathcal R_k ).
\end{eqnarray}
By Lemma \ref{lma-D} we then obtain
\begin{eqnarray}\label{import1}
\|\Delta x^{k+1}\|
+\|\Delta y^{k+1}\|
+\|\Delta z^{k+1}\|
\le 
\frac{\sqrt{3}\gamma}{2\sigma}  \Big(\psi(\mathcal E_{k}) -\psi(\mathcal E_{k+1})\Big) 
+
\frac{\sqrt{3}\tilde \rho}{2\gamma}\Big(\|\Delta x^{k}\|
+\|\Delta y^{k}\|
+\|\Delta z^{k}\|\Big).
\end{eqnarray}
Exploit the identity
$
\sum_{k=\underline k}^K\|\Delta x^{k}\| =
\sum_{k=\underline k}^{K}
\|\Delta x^{k+1}\|
+\|\Delta x^{\underline k}\|-\|\Delta x^K\|,
$and choose $\gamma>0$ large enough such that $1>\sqrt{3}\tilde \rho/2\gamma$, and 
let $\delta_0=1-\frac{\sqrt{3}\tilde \rho}{2\gamma}$. 
Summing up (\ref{import1}) from $k=\underline k\ge \tilde k $ to $K\ge \underline k$ gives
\begin{eqnarray*}
&\sum_{k=\underline k}^K\|\Delta x^{k+1}\|
+\|\Delta y^{k+1}\|
+\|\Delta z^{k+1}\|
\le 
\frac{\sqrt{3}\gamma}{2\sigma\delta_0} 
\Big(\psi(\mathcal E_{\underline k}) -\psi(\mathcal E_{K+1})\Big) 
&\\
&+\frac{\sqrt{3}\tilde \rho}{2\gamma\delta_0}
\Big(\|\Delta x^{\underline k}\|
+\|\Delta y^{\underline k}\|
+\|\Delta z^{\underline k}\|\Big)
-\frac{\sqrt{3}\tilde \rho}{2\gamma\delta_0}
\Big(\|\Delta x^{K}\|
+\|\Delta y^{K}\|
+\|\Delta z^{K}\|\Big).&
\end{eqnarray*}
Recall that $\mathcal E_k$ is monotonically decreasing and
$\psi(\mathcal E_k)\ge \psi(\mathcal E_{k+1})>0$  hence%
\begin{eqnarray*}
&\sum_{k=\underline k}^K\|\Delta x^{k+1}\|
+\|\Delta y^{k+1}\|
+\|\Delta z^{k+1}\|
\le 
\frac{\sqrt{3}\gamma}{2\sigma\delta_0} 
\psi(\mathcal E_{\underline k})+\frac{\sqrt{3}\tilde \rho}{2\gamma\delta_0}
\big(\|\Delta x^{\underline k}\|
+\|\Delta y^{\underline k}\|
+\|\Delta z^{\underline k}\|\big).
\end{eqnarray*}\label{finitelength}
The right hand side of this inequality is bounded for any $K\ge \underline k$. We let $K\to\infty$ to obtain
\begin{eqnarray}\label{lambda0}
\sum_{k\ge\underline k}\|\Delta x^{k+1}\|
+\|\Delta y^{k+1}\|
+\|\Delta z^{k+1}\|
\le 
\frac{\sqrt{3}\gamma}{2\sigma\delta_0} 
\psi(\mathcal E_{\underline k})+\frac{\sqrt{3}\tilde \rho}{2\gamma\delta_0}
\Big(\|\Delta x^{\underline k}\|
+\|\Delta y^{\underline k}\|
+\|\Delta z^{\underline k}\|\Big).
\end{eqnarray}
Since $\{(x^k,y^k,z^k)\}_{k\ge 0}$ is a bounded sequence, then 
for any $\underline k\in\mathbb Z_+$ we clearly have
\begin{eqnarray}\label{lambda}
\lambda({\underline k}):=\sum_{k=1}^{\underline k}
 \|\Delta x^{k}\|
+\|\Delta y^{k}\|
+\|\Delta z^{k}\|<+\infty.
\end{eqnarray}
Thus,  by combining (\ref{lambda0}) and (\ref{lambda}) 
we conclude that $\displaystyle{\sum_{k\ge 0} \|\Delta x^{k+1}\|
+\|\Delta y^{k+1}\|
+\|\Delta z^{k+1}\|}$ is finite.

Note that for any $p,q,K\in\mathbb Z_+$ where $q\ge p>0$ we have 
\begin{eqnarray*}
|||u^q-u^p||| &=&||| \sum_{k=p}^{q-1} \Delta u^{k+1}|||
\le \sum_{k=p}^{q-1} |||\Delta u^{k+1}|||\\
&\le&   \sum_{k=p}^{q-1}
\Big(\| \Delta x^{k+1}\|
+ \| \Delta y^{k+1}\|
+ \| \Delta z^{k+1}\|\Big)\\
&\le&\sum_{k\ge 0} \|\Delta x^{k+1}\|
+\|\Delta y^{k+1}\|
+\|\Delta z^{k+1}\|\\
&<& \infty.
\end{eqnarray*}
This implies that $\{u^k\}_{k\ge 0}=\{(x^k,y^k,z^k)\}_{k\ge 0}$ is a Cauchy sequence
and converges. Moreover, by Lemma \ref{thm-cluster}, it converges to a stationary point.  $\blacksquare$
 \end{proof}

\begin{remark}
Theorem \ref{conv} gives rise to the fact that the limit point set $\omega(\{(x^k,y^k,z^k)\}_{k\ge 0})$ is a singleton. 
Let's denote by $(x^{\infty}, y^{\infty}, z^{\infty})$
the unique limit point of the sequence ${(x^k,y^k,z^k)}_{k\ge 0}$.
\end{remark}

\begin{theorem}\label{FR}
{\bf (Convergence rate of  $\mathcal R_k$)} 
Suppose that  Assumption \ref{assumptionA} and  \ref{assumptionB} (ii) hold,
and $\mathcal R$ satisfies the K{\L}  property at 
$v^{\infty}:=(x^{\infty}, y^{\infty}, z^{\infty},y^{\infty}, z^{\infty})$. That is,
there exists an exponent $\theta\in[0,1)$, $C_L>0$,  and $\epsilon>0$ such that 
for all $v:=(x,y,z,y',z')$ where ${\rm dist}(v,v^{\infty})<\epsilon$ it holds
\begin{eqnarray}\label{eq:L}
|\mathcal R(v)-\mathcal R(v^{\infty)})|^{\theta}\le C_L{\rm dist}(0,\partial \mathcal R(v)).
\end{eqnarray}
Denote 
$\mathcal E_k:=\mathcal R_k-\mathcal R_{\infty}$,
where $\mathcal R_{\infty}:=\mathcal R(v^{\infty})=\lim_{k\to\infty} \mathcal R_k$. 
There exists $K\ge 1$  such that 
\begin{eqnarray}\label{Eformula}
\bar\alpha \mathcal E_{k}^{2\theta}\le \mathcal E_{k-1}-\mathcal E_{k},\quad \forall k\ge K
\end{eqnarray}
where $\bar\alpha>0$. Moreover, 
\begin{itemize}
\item [(a)] if $\theta=0$, then $\mathcal E_k$ converges to zero in a finite number of iterations.
\item [(b)] if $\theta\in (0,1/2]$, then for all $k\ge K$ it holds

\begin{eqnarray}
\mathcal E_k\le
 \displaystyle{\frac{ \max\{\mathcal E_i:1\le i\le K\}}{(1+\bar\alpha \mathcal E_K^{2\theta-1})^{k-K+1} }},
\end{eqnarray}

 \item [(c)] if $\theta\in (1/2,1)$ then there is a  $\mu>0$ such that for all $k\ge K$ it holds
    \[
 \mathcal E_{k}\le \Big(\frac{1}{\mu  (k-K)+ \mathcal E_K^{1-2\theta}} \Big)^{\frac{1}{2\theta-1}},
  \quad \forall k\ge K.
   \]
\end{itemize}
\end{theorem}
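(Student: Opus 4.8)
The plan is to first derive the master recursion~(\ref{Eformula}) by combining the \L ojasiewicz inequality with two facts already in hand --- the subgradient bound of Lemma~\ref{lma-D} and the sufficient-decrease inequality~(\ref{eq:mdr11-r}) --- and then to read off the three convergence rates from it by elementary comparison arguments for scalar recursions. For the master inequality, recall that by Theorem~\ref{conv} the sequence $\{(x^k,y^k,z^k)\}_{k\ge0}$ converges to $(x^\infty,y^\infty,z^\infty)$, hence $v^k:=(x^k,y^k,z^k,y^{k-1},z^{k-1})\to v^\infty$, so there is $K\ge1$ with ${\rm dist}(v^k,v^\infty)<\epsilon$ for all $k\ge K$. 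If $\mathcal E_k=0$ for some $k\ge K$ then~(\ref{eq:mdr11-r}) forces every later increment to vanish and the method terminates, so we may assume $\mathcal E_k>0$ for all $k\ge K$. Applying~(\ref{eq:L}) at $v=v^k$ gives $\mathcal E_k^{\theta}\le C_L\,{\rm dist}(0,\partial\mathcal R_k)$; I would then bound the right-hand side by Lemma~\ref{lma-D}, ${\rm dist}(0,\partial\mathcal R_k)\le\tilde\rho(\|\Delta x^k\|+\|\Delta y^k\|+\|\Delta z^k\|)$, and use the elementary estimate $(\|\Delta x^k\|+\|\Delta y^k\|+\|\Delta z^k\|)^2\le3(\|\Delta x^k\|^2+\|\Delta y^k\|^2+\|\Delta z^k\|^2)\le\tfrac{3}{\sigma}(\mathcal E_{k-1}-\mathcal E_k)$, which follows from~(\ref{eq:mdr11-r}) with the index shifted to $k$. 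Squaring the \L ojasiewicz bound then yields $\mathcal E_k^{2\theta}\le\tfrac{3C_L^2\tilde\rho^2}{\sigma}(\mathcal E_{k-1}-\mathcal E_k)$, which is~(\ref{Eformula}) with $\bar\alpha=\sigma/(3C_L^2\tilde\rho^2)$.

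Granting~(\ref{Eformula}), the three cases are routine. If $\theta=0$ and $\mathcal E_k>0$ for all $k\ge K$, then $\mathcal E_k^{2\theta}=1$, so~(\ref{Eformula}) gives $\mathcal E_{k-1}-\mathcal E_k\ge\bar\alpha$ for all $k\ge K$, forcing $\mathcal E_k\to-\infty$, which contradicts $\mathcal E_k\ge0$; hence $\mathcal E_k$ reaches $0$ after finitely many steps, proving~(a). If $\theta\in(0,1/2]$ then $2\theta-1\le0$ and $\mathcal E_k\le\mathcal E_K$ gives $\mathcal E_k^{2\theta}=\mathcal E_k\,\mathcal E_k^{2\theta-1}\ge\mathcal E_K^{2\theta-1}\mathcal E_k$, so~(\ref{Eformula}) becomes $(1+\bar\alpha\mathcal E_K^{2\theta-1})\mathcal E_k\le\mathcal E_{k-1}$; iterating this geometric decay from $k=K$ and bounding $\mathcal E_K$ by $\max\{\mathcal E_i:1\le i\le K\}$ gives~(b).

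The case $\theta\in(1/2,1)$ needs slightly more care. Here $2\theta-1>0$; I would introduce $g(s):=s^{1-2\theta}$, which is positive, decreasing, and convex on $(0,+\infty)$, and write, using convexity of $g$ followed by~(\ref{Eformula}), $g(\mathcal E_k)-g(\mathcal E_{k-1})\ge g'(\mathcal E_{k-1})(\mathcal E_k-\mathcal E_{k-1})=(2\theta-1)\mathcal E_{k-1}^{-2\theta}(\mathcal E_{k-1}-\mathcal E_k)\ge(2\theta-1)\bar\alpha(\mathcal E_k/\mathcal E_{k-1})^{2\theta}$. To turn this into a uniform positive lower bound on $g(\mathcal E_k)-g(\mathcal E_{k-1})$, I would split on the ratio: if $\mathcal E_{k-1}\le2\mathcal E_k$ the last expression is at least $(2\theta-1)\bar\alpha\,2^{-2\theta}$, while if $\mathcal E_{k-1}>2\mathcal E_k$ then $g(\mathcal E_k)\ge2^{2\theta-1}g(\mathcal E_{k-1})$ directly, so $g(\mathcal E_k)-g(\mathcal E_{k-1})\ge(2^{2\theta-1}-1)g(\mathcal E_{k-1})\ge(2^{2\theta-1}-1)g(\mathcal E_K)$ since $g$ is decreasing and $\mathcal E_{k-1}\le\mathcal E_K$. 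Taking $\mu$ to be the minimum of these two positive constants and summing $g(\mathcal E_k)-g(\mathcal E_{k-1})\ge\mu$ telescopically gives $\mathcal E_k^{1-2\theta}\ge\mathcal E_K^{1-2\theta}+\mu(k-K)$, which rearranges to the claimed $O\big((k-K+1)^{-1/(2\theta-1)}\big)$ estimate after absorbing the off-by-one into the constant.

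The main obstacle is concentrated in the first paragraph: one must be certain the \L ojasiewicz inequality is legitimately invoked along the entire tail of the sequence, which relies on the convergence already established in Theorem~\ref{conv} (to pin down $v^\infty$ and its neighbourhood) and on correctly bookkeeping the indices when chaining the subgradient estimate of Lemma~\ref{lma-D} with the descent inequality~(\ref{eq:mdr11-r}) --- the subgradient lives at step $k$, whereas the energy decrease available there is $\mathcal E_{k-1}-\mathcal E_k$. Everything downstream, including the case split for $\theta\in(1/2,1)$, is standard manipulation of nonnegative decreasing scalar sequences.
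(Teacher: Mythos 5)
Your proposal is correct and, for the master inequality (\ref{Eformula}) and cases (a)--(b), it follows exactly the paper's route: \L ojasiewicz at $v^k$, the subgradient bound of Lemma~\ref{lma-D}, and the sufficient decrease (\ref{eq:mdr11-r}) with the index shifted, yielding the same constant $\bar\alpha=\sigma/(3C_L^2\tilde\rho^2)$, followed by the same contradiction argument for $\theta=0$ and the same geometric iteration for $\theta\in(0,1/2]$. The only divergence is case (c): the paper compares $(\mathcal E_{k-1}-\mathcal E_k)\mathcal E_{k-1}^{-2\theta}$ with $\int_{\mathcal E_k}^{\mathcal E_{k-1}}s^{-2\theta}\,ds$ and splits on whether $h(\mathcal E_k)\le r_0\,h(\mathcal E_{k-1})$ for a parameter $r_0>1$, whereas you use the convexity gradient inequality for $s\mapsto s^{1-2\theta}$ and split per iteration on the ratio $\mathcal E_k/\mathcal E_{k-1}$ at $1/2$; both are instances of the standard Attouch--Bolte argument and give the same $O\big((k-K)^{-1/(2\theta-1)}\big)$ rate, but your per-$k$ dichotomy is self-contained and arguably cleaner than the paper's uniform-$r_0$ formulation, at the cost of a slightly different (still explicit) constant $\mu$. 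Your remark about absorbing the off-by-one in $\mu(k-K)$ versus $\mu(k-K+1)$ matches a discrepancy already present between the paper's statement and its own final display.
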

\begin{proof}
By (\ref{nD}) for any $k\ge 1$ we have 
 \begin{eqnarray}\label{ss}
\frac{1}{3\tilde \rho^2}|||s^k|||^2 \le   \|\Delta x^k\|^2+\|\Delta y^k\|^2+ \|\Delta z^k\|^2.
 \end{eqnarray}
By (\ref{eq:mdr11-r}) there exists a $k_0\ge 1$ such that for any $k\ge k_0$ we have 
 \begin{eqnarray}\label{sss}
\|\Delta x^{k}\|^2+\|\Delta y^{k}\|^2+\|\Delta z^{k}\|^2
\le \frac{1}{\sigma} (\mathcal E_{k-1}-\mathcal E_{k}).
 \end{eqnarray}
Combining (\ref{ss}) and (\ref{sss}) leads to
\begin{eqnarray}\label{ll}
\frac{1}{3\tilde \rho^2}|||s^k|||^2 \le \frac{1}{\sigma} (\mathcal E_{k-1}-\mathcal E_{k}).
\end{eqnarray}

Since $\mathcal R$ satisfies the \L ojasiewicz property at $v^{\infty}$,
$v^k\to v^{\infty}$,  $\mathcal R_k$ monotonically decreasing, and $\mathcal R_k\to \mathcal R_{\infty}$ as $k\to \infty$, 
then there exist an $K\ge k_0$, $\epsilon>0$, $\theta \in [0,1)$,  and $C_L>0$ such that   for all $k\ge K$
${\rm dist}(v^k,v^{\infty})<\epsilon$ and it holds
$|\mathcal R_k-\mathcal R_{\infty}|^{\theta}\le C_L\; {\rm dist}(0,\partial \mathcal R_k)
$.
Hence we have 
$\mathcal E_k^{\theta}\le C_{L}|||s^k|||$,
and therefore 
\begin{eqnarray*}
\mathcal E_k^{2\theta}\le C_L^2 |||s^k|||^2, \quad \forall k\ge K
\end{eqnarray*}
for some $s^k\in\partial\mathcal R_k$.
This, together with  (\ref{ll}) yields 
\[
\frac{\sigma}{3C_L^2\tilde \rho^2} \mathcal E_k^{2\theta} \le \mathcal E_{k-1}-\mathcal E_{k}.
\]
Setting $\bar\alpha=\sigma/{3C_L^2\tilde \rho^2}>0$, we obtain (\ref{Eformula}). 

({\bf i}) Let $\theta=0$. If $\mathcal E_k>0$ for $k\ge K$ we would have $\bar\alpha \le \mathcal E_{k-1}-\mathcal E_k$.
As $k$ approaches  infinity, the right hand side approaches  zero, then $0<\bar\alpha\le 0$, which leads to a contradiction. 
Hence  $\mathcal E_k$ must be equal to zero for  $k\ge K$.
Hence, there is a $\tilde k\le K$ such that $\mathcal E_k=0$ for all $k\ge\tilde k$.

({\bf ii}) If $\theta\in (0,\frac 12]$, then $2\theta-1<0$.  Let $k\ge K+1$ be fixed.
$\{\mathcal E_i\}_{i\ge K}$ is monotonically decreasing, $\mathcal E_i\le \mathcal E_K$ for $i= K+1, K+2, \dots, k$ and 
 \[
\bar\alpha \mathcal E_K^{2\theta-1} \mathcal E_k\le  \mathcal E_{k-1}-\mathcal E_{k},\quad \forall k>K+1.
 \]
 We rearrange this to obtain
   \[
\mathcal E_k \le 
\frac{ \mathcal E_{k-1}}{1+\bar\alpha \mathcal E_K^{2\theta-1}}
\le\frac{ \mathcal E_{k-2}}{(1+\bar\alpha \mathcal E_K^{2\theta-1})^2}
\le \dots\le
 \frac{\mathcal E_{K}}{(1+\bar\alpha \mathcal E_{k_0}^{2\theta-1})^{k-K}}. 
 \]
Hence
\[
\mathcal E_k
 \le  \frac{ \max\{\mathcal E_i:0\le i\le K\}}{(1+\bar\alpha \mathcal E_K^{2\theta-1})^{k-K}},
 \quad k\ge K.
\]
 
({\bf iii}) Let $\theta\in(1/2,1)$. Rearrange (\ref{Eformula}) to obtain 
 \begin{eqnarray}\label{EformulaR}
\bar\alpha \le (\mathcal E_{k-1}-\mathcal E_{k}) \mathcal E_{k}^{-2\theta},\quad \forall k\ge K
\end{eqnarray}
We let  $h:\mathbb R_+\to \mathbb R$ defined by $h(s)=s^{-2\theta}$ for $s\in\mathbb R_+$. 
Clearly, $h$ is monotonically decreasing  ($h'(s)=-2\theta s^{-(1+2\theta)}<0$). Since $\mathcal E_k\le \mathcal E_{k-1}$ for all $k\ge K$ then  $h(\mathcal E_{k-1}) \le h(\mathcal E_{k})$ for all $k\ge K$ as $\mathcal E_k$ is monotonically decreasing.
We consider two cases. 
First, let $r_0\in (1,+\infty)$ such that 
\[
h(\mathcal E_{k}) \le r_0 h(\mathcal E_{k-1}),\quad \forall k>K.
\] 
Hence, by (\ref{EformulaR}) we obtain
 \begin{eqnarray*}
\bar\alpha \le r_0 (\mathcal E_{k-1}-\mathcal E_{k}) h(\mathcal E_{k-1}) 
  &\le& r_0 h(\mathcal E_{k-1})  \int^{\mathcal E_{k-1}}_{\mathcal E_{k}} 1ds \\
  &\le & r_0 \int^{\mathcal E_{k-1}}_{\mathcal E_{k}} h(s) ds\\
  &= & r_0 \int^{\mathcal E_{k-1}}_{\mathcal E_{k}} s^{-2\theta} ds\\
  &=&   \frac{r_0}{1-2\theta}[\mathcal E_{k-1} ^{1-2\theta} - \mathcal E_{k}^{1-2\theta}].
 \end{eqnarray*}
Note that $1-2\theta<0$, so rearrange to get  
  \[
  0<  \frac{\bar\alpha(2\theta-1)}{r_0} \le  \mathcal E_{k}^{1-2\theta}-\mathcal E_{k-1} ^{1-2\theta}. 
  \]
 Setting $\hat\mu=\frac{\bar\alpha(2\theta-1)}{r_0}>0$  and $\nu:={1-2\theta}<0 $ one then can obtain
  \begin{eqnarray}\label{F1}
  0<\hat\mu<\mathcal E_{k}^{\nu}-\mathcal E_{k-1}^{\nu},\quad \forall k> K.
  \end{eqnarray}

 Next, we consider the case where $h(\mathcal E_{k}) \ge r_0 h(\mathcal E_{k-1})$,. 
  This yields $\mathcal E_{k}^{-2\theta} \ge r_0 \mathcal E_{k-1}^{-2\theta}$. 
Rearranging this gives
$r_0^{-1}\mathcal E_{k-1}^{2\theta} \ge \mathcal E_{k}^{2\theta}$, which by raising both sides 
to the power $1/2\theta$ and setting $q:=r_0^{-\frac{1}{2\theta}}\in(0,1)$
leads to 
   \[
q\mathcal E_{k-1}\ge \mathcal E_{k}.
  \]
Since $\nu=1-2\theta<0$, $q^{\nu} \mathcal E_{k-1}^{\nu}\le \mathcal E_{k}^{\nu}$, 
which follows that
\[
   (q^{\nu}-1) \mathcal E_{k-1}^{\nu}\le \mathcal E_{k}^{\nu} -\mathcal E_{k-1}^{\nu}.
\]
By the fact that $q^{\nu}-1>0$ and $\mathcal E_p\to 0^+$ as  $p\to\infty$, there exists $\bar\mu$ such that 
  $(q^{\nu}-1) \mathcal E_{k-1}^{\nu}> \bar \mu$ for all $k> K$. Therefore we obtain
  \begin{eqnarray}\label{F2}
0<\bar \mu \le \mathcal E_{k}^{\nu} -\mathcal E_{k-1}^{\nu}.
   \end{eqnarray}

Choose $\mu=\min\{\hat \mu,\bar \mu\}>0$, one can combine (\ref{F1}) and (\ref{F2}) to obtain  
\[
0<\mu \le \mathcal E_{k}^{\nu} -\mathcal E_{k-1}^{\nu},\quad \forall k> K.
\]
Summing this inequality from $K+1$ to some $k\ge K+1$ gives
\[
   \mu  (k-K)+ \mathcal E_K^{\nu} \le \mathcal E_{k}^{\nu} .
\]
Hence   
   \[
\displaystyle{ \mathcal E_{k}\le ( \mu  (k-K)+ \mathcal E_K^{\nu} )^{1/\nu}
  =( \mu  (k-K)+ \mathcal E_K^{1-2\theta} )^{1/ (1-2\theta)}.}
   \]
   This concludes the proof. $\blacksquare$
   \end{proof}

 \begin{theorem}  \label{seqrate}
{\bf (Convergence rate of sequence)}
Suppose that the Assumptions \ref{assumptionA} and \ref{assumptionB} (ii) hold,
and $u^{\infty}:=(x^{\infty}, y^{\infty}, z^{\infty})$ is the unique limit point of the sequence 
$\{(x^k,y^k,z^k)\}_{k\ge 0}$ generated by the PL-ADMM algorithm. 
\begin{itemize}
\item [(a)]
If $\mathcal R$ satisfies the {K\L} property at $v^{\infty}:=(x^{\infty}, y^{\infty}, z^{\infty},y^{\infty}, z^{\infty})$
then there exists a $K\ge 1$ such that for all $k\ge K$ we have
\begin{eqnarray}\label{pp}
|||u^k-u^{\infty}|||\le C \max\{ \psi(\mathcal E_{k}),  \sqrt{\mathcal E_{k-1}} \},
\end{eqnarray}
where $C>0$ constant, $\mathcal E_k:=\mathcal R_k-\mathcal R_{\infty}$,
$\mathcal R_{\infty}:=\mathcal R(v^{\infty})=\lim_{k\to\infty} \mathcal R_k$,
$\psi\in\Psi_{\eta}$ with $\eta>0$ denotes a desingularizing function. 
\item [(b)] Moreover, if \[
\psi:[0,\eta)\to[0,+\infty),\;\; \psi(s)=s^{1-\theta},\quad{\rm where}\quad \theta\in[0,1)
\]
then the following rates hold
\begin{itemize}
\item [(i)] If $\theta=0$,  then $u^k$ converges to $u^{\infty}$ in a finite number of iterations. 
\item [(ii)] If $\theta\in (0,1/2)$, then for all $k\ge K$ it holds
 \[
|||u^k-u^{\infty}||| \le 
\frac{ 
\max\{\sqrt{\mathcal E_i}:1\le i\le K\}
}{\sqrt{(1+\bar\alpha \mathcal E_K^{2\theta-1})^{k-K+1}}},
\]   
where $\tilde\alpha =\sigma/3\tilde\rho^2$.
\item [(iii)] if $\theta\in(1/2,1)$ then
  \[
  |||u^k-u^{\infty}||| 
\le\Big(\frac{1}{\mu (k-K+1)+{\mathcal E_K}^{1-2\theta}} \Big)^{\frac{1-\theta}{2\theta-1}},\quad \forall k\ge K. 
  \]     
\end{itemize}
\end{itemize}
\end{theorem}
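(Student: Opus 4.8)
The plan is to convert the finite-length argument of Theorem~\ref{conv} into a \emph{quantitative} tail estimate and then substitute into it the rates for $\mathcal{E}_k$ established in Theorem~\ref{FR}. Since the sequence converges, the limit-point set $\Omega=\{v^{\infty}\}$ (the remark following Theorem~\ref{conv}), so the {K\L} hypothesis at $v^{\infty}$ supplies a single desingularizing function $\psi\in\Psi_{\eta}$. First I would revisit the proof of Theorem~\ref{conv}: summing (\ref{import}) from an arbitrary index $\underline k$ that is large enough for the {K\L} inequality to apply (i.e.\ $\underline k\ge\tilde k$) up to $K$ and letting $K\to\infty$ yields
\begin{eqnarray*}
\sum_{j\ge \underline k}\Big(\|\Delta x^{j+1}\|+\|\Delta y^{j+1}\|+\|\Delta z^{j+1}\|\Big)
&\le& \frac{\sqrt{3}\gamma}{2\sigma\delta_0}\,\psi(\mathcal{E}_{\underline k})\\
&&+\frac{\sqrt{3}\tilde\rho}{2\gamma\delta_0}\Big(\|\Delta x^{\underline k}\|+\|\Delta y^{\underline k}\|+\|\Delta z^{\underline k}\|\Big),
\end{eqnarray*}
which is exactly the estimate already obtained inside that proof, merely read with the starting index kept free rather than discarded.

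Next, since $u^k\to u^{\infty}$ by Theorem~\ref{conv}, telescoping gives $u^{\infty}-u^k=\sum_{j\ge k}\Delta u^{j+1}$, and hence, by the triangle inequality together with the norm comparison $|||v|||\le\sum_i\|v_i\|$ from the preliminaries, $|||u^k-u^{\infty}|||\le\sum_{j\ge k}(\|\Delta x^{j+1}\|+\|\Delta y^{j+1}\|+\|\Delta z^{j+1}\|)$. Taking $\underline k=k$ (for $k\ge\tilde k$) in the displayed tail estimate bounds this by $\frac{\sqrt{3}\gamma}{2\sigma\delta_0}\psi(\mathcal{E}_k)+\frac{\sqrt{3}\tilde\rho}{2\gamma\delta_0}(\|\Delta x^k\|+\|\Delta y^k\|+\|\Delta z^k\|)$. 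To control the last group I would invoke the sufficient-decrease inequality (\ref{eq:mdr11-r}): $\|\Delta x^k\|^2+\|\Delta y^k\|^2+\|\Delta z^k\|^2\le\sigma^{-1}(\mathcal{E}_{k-1}-\mathcal{E}_k)\le\sigma^{-1}\mathcal{E}_{k-1}$, so $\|\Delta x^k\|+\|\Delta y^k\|+\|\Delta z^k\|\le\sqrt{3/\sigma}\,\sqrt{\mathcal{E}_{k-1}}$. Combining the two contributions and bounding a sum of two terms by twice their maximum gives $|||u^k-u^{\infty}|||\le C\max\{\psi(\mathcal{E}_k),\sqrt{\mathcal{E}_{k-1}}\}$ for all $k\ge K:=\tilde k$, which is (\ref{pp}); here $C$ is the explicit constant built from $\gamma,\sigma,\delta_0,\tilde\rho$.

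For the explicit rates I would specialize to $\psi(s)=s^{1-\theta}$ and call on Theorem~\ref{FR}. If $\theta=0$, part~(a) gives $\mathcal{E}_k=0$ for all large $k$, whence $|||u^k-u^{\infty}|||=0$, i.e.\ finite termination. If $\theta\in(0,1/2)$, part~(b) gives the geometric decay of $\mathcal{E}_k$; since $1-\theta>1/2$, the term $\sqrt{\mathcal{E}_{k-1}}$ eventually dominates $\mathcal{E}_k^{1-\theta}$, and substituting the geometric bound on $\mathcal{E}_{k-1}$ produces the estimate in (ii). If $\theta\in(1/2,1)$, part~(c) gives the sublinear decay $\mathcal{E}_k\le(\mu(k-K+1)+\mathcal{E}_K^{1-2\theta})^{1/(1-2\theta)}$; comparing the exponents $(1-\theta)/|1-2\theta|$ and $1/(2|1-2\theta|)$ and using $1-\theta<1/2$ shows that now $\psi(\mathcal{E}_k)=\mathcal{E}_k^{1-\theta}$ is the dominant term, and substituting gives the rate in (iii).

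The computations are mostly bookkeeping once Theorems~\ref{conv} and \ref{FR} are in hand. The point needing care is the very first step: the ``finite length'' conclusion of Theorem~\ref{conv} must be re-interpreted as a tail bound with a free starting index, so that its right-hand side is $\psi(\mathcal{E}_k)$ and $\sqrt{\mathcal{E}_{k-1}}$ evaluated at the current $k$; after that, the only subtlety is keeping the index shifts ($\mathcal{E}_{k-1}$ versus $\mathcal{E}_k$, and $k-K$ versus $k-K+1$) consistent when deciding which term of the maximum controls the rate in each of the three regimes.
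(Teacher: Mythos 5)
Your proposal is correct and follows essentially the same route as the paper: re-reading the finite-length estimate of Theorem~\ref{conv} as a tail bound starting at the current index $k$, controlling the leftover increment $\|\Delta x^{k}\|+\|\Delta y^{k}\|+\|\Delta z^{k}\|$ by $\sqrt{3/\sigma}\,\sqrt{\mathcal E_{k-1}}$ via the sufficient-decrease inequality, combining into the $\max\{\psi(\mathcal E_k),\sqrt{\mathcal E_{k-1}}\}$ bound, and then substituting the rates from Theorem~\ref{FR} with the same identification of the dominant term in each regime of $\theta$. No gaps.
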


\begin{proof}
Part (a). Let $k_0\ge 1$ such that $\{\mathcal E_k\}_{k\ge k_0}$ is monotonically decreasing.
By (\ref{eq:mdr11-r}) 
for all   $k\ge k_0+1$  it holds
\begin{eqnarray}\label{sasa}
\|\Delta x^k\|+\|\Delta y^k\|+\|\Delta z^k\|\le
\frac{\sqrt{3}}{\sqrt{\sigma}} \sqrt{\mathcal E_{k-1}-\mathcal E_{k}}
 \le  \frac{\sqrt{3}}{\sqrt{\sigma}} \sqrt{\mathcal E_{k-1}}.
\end{eqnarray}
By this, and the fact that $\mathcal R_k$ converges to $\mathcal R_{\infty}$, $\lim_{k\to\infty} v^k=v^{\infty}$,
and $\mathcal R$ satisfies the {K\L} property at $v^{\infty}$, there exists $\epsilon>0$, $\eta>0$ 
and $\psi\in\Psi_{\eta}$, and $K\ge k_0+1$ such that for all $k\ge K$, we have
${\rm dist}(v^k, v^{\infty}) <\epsilon$ and $\mathcal R_{\infty} <\mathcal R_k< \mathcal R_{\infty}+\eta,
$ and the following {K\L} property holds
\begin{eqnarray}\label{KLlast}
\psi'\big(\mathcal E_k\big)\cdot {\rm dist}\big(0, \partial \mathcal R_k\big)\ge 1.
\end{eqnarray}
By the concavity of $\psi$ and (\ref{s1})  we then obtain
\[
|||\Delta u^{k+1}|||^2 \le \frac{1}{\sigma} \Big(\psi(\mathcal E_{k}) -\psi(\mathcal E_{k+1})\Big) 
 \cdot {\rm dist}(0, \partial \mathcal R_k ).
\]
By the arithmetic mean-geometric mean inequality for any $\gamma>0$ we  have
\begin{eqnarray*}
\|\Delta x^{k+1}\|
+\|\Delta y^{k+1}\|
+\|\Delta z^{k+1}\|
\le \frac{\sqrt{3}\gamma}{2\sigma}  \Big(\psi(\mathcal E_{k}) -\psi(\mathcal E_{k+1})\Big) 
+\frac{\sqrt{3}}{2\gamma}{\rm dist}(0, \partial \mathcal R_k ).
\end{eqnarray*}
Using Lemma \ref{lma-D} gives
\begin{eqnarray}\label{import}
\|\Delta x^{k+1}\|
+\|\Delta y^{k+1}\|
+\|\Delta z^{k+1}\|
\le 
\frac{\sqrt{3}\gamma}{2\sigma}  \psi(\mathcal E_{k}) 
+
\frac{\sqrt{3}\tilde \rho}{2\gamma}\Big(\|\Delta x^{k}\|
+\|\Delta y^{k}\|
+\|\Delta z^{k}\|\Big).
\end{eqnarray}
Let  $\gamma>0$ large enough such that $1>\sqrt{3}\tilde \rho/2\gamma$. 
Denote $\delta_0:=1-\frac{\sqrt{3}\tilde \rho}{2\gamma}$, then sum up the latter inequality over $k\ge K$
to get %
\begin{eqnarray*}
&\sum_{k\ge K}\|\Delta x^{k+1}\|
+\|\Delta y^{k+1}\|
+\|\Delta z^{k+1}\|
\le 
\frac{\sqrt{3}\gamma}{2\sigma\delta_0} 
\psi(\mathcal E_K)+\frac{\sqrt{3}\tilde \rho}{2\gamma\delta_0}
\Big(\|\Delta x^K\|
+\|\Delta y^K\|
+\|\Delta z^K\|\Big).
\end{eqnarray*}\label{finitelength}
Hence by the triangle inequality for any $k\ge K$ it holds 
\begin{eqnarray*}
|||u^k-u^{\infty}|||&\le& \sum_{p\ge k}|||\Delta u^{p+1}|||\\
&\le &\sum_{p\ge k} \|\Delta x^{p+1}\|
+\|\Delta y^{p+1}\|
+\|\Delta z^{p+1}\|\\
&\le&
 \frac{\sqrt{3}\gamma}{2\sigma\delta_0} \psi(\mathcal E_k)
+\frac{\sqrt{3}\tilde \rho}{2\gamma\delta_0}
\Big(\|\Delta x^{k}\|
+\|\Delta y^{k}\|
+\|\Delta z^{k}\|\Big). 
\end{eqnarray*}
Use  (\ref{sasa}) to get 
\begin{eqnarray*}
|||u^k-u^{\infty}|||&\le& 
\frac{\sqrt{3}\gamma}{2\sigma\delta_0} \psi(\mathcal E_k)
+\frac{3\tilde \rho}{2\gamma\delta_0\sqrt{\sigma}}
\sqrt{\mathcal E_{k-1}}\\
&\le&
C\max\{ 
\psi(\mathcal E_{k}),  \sqrt{\mathcal{E}_{k-1}} \},
\end{eqnarray*}
where
\[
C=\max\Big\{
\frac{\sqrt{3}\gamma}{2\sigma\delta_0},
\frac{3\tilde \rho}{2\gamma\delta_0\sqrt{\sigma}}\Big\}.
\]

Part (b). By the concavity of $\psi$ it follows that 
\begin{eqnarray}\label{SR}
|||u^k-u^{\infty}|||
\le 
C\max\{ 
\mathcal{E}_{k}^{1-\theta},  \sqrt{\mathcal{E}_{k-1}} \},\quad k\ge K. 
\end{eqnarray}

We let now $\theta\in [0,1)$ and $\psi(s)=s^{1-\theta}$, then 
$\psi'(s)=(1-\theta)s^{-\theta}$. Then  (\ref{KLlast}) yields
\[
{\mathcal E_k}^{\theta}\le {\rm dist}\big(0, \partial \mathcal R_k\big),\quad \forall k\ge K.
\]
This implies that $\mathcal R_k$ satisfies the {\L}ojasiewics (\ref{eq:L}) at 
$v^{\infty}$ for all $k\ge K$ with $C_L=1$. 

{\bf (i)} If $\theta=0$, then $\mathcal E_k\to 0$ in a finite numbers of iterations.
Hence by (\ref{SR}) $u^k$ must converge to $u^{\infty}$ in a finite numbers of iterations.

{\bf (ii) } If $\theta\in (0,1/2)$, then $\max\{\mathcal {E}_{k}^{1-\theta},  \sqrt{\mathcal E_{k-1}} \} =\sqrt{\mathcal E_{k-1}}$. By Theorem \ref{FR}(ii) 
\[
|||u^k-u^{\infty}||| \le 
\frac{ 
\max\{\sqrt{\mathcal E_i}:1\le i\le K\}
}{\sqrt{(1+\bar\alpha \mathcal E_K^{2\theta-1})^{k-K+1} }},\quad \forall k\ge K
\]   
where $\bar\alpha =\sigma/3\tilde\rho^2$.

{\bf (iii) }  If $\theta\in (1/2,1)$, then  
$\max\{\mathcal E_{k-1}^{1-\theta},  \sqrt{\mathcal E_{k-1}} \} =\mathcal{E}_{k-1}^{1-\theta}$.
By Theorem \ref{FR}(iii) we have
\[
|||u^k-u^{\infty}||| 
\le\Big(\frac{1}{\mu (k-K-1)+{\mathcal E_K}^{1-2\theta}} \Big)^{\frac{1-\theta}{2\theta-1}},\quad \forall k\ge K.
 \]     
This completes the proof. $\blacksquare$  
 \end{proof}

 \section{Concluding Remarks}\label{sec6}
In this paper, we considered the variable metric proximal linearized ADMM method
(\ref{Alg1}) and established its  convergence and convergence ratea. The algorithm solves 
a broad class of linearly constrained nonconvex and nonsmooth minimization problems of the 
form (\ref{OP}).
We proved that the convergence by showing that the PL-ADMM sequence has a finite length and it is Cauchy. 
Under the powerful Kurdyka-{\L}ojasiewicz ({K\L}) property, we established the convergence rates
 for the values and the iterates, and 
we showed that various values of K{\L}-exponent associated with the objective function 
can raise the PL-ADMM with three different convergence rates.
 More precisely, we showed that if the  ({K\L}) exponent $\theta=0$, 
 the sequence generated by LP-ADMM converges in a finite numbers of iterations.
 If $\theta\in(0,1/2]$, then the sequential rate of convergence is $cQ^{k}$ where
 $c>0$, $Q\in(0,1)$, and $k\in\mathbb N$  is the iteration number.
If  $\theta\in(1/2,1]$, then the $\mathcal O(1/k^{r})$ rate where $r=(1-\theta)/(2\theta-1)$
 is achieved.

\bibliographystyle{siam}
\bibliography{library.bib}

\end{document}